\newtheorem{teo}{Theorem}[section]
\newtheorem{lm}[teo]{Lemma}
\newtheorem{prop}[teo]{Proposition}
\newtheorem{coro}[teo]{Corollary}
\theoremstyle{definition}
\newtheorem{oss}[teo]{Remark}
\newtheorem*{ack}{Acknowledgments}
\title[Orthotropic with nonstandard growth]{Lipschitz regularity for\\ orthotropic functionals\\ with nonstandard growth conditions}
\author[Bousquet]{Pierre Bousquet}
\author[Brasco]{Lorenzo Brasco}
\address[P. Bousquet]{Institut de Math\'ematiques de Toulouse, CNRS UMR 5219
\newline\indent Universit\'e de Toulouse
\newline\indent F-31062 Toulouse Cedex 9, France.}
\email{pierre.bousquet@math.univ-toulouse.fr}
\address[L.\ Brasco]{Dipartimento di Matematica e Informatica
	\newline\indent
	Universit\`a degli Studi di Ferrara
	\newline\indent
	Via Machiavelli 35, 44121 Ferrara, Italy}
\email{lorenzo.brasco@unife.it}
\subjclass[2010]{35J70, 35B65, 49K20}
\keywords{Nonstandard growth conditions, degenerate elliptic equations, Lipschitz regularity, orthotropic problems}
\numberwithin{equation}{section}
\begin{document}

\begin{abstract}
We consider a model convex functional with orthotropic structure and super-quadratic nonstandard growth conditions. We prove that bounded local minimizers are locally Lipschitz, with no restrictions on the ratio between the highest and the lowest growth rate.
\end{abstract}

\maketitle

\begin{center}
\begin{minipage}{11cm}
\small
\tableofcontents
\end{minipage}
\end{center}

\section{Introduction}

\subsection{Overview}

We pursue our study of the gradient regularity for local minimizers of functionals from the Calculus of Variations, having a structure that we called {\it orthotropic}. We refer to our previous contributions \cite{BB, BBJ, BBLV, BC} and \cite{BLPV}, for an introduction to the subject.
\par
More precisely, we want to expand the investigation carried on in \cite{BLPV}, by studying functionals of the form
\[
\int f(\nabla u)\,dx,\qquad f:\mathbb{R}^N\to \mathbb{R} \mbox{ convex},
\]
which couple the following two features
\[
\mbox{\it orthotropic structure} \qquad \mbox{ and }\qquad \mbox{\it nonstandard growth conditions}.
\]
The first one means that we require
\[
f(z)=\sum_{i=1}^N f_i(z_i),\qquad \mbox{ with } f_i:\mathbb{R}\to\mathbb{R} \mbox{ convex},
\]
while the second one means that 
\[
|z|^p-1\lesssim f(z)\lesssim |z|^q+1,\qquad \mbox{ with } 1<p<q.
\]
As we will see, these two features give rise to one of the most challenging type of functionals, at least if one is interested in higher order regularity of local minimizers, i.e. regularity of their gradients.
\par
Let us be more specific on the type of functionals we want to study.
We take a vector $\mathbf{p}=(p_1,\dots,p_N)$ with $2\le p_1\le \dots\le p_N$. Let $\Omega\subset\mathbb{R}^N$ be an open set. For every $u\in W^{1,\mathbf{p}}_{\rm loc}(\Omega)$ and every $\Omega'\Subset\Omega$, we consider the {\it orthotropic functional with nonstandard growth}
\[
\mathfrak{F}_{\mathbf{p}}(u,\Omega')=\sum_{i=1}^N \frac{1}{p_i}\,\int_{\Omega'} |u_{x_i}|^{p_i}\,dx.
\]
We say that $u\in W^{1,\mathbf{p}}_{\rm loc}(\Omega)$ is a {\it local minimizer of $\mathfrak{F}_p$} if
\[
\mathfrak{F}_{\mathbf{p}}(u,\Omega')\le \mathfrak{F}_{\mathbf{p}}(v,\Omega'),\qquad \mbox{ for every } v-u\in W^{1,\mathbf{p}}_0(\Omega')\quad \mbox{ and every }\Omega'\Subset\Omega.
\] 
Here $W^{1,\mathbf{p}}$ and $W^{1,\mathbf{p}}_0$ are the classical anisotropic Sobolev spaces, defined for an open set $E\subset\mathbb{R}^N$ by
\[
W^{1,\mathbf{p}}(E)=\{u\in L^1(E)\, :\, u_{x_i}\in L^{p_i}(E),\, i=1,\dots,N\},
\]
and
\[
W^{1,\mathbf{p}}_0(E)=W^{1,\mathbf{p}}(E)\cap W^{1,1}_0(E).
\]
It is easy to see that a local minimizer of $\mathfrak{F}_p$ is a local weak solution of the following quasilinear equation with orthotropic structure
\begin{equation}
\label{orthopi}
\sum_{i=1}^N \Big(|u_{x_i}|^{p_i-2}\,u_{x_i}\Big)_{x_i}=0.
\end{equation}
It is well-known that local minimizers of functionals like $\mathfrak{F}_\mathbf{p}$ above can be {\it unbounded} if the ratio
\[
\frac{p_N}{p_1},
\]
is too large, see the celebrated counter-examples by Giaquinta \cite{Gia} and Marcellini \cite{Ma91, Ma89, MaContro} (see also Hong's paper \cite{Ho}). In Western countries, the regularity theory for {\it non degenerate} functionals with nonstandard growth was initiated in the seminal papers \cite{Ma91,Ma89} by Marcellini. For {\it strongly degenerate functionals}, including the orthotropic functional with nonstandard growth $\mathfrak{F}_\mathbf{p}$ introduced above, the question has been addressed in the Russian litterature, see for example the papers \cite{Ko} by Kolod\={\i}\u{\i}, \cite{Kor} by Koralev and \cite{UU} by Uralt'seva and Urdaletova. 
\par
However, in spite of a large number of papers and contributions on the subject, a satisfactory gradient regularity theory for these problems is still missing. Some higher integrability results for the gradient have been obtained for example in  \cite[Theorem 2.1]{ELM1} and \cite[Theorem 5]{ELM}. In any case, we point out that even the case of basic regularity (i.e. $C^{0,\alpha}$ estimates and Harnack inequalities) is still not completely well-understood, we refer to the recent paper \cite{BDP} and the references therein.

\subsection{Main result}
In this paper, we are going to prove that {\it bounded} local minimizers of our orthotropic functional $\mathfrak{F}_\mathbf{p}$ are locally Lipschitz continuous. We point out that {\bf no upper bounds on the ratio}
\[
\frac{p_N}{p_1},
\]
{\bf are needed} for the result to hold.
\begin{teo}
\label{teo:lipschitz}
Let  $\mathbf{p}=(p_1,\dots,p_N)$ be such that $2\le p_1\le \dots\le p_N$.
Let $U\in W^{1,\mathbf{p}}_{\rm loc}(\Omega)$ be a local minimizer of $\mathfrak{F}_{\mathbf{p}}$ such that 
\[
U \in L^\infty_{\rm loc}(\Omega).
\]
Then $\nabla U \in L^\infty_{\rm loc}(\Omega)$.
\end{teo}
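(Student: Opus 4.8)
The plan is to establish an a priori Lipschitz estimate for smooth (or suitably regularized) solutions, with a constant depending only on $N$, $\mathbf{p}$, the domain, and $\|U\|_{L^\infty}$, and then to recover the general case by an approximation scheme. Concretely, I would first regularize the functional, replacing each $|t|^{p_i}/p_i$ by $(\mu^2+t^2)^{p_i/2}/p_i$ with $\mu\to 0$, or by adding a small uniformly elliptic term $\varepsilon|z|^2$, so that solutions $u_\varepsilon$ of the regularized problems are smooth by standard theory; the key point is to derive estimates uniform in the regularization parameter. One then passes to the limit using the local minimality (which guarantees convergence of $u_\varepsilon$ to $U$ in the appropriate anisotropic topology and that the gradient bound is inherited). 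The bulk of the work is therefore the a priori estimate.

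**For the a priori estimate**, the natural route is the Bernstein technique combined with a Moser-type iteration, performed carefully to exploit the orthotropic structure. Differentiating the equation \eqref{orthopi} in the direction $x_j$, one obtains, for $v=u_{x_j}$, an equation of the form $\sum_i \big(|u_{x_i}|^{p_i-2}v_{x_i}\big)_{x_i} = 0$ (schematically), and testing with powers of $v$ times a cutoff yields Caccioppoli-type inequalities for $|u_{x_j}|$. The orthotropic difficulty is that the "coefficient" in the $i$-th direction degenerates exactly where $u_{x_i}=0$, so there is no ellipticity transverse to each coordinate hyperplane; one cannot control $u_{x_j,x_i}$ for $i\ne j$ by the energy in a naive way. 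The trick, as in the earlier orthotropic papers, is to work with the quantities $|u_{x_i}|^{(p_i-2+s)/2}\,u_{x_i,x_i}$ and to sum over $i$, using integration by parts to transfer a second derivative in one direction onto a different component: the crucial algebraic identity is that $\sum_{i,j}\int \big(|u_{x_i}|^{p_i-2}u_{x_i,x_j}\big)\,u_{x_i,x_j}\,\varphi$ and the mixed terms arising from $\partial_j$ of the equation combine with a sign. Here the $L^\infty$ bound on $U$ enters decisively: it is used to absorb the "bad" zero-order terms produced by the cutoff, via a Sobolev/Poincaré inequality in the anisotropic setting, or more precisely by an interpolation that converts $\|u\|_{L^\infty}$ times a gradient term into something controllable.

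**The iteration step**: from the Caccioppoli inequalities one builds a scheme for the sequence of exponents $s_k$ going to $+\infty$, where at each step one gains integrability of $u_{x_i}$; the anisotropic Sobolev embedding produces a gain governed by the harmonic-mean exponent $\bar p$ (with $\sum 1/p_i = N/\bar p$), and one must check that the feedback does not blow up. Because there is no restriction on $p_N/p_1$, the embedding exponent $\bar p$ may be very different from the top exponent, and the estimate must be arranged so that the constants in the iteration accumulate to something finite — this is where the $L^\infty$ bound substitutes for the usual requirement $p_N < \bar p^* $ or similar. The iteration terminates with $u_{x_j}\in L^r_{\rm loc}$ for all finite $r$ with quantitative bounds, and a final De Giorgi / Moser passage to $L^\infty$ gives the Lipschitz estimate.

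**Main obstacle.** I expect the hardest step to be closing the Caccioppoli estimates in a way that is uniform in the regularization $\varepsilon$ (or $\mu$) and that genuinely uses only boundedness of $U$, not any smallness of $p_N/p_1$: one must handle the complete degeneracy of the orthotropic operator (zero ellipticity on each $\{u_{x_i}=0\}$) while still extracting enough second-derivative information to run an iteration, and the bookkeeping of the exponents in the anisotropic Sobolev inequality — making sure the product of the iteration constants converges — is delicate precisely because the spread of the $p_i$ is unbounded. A secondary technical point is that the natural test functions are not admissible without a further regularization or truncation (e.g. $\min\{|u_{x_j}|,M\}$), so a second limiting argument (in $M$) is nested inside the first.
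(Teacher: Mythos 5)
Your high-level architecture---regularize, prove a priori estimates uniform in $\varepsilon$, pass to the limit---matches the paper, and you correctly identify that the $L^\infty$ bound on $U$ is the lever that avoids restrictions on $p_N/p_1$. But there is a genuine structural gap in the a priori estimate: you treat it as a \emph{single} Moser-type iteration, with $\|u\|_{L^\infty}$ used to absorb bad terms along the way, whereas the paper is forced to split the argument into two logically separate pieces, neither of which does the other's job.

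The first piece is an $L^\infty$--$L^\gamma$ estimate obtained via a Moser iteration on the ``weird'' Caccioppoli inequalities (test functions of the form $\Phi(u_{x_j})\Psi(u_{x_k})$ mixing two coordinate directions, together with the standard Sobolev embedding $W^{1,2}_0\hookrightarrow L^{2^*}$ applied to power functions $|u_{x_k}|^{q+(p_k-2)/2}u_{x_k}$---not the anisotropic Sobolev embedding with harmonic-mean exponent that you invoke). This iteration does \emph{not} use $\|u\|_{L^\infty}$ at all. Its outcome is
\[
\|\nabla u\|_{L^\infty(B_r)}\le C\,(R-r)^{-\beta}\Big(\int_{B_R}|\nabla u|^\gamma\,dx+1\Big)^{\Theta/\gamma},\qquad \gamma\ge p_N+2,\ \Theta>1,
\]
and the crucial point, which your sketch misses, is that the exponent $\Theta$ is \emph{strictly larger than} $1$: in the nonstandard growth case the ``filling-the-gaps'' interpolation needed to match the exponents $\gamma_j=p_N+2^{j+2}-2$ with $\tfrac{2^*}{2}(\gamma_j+p_1-p_N)$ produces factors $(1+\varepsilon_j)$ whose infinite product $\Theta$ is finite but exceeds $1$. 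This inhomogeneity kills the interpolation trick that works when $p_1=\dots=p_N$, so one cannot reduce $\gamma$ down to the natural growth exponent and a separate higher-integrability input is unavoidable. With no upper bound on $p_N/p_1$, the starting exponent $\gamma$ is effectively uncontrolled, so one must in fact know $\nabla u\in L^q_{\mathrm{loc}}$ for \emph{every} finite $q$.

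The second piece supplies exactly that, and it is not a Moser scheme at all: starting from $\int|u_{x_k}|^{p_k+2+\alpha}\eta^2=-\int u\,(u_{x_k}|u_{x_k}|^{p_k+\alpha}\eta^2)_{x_k}$ and using $\|u\|_{L^\infty}$ together with a Caccioppoli inequality for $\nabla u$, one bounds a fixed component $u_{x_k}$ \emph{in terms of all the other components} $u_{x_i}$, $i\ne k$, raised to carefully chosen powers. This is the Bildhauer--Fuchs--Zhong mechanism. Because each component's gain feeds into the others, the resulting self-improving scheme is a \emph{multiply recursive} loop over both the component index and the integrability exponent; proving that it does not stall and that each stage terminates after finitely many passes (with no restriction on the $p_i$) requires the monotonicity and stabilization lemmas for the vector sequence $\{(\beta^{(\ell)}_{j-1},\dots,\beta^{(\ell)}_N)\}_\ell$. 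A single Moser iteration in the spirit you describe would not produce these cross-component estimates and would not close: this is precisely the difficulty you flag at the end as ``delicate bookkeeping'', but the resolution is not a more careful Moser iteration, it is a change of scheme.
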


\begin{oss}[$L^\infty$ assumption]
Sharp conditions in order to get $U\in L^\infty_{\rm loc}$ can be found in \cite[Theorem 1]{FS2} by Fusco and Sbordone, see also \cite[Theorem 3.1]{FS} and the papers \cite{CMM2, CMM} by Cupini, Marcellini and Mascolo for the case of more general functionals. Pioneering results are due to Kolod\={\i}\u{\i}, see \cite{Ko}. We also mention the recent paper \cite{DGV} by DiBenedetto, Gianazza and Vespri, where some precise a priori $L^\infty$ estimates on the solution are proved, see Section 6 there. 
\end{oss}

\begin{oss}[Comparison with previous results]
Some particular cases of our Theorem \ref{teo:lipschitz} can be traced back in the literature. We try to give a complete picture of the subject. 
\par
The first one is \cite[Theorem 1]{UU} by Uralt'seva and Urdaletova, which proves local Lipschitz regularity for bounded minimizers, under the restrictions
\[
p_1\ge 4\qquad\mbox{ and }\qquad \frac{p_N}{p_1}<2.
\]
The method of proof of \cite{UU} is completely different from ours and is based on the so-called {\it Bernstein's technique}. We refer to \cite{BBJ} for a detailed description of their proof.
\par
More recently, Theorem \ref{teo:lipschitz} has been proved in the two-dimensional case $N=2$ by the second author in collaboration with Leone, Pisante and Verde, see \cite[Theorem 1.4]{BLPV}. In this case as well, the proof is different from the one we give here, the former being based on a two-dimensional trick introduced in \cite[Theorem A]{BBJ}. Still in dimension $N=2$, Lindqvist and Ricciotti in \cite[Theorem 1.2]{LR} proved $C^1$ regularity for solutions of \eqref{orthopi}, by extending to the case of nonstandard growth conditions a result of the authors, see \cite[Main Theorem]{BB}. 
\par 
In the standard growth case, i.e. when $p_1=\dots=p_N=p$, local Lipschitz regularity has been obtained in \cite[Theorem 1.1]{BBLV}. As we will explain later, the result of \cite{BBLV} is the true ancestor of Theorem \ref{teo:lipschitz}, since the latter is (partly) based on a generalization of the method of proof of the former.
An alternative proof, based on viscosity methods, has been given by Demengel, see \cite{De}.
\par
In \cite{De2}, the same author extended her result to cover the case $p_1<p_N$, under the assumptions
\[
p_N<p_1+1.
\]
The result of \cite[Corollary 1.2]{De2} still requires $p_1\ge 2$ and applies to {\it continuous} local minimizers.
\par
Finally, Lipschitz regularity for solutions of \eqref{orthopi} has been claimed in the abstract of \cite{BoDuMa}. However, a closer inspection of the assumptions of Theorem 1.2 there (see \cite[equation (1.2)]{BoDuMa}) shows that their result does not cover the case of \eqref{orthopi}.
\end{oss}
\begin{oss}[A paper by Lieberman]
The  reader who is familiar with this subject may observe that {\it apparently} our Theorem \ref{teo:lipschitz} is already contained in Lieberman's paper \cite{Li}. Indeed, \cite[Example 1, page 794]{Li} deals with exactly the same result for bounded minimizers, by even dropping the requirement $p_1\ge 2$. However,  Lieberman's proof seems to be affected by a crucial flaw. This is a delicate issue, thus we prefer to explain in a clean way the doubtful point of \cite{Li}. 
\par
We first recall that the proof by Lieberman is inspired by Simon's paper \cite{Si}, dealing with $L^\infty$ gradient estimates for solutions of non-uniformly elliptic equations. One of the crucial tool used by Simon is a generalized version of the Sobolev inequality for functions on manifolds. This is a celebrated result by Michael and Simon himself \cite[Theorem 2.1]{MS}, which in turn generalizes the idea of the cornerstone paper \cite{BDM} by Bombieri, De Giorgi and Miranda on the {\it minimal surface equation}. 
\par
The idea of \cite{Li} is to enlarge the space dimension and identify the set $\Omega$ with the flat $N-$dimensional submanifold $\mathcal{M}:=\Omega\times\{(0,\dots,0)\}$ contained in $\mathbb{R}^{2N-1}$. Then the author introduces:
\begin{itemize} 
\item a suitable gradient operator 
\[
\varphi\mapsto \delta\varphi:=\left(\sum_{j=1}^{2N-1}\gamma^{1,j}\,\varphi_{x_j},\dots,\sum_{j=1}^{2N-1}\gamma^{2N-1,j}\,\varphi_{x_j}\right).
\] 
Here $\gamma=(\gamma^{i,j})$ is a measurable map with values into the set of positive definite symmetric $(2N-1)\times(2N-1)$ matrices;
\vskip.2cm
\item a suitable nonnegative measure $\mu$ defined on sets of the form $\mathcal{M}\cap E$ for all Borel sets $E\subset\mathbb{R}^{2N-1}$;
\vskip.2cm
\item a mean curvature--type operator $H=(H_1,\dots,H_N,H_{N+1},\dots,H_{2N-1})$ defined on \(\mathcal{M}\). 
\end{itemize} 
The key point of \cite[Section 4]{Li} is to apply the Sobolev--type inequality of Michael and Simon in conjunction with a Caccioppoli inequality for the gradient, in order to circumvent the strong degeneracy of the equation \eqref{orthopi}.
However, in order to apply the result by Michael and Simon, some conditions linking the three objects above are needed. Namely, the crucial condition
\begin{equation}
\label{MS}
\int_{\mathcal{M}} \Big[\delta_i\,\varphi+H_i\,\varphi\Big]\,d\mu=0,\qquad \mbox{ for every } \varphi\in C^\infty_0(U),\ \mbox{ for every }i=1,\dots,2N-1,
\end{equation}
must be verified, where $\mathcal{M}\subset U\subset\mathbb{R}^{2N-1}$ is an open set. This is condition (1.2) in \cite{MS}, which is stated to hold true within the framework of Lieberman's paper, see the proof of \cite[Proposition 2.1]{Li}. However, with the definitions of $\mu, \delta$ and $H$ taken in \cite{Li}, one can see that {\it this crucial condition fails to be verified}. Indeed, with the definitions of \cite[Proposition 2.1]{Li}, for $N+1\le i\le 2N-1$, it holds
\[
\gamma^{i,i}=1\quad \mbox{ and }\quad \gamma^{i,j}=0\mbox{ for } j\not=i, \qquad \mbox{ thus }\quad \delta_i\,\varphi=\varphi_{x_i},
\]
and
\[
H_i=0,
\]
while $\mu$ coincides with the $N-$dimensional Lebesgue measure on $\Omega$. Thus condition \eqref{MS} for $N+1\le i\le 2N-1$ becomes
\[
\int_\Omega \varphi_{x_i}(x,0,\dots,0)\,dx=0,\qquad \mbox{ for every } \varphi\in C^\infty_0(U),
\]
which in general {\it is false}. Thus the proof of \cite[Proposition 2.1]{Li} does not appear to be correct, leaving in doubt the whole proof of \cite[Lemma 4.1]{Li}, which contains the $L^\infty$ gradient estimate. 
\end{oss}

\subsection{Structure of the proof}

The proof of Theorem \ref{teo:lipschitz} is quite involved, thus we prefer spending a large part of this introduction in order to neatly introduce the main ideas and novelties. 
\par
As usual when dealing with higher order regularity, the first issue to be tackled is that the minimizer $U$ lacks the smoothness needed to perform all the necessary manipulations. However, this is a minor issue, which can be easily fixed by approximating our local minimizer $U$ with solutions $u_\varepsilon$ of uniformly elliptic problems, see Section \ref{sec:preliminaries}. The solutions $u_\varepsilon$ are as smooth as needed (basically, $C^2$ regularity is enough) and they converge to our original local minimizer $U$, as the small regularization parameter $\varepsilon>0$ converges to $0$. Thus it is sufficient to prove ``good'' a priori estimates on $u_\varepsilon$ which are stable when $\varepsilon$ goes to $0$. 
\par
For this reason, in the rest of this subsection we will pretend that our minimizer $U$ is smooth and explain how to get the needed a priori estimates.
\vskip.2cm\noindent
The building blocks of Theorem \ref{teo:lipschitz} are the following two estimates:
\begin{itemize}
\item[{\bf A.}] a local $L^\infty-L^\gamma$ a priori estimate on the gradient, i.e. an estimate of the type
\begin{equation}
\label{linftyapriori}
\|\nabla U\|_{L^\infty(B_r)}\le C\, \left(\int_{B_R} |\nabla U|^\gamma\,dx\right)^\frac{\Theta}{\gamma},
\end{equation}
where $\gamma\ge p_N+2$ and $\Theta>1$ are two suitable exponents. This is the content of Proposition \ref{prop:a_priori_estimate};
\vskip.2cm
\item[{\bf B.}] a local higher integrability estimate {\it of arbitrary order} on the gradient, i.e. an estimate of the type
\[
\int_{B_R} |\nabla U|^q\,dx\le C_q,
\]
where $1<q<+\infty$ is arbitrary and $C_q>0$ is a constant depending only on $q$, the data of the problem {\it and the local $L^\infty$ norm of} $U$. This is proved in Proposition \ref{prop:pierre}.
\end{itemize}  
It is straightforward to see that once {\bf A.} and {\bf B.} are established, then our main result easily follows. We explain how to get both of them:
\vskip.2cm\noindent
\begin{itemize}
\item in order to obtain {\bf A.} we employ the same method that we successfully applied in \cite[Theorem 1.1]{BBLV}, for the standard growth case $p_1=\dots=p_N=p$. This is based on a new class of Caccioppoli-type inequalities for $\nabla U$, which have been first introduced by the two authors in \cite{BB} and then generalized and exploited in its full generality in \cite{BBLV}.
\par
In a nutshell, the idea is to take the equation satisfied by $U$
\[
\sum_{i=1}^N \Big(|U_{x_i}|^{p_i-2}\,U_{x_i}\Big)_{x_i}=0,
\]
differentiate it with respect to $x_j$ and then insert weird test functions of the form
\[
\Phi(U_{x_j})\,\Psi(U_{x_k}),
\]
with $k,j\in\{1,\dots,N\}$. With these new Caccioppoli-type inequalities at hand, we can follow the same scheme as in \cite[Proposition 5.1]{BBLV} and obtain \eqref{linftyapriori}.
\par
We point out that, apart from a number of technical complications linked to the fact that $p_1\not=p_N$, in the present setting there is a crucial difference with the case treated in \cite{BBLV}. Indeed, after a Moser--type iteration, there we obtained an a priori estimate of the type
\begin{equation}
\label{puguali}
\|\nabla U\|_{L^\infty(B_r)}\le C\, \left(\int_{B_R} |\nabla U|^{p+2}\,dx\right)^\frac{1}{p+2}.
\end{equation}
Apparently, in that case as well we needed the further higher integrability information $\nabla U\in L^{p+2}_{\rm loc}$. However, thanks to the homogeneity of the estimate \eqref{puguali}, one can use a standard interpolation trick (see the Step 4 of the proof of \cite[Proposition 5.1]{BBLV}) and upgrade \eqref{puguali} to the following
\[
\|\nabla U\|_{L^\infty(B_r)}\le C\, \left(\int_{B_R} |\nabla U|^{p}\,dx\right)^\frac{1}{p}.
\]
This does not require any prior integrability information on $\nabla U$ beyond the natural growth exponent. Thus in the standard growth case, we are dispensed with point {\bf B.}, i.e. point {\bf A.} is enough to conclude.
On the contrary, in our case, the same trick does not apply to estimate \eqref{linftyapriori}, because of the presence of the exponent $\Theta>1$. For this reason we need a higher integrability information on $\nabla U$.
\par
In order to have a better understanding of the proof described above, we refer the interested reader to  the Introduction of \cite{BBLV}, where the whole strategy for point {\bf A.} is explained in details;
\vskip.2cm\noindent
\item part {\bf B.} is the really involved point of the whole proof. At first, we point out that we do not have a good control on the exponent $\gamma$ appearing in \eqref{linftyapriori} (unless some restrictions on the ratio $p_N/p_1$ are imposed). For this reason, we need to gain as much integrability on $\nabla U$ as possible. This is a classical subject in the regularity theory for functionals with nonstandard growth conditions, i.e. integrability gain on the gradients of minimizers. 
\par
Since in general minimizers of this kind of functionals may be very irregular when $p_1$ and $p_N$ are too far apart, usually one needs to impose some restrictions on the ratio $p_N/p_1$ to get some regularity. These restriction are typically of the type
\[
\frac{p_N}{p_1}< c_N,\quad \mbox{ for some constant $c_N>0$ such that }\quad \lim_{N\to\infty} c_N=1.
\]
If one further supposes {\it local minimizers to be bounded}, then the previous restriction can be relaxed to conditions of the type 
\[
\frac{p_N}{p_1}< C\qquad \mbox{ or }\qquad p_N<p_1+C,
\] 
with a {\it universal} constant $C>0$ . In any case, to the best of our knowledge all the results appearing in the literature require some upper bound on the ratio $p_N/p_1$. More precisely, all the results except one: in the very interesting paper \cite{BFZ} by Bildhauer, Fuchs and Zhong, the authors consider a functional with nonstandard growth of the type
\begin{equation}
\label{BFZfunctional}
(u,\Omega')\mapsto \int_{\Omega'} \left(\sum_{i=1}^{N-1} |u_{x_i}|^2\right)^\frac{p_1}{2}\,dx+\int_{\Omega'} |u_{x_N}|^{p_2}\,dx,\qquad \mbox{ with } p_1\le p_2,
\end{equation}
and prove that any local minimizer $u\in L^\infty_{\rm loc}$ is such that $\nabla u\in L^q_{\rm loc}$ for every $1<q<+\infty$, {\it no matter how large the ratio $p_2/p_1$ is}, see \cite[Theorem 1.1]{BFZ}. The idea of \cite{BFZ} is partially inspired from Choe's result  \cite[Theorem 3]{Ch}, which in turn  seems to find its roots in DiBenedetto's paper \cite{DiB} (see \cite[Proposition 3.1]{DiB}). It relies on a suitable integration by parts in conjunction with the Caccioppoli inequality for $\nabla u$. For functionals as in \eqref{BFZfunctional}, this leads   to an iterative scheme of the type
\[
``\mbox{gain of integrability on $u_{x_N}$}''\quad \Longrightarrow \quad ``\mbox{gain of integrability on $(u_{x_1},\dots,u_{x_{N-1}})$}''
\] 
and viceversa
\[
``\mbox{gain of integrability on $(u_{x_1},\dots,u_{x_{N-1}})$}''\quad \Longrightarrow \quad ``\mbox{gain of integrability on $u_{x_N}$}''.
\] 
By means of a {\it doubly recursive scheme} which is quite difficult to handle, \cite{BFZ} exploits the full power of the above approach to avoid any unnecessary restriction on the exponents. In \cite{Ch} instead, the gain of integrability was extremely simplified, at the price of taking the assumption 
\[
p_N<p_1+1.
\] 
Incidentally, we point out that this is the same assumption as in the aforementioned paper \cite{De2}, which uses however different techniques.
\par
We will try to detail the main difficulties of this method in a while. Before this, we point out that \eqref{BFZfunctional} is only concerned with two growth exponents. Moreover, the type of degeneracy of the functional \eqref{BFZfunctional} is much lighter than that of our functional $\mathfrak{F}_p$. For these reasons, even if our strategy is greatly inspired by that of \cite{BFZ}, all the estimates have to be recast and the resulting iterative scheme becomes of far reaching complexity.
\par
We now come to explain such an iterative scheme: by proceeding as in \cite{Ch} and \cite{BFZ}, we get an estimate of the type (see Proposition \ref{prop:BFZ})
\begin{equation}
\label{BFZintro}
\begin{split}
\int_{B_{r_0}}  | U_{x_k}|^{p_k+2+\alpha}\,dx \leq C+C\,\sum_{i\not= k}\int_{B_{R_0}} |U_{x_i}|^{\frac{p_i-2}{p_k}\,(p_k+2+\alpha)}\,dx,\quad \mbox{ for } k=1,\dots,N,
\end{split}
\end{equation}
where $C>0$ depends on the data and on the local $L^\infty$ norm of $U$, as well. Here the free parameter 
$\alpha \ge 0$ has to be carefully chosen, in order to improve the gradient summability.
We observe that, technically speaking, {\it this scheme is not of Moser-type}. Indeed, the key point of \eqref{BFZintro} is that it entails estimates on a fixed component $U_{x_j}$, in terms of all the others.  
\vskip.2cm\noindent
$\bullet$ \boxed{\mbox{\tt First step}} We start by using \eqref{BFZintro} as follows: we take $k=N$ in \eqref{BFZintro} and choose $\alpha\ge 0$ in such a way that
\[
\frac{p_i-2}{p_N}\,(p_N+2+\alpha)\le p_i,\qquad \mbox{ for } i=1,\dots,N-1.
\]
It is possible to make such a choice {\it without imposing restrictions on} $p_N/p_1$, the optimal choice being\footnote{For ease of presentation, in what follows we assume that $p_i>2$ for every $i=1,\dots,N$.}
\[
p_N+2+\alpha^{(0)}_N=p_N\, \min_{1\le i\le N-1} \frac{p_i}{p_i-2}=:p_N\,q_{N-1}.
\] 
This permits to upgrade the integrability of $U_{x_N}$ to $L^{p_N\,q_{N-1}}_{\rm loc}$. This is the end of the first step.
\vskip.2cm\noindent
$\bullet$ \boxed{\mbox{\tt Second step}}
Once we gain this property on $U_{x_N}$, we shift to $U_{x_{N-1}}$: we take $k=N-1$ in \eqref{BFZintro}, that we write in the following form 
\[
\begin{split}
\int_{B_{r_0}}  | U_{x_k}|^{p_k+2+\alpha}\,dx \leq C&+C\,\sum_{i=1}^{N-2}\int_{B_{R_0}} |U_{x_i}|^{\frac{p_i-2}{p_k}\,(p_k+2+\alpha)}\,dx\\
&+C\,\int_{B_{R_0}} |U_{x_N}|^{\frac{p_N-2}{p_k}\,(p_k+2+\alpha)}\,dx.
\end{split}
\]
Then by using that
\[
U_{x_i}\in L^{p_i}_{\rm loc},\ \mbox{ for } i=1,\dots,N-2 \qquad \mbox{ and }\qquad U_{x_N}\in L^{p_N\,q_{N-1}}_{\rm loc},
\] 
we choose $\alpha$ in such a way that
\[
\left\{\begin{array}{cccl}
\dfrac{p_i-2}{p_{N-1}}\,(p_{N-1}+2+\alpha)&\le& p_i,& \mbox{ for } i=1,\dots,N-2,\\
&&&\\
\dfrac{p_N-2}{p_{N-1}}\,(p_{N-1}+2+\alpha)&\le& p_N\,q_{N-1}&
\end{array}
\right.
\]
If we set as above
\[
q_i=\frac{p_i}{p_i-2},
\]
the optimal choice is now
\[
p_{N-1}+2+\alpha^{(0)}_{N-1}=p_{N-1}\,\min \left\{q_N\,q_{N-1},\, \min_{1\le i\le N-2} q_i\right\}.
\]
However, {\it this is not the end} of the second step. Indeed, rather than applying \eqref{BFZintro} directly to the other components \(U_{x_{N-2}}, \dots , U_{x_{1}}\) as above, we come back to \(U_{x_N}\). 
\par
More precisely, we apply \eqref{BFZintro} to \(U_{x_N}\) taking into account the new information on \(U_{x_{N-1}}\). This gives higher integrability for  \(U_{x_N}\).  We next apply alternatively \eqref{BFZintro} to \(U_{x_N-1}\) and \(U_{x_{N}}\), taking into account the higher integrability gain at each step.  After a  finite number of iterations, it can be established that
\[
U_{x_N}\in L^{p_N\, q_{N-2}}_{\rm loc}\qquad \mbox{ and }\qquad U_{x_{N-1}}\in L^{p_{N-1}\, q_{N-2}}_{\rm loc}.
\] 
This is the end of the second step. 
\vskip.2cm\noindent
$\bullet$ \boxed{\mbox{\tt $j-$th step}}  (for $2\le j\le N-1$) When we land on this step, we have iteratively acquired the following knowledge
\[
U_{x_i}\in L^{p_i}_{\rm loc},\ \mbox{ for } i=1,\dots,N-j \qquad \mbox{ and }\qquad U_{x_i}\in L^{p_N\,q_{N-j+1}}_{\rm loc}, \ \mbox{ for } i=N-j+2,\dots,N.
\] 
We then start to get into play the component $U_{x_{N-j+1}}$. We take $k=N-j+1$ in \eqref{BFZintro} and we choose $\alpha$ in such a way that
\[
\left\{\begin{array}{cccl}
\dfrac{p_i-2}{p_{N-j+1}}\,(p_{N-j+1}+2+\alpha)&\le& p_i,& \mbox{ for } i=1,\dots,N-j,\\
&&&\\
\dfrac{p_i-2}{p_{N-j+1}}\,(p_{N-j+1}+2+\alpha)&\le& p_i\,q_{N-j+1},&\mbox{ for } i=N-j+2,\dots,N.
\end{array}
\right.
\]
This permits to infer that 
\[
U_{x_{N-j+1}}\in L^{p_{N-j}+2+\alpha^{(0)}_{N-j+1}}_{\rm loc},
\]
where
\[
p_{N-j+1}+2+\alpha^{(0)}_{N-j+1}=p_{N-j+1}\,\min\left\{\min_{i=1,\dots,N-j}q_i,\, q_{N-j+1}\min_{i=N-j+2,\dots,N} q_i\right\}.
\]
As illustrated in the second step, we now use this information and start to cyclically use \eqref{BFZintro} on $U_{x_N}, U_{x_{N-1}},\dots, U_{x_{N-j}}$, in order to improve their integrability. After a finite number of iterations of this algorithm, we obtain
\[
U_{x_i}\in L^{p_i\,q_{N-j+1}}_{\rm loc},\qquad \mbox{ for } i=N-j+2,\dots,N.
\]
This is the end of the $j-$th step.
\vskip.2cm\noindent
$\bullet$ \boxed{\mbox{\tt Last step}.}
We fix $q_0\ge 2$ arbitrary. We finally consider the last component $U_{x_1}$, as well. By using the starting information 
\[
U_{x_i}\in L^{p_i\,q_1},\qquad \mbox{ for } i=2,\dots,N,
\]
and
proceeding as above, we finally get
\[
U_{x_i}\in L^{p_i\, q_0}_{\textrm{loc}},\qquad \mbox{ for } 1\le i\le N. 
\]
This yields the desired conclusion.  
\vskip.2cm
The main difficulty of {\bf B.} is to prove that this algorithm does not require any restriction on the exponents \(p_i\), and that each step {\it ends up after a finite number of loops}. 

\end{itemize}

\subsection{Plan of the paper}
In Section \ref{sec:preliminaries} the reader will find the approximating scheme and all the basic material needed to understand the sequel of the paper. Section \ref{sec:caccioppoli} contains the crucial Caccioppoli-type inequalities for the gradient, needed to build up the Moser's scheme for point {\bf A.} of the strategy presented above. Then in Section \ref{sec:leerp}, we prove integral estimates for the gradient: the first one is a Caccioppoli inequality for power functions of the gradient (Proposition \ref{prop-russian}), while the second one is the self-improving scheme {\it \`a la} Bildhauer-Fuchs-Zhong (Proposition \ref{prop:BFZ}). With Sections \ref{sec:5} and \ref{sec:6}, we enter into the core of the paper: they contain the $L^\infty-L^\gamma$ gradient estimate and the higher integrability estimate for the gradient, respectively. Then in the short Section \ref{sec:7}, we eventually prove our main result.
\par
Two technical appendices conclude the paper: they contain the study of all the intricate sequences of real numbers needed in this paper.

\begin{ack}
Part of this work has been done during a visit of P. B. to Bologna \& Ferrara in February 2018 and during a visit of L. B. to Toulouse in June 2018. The latter has been financed by the ANR project ``{\it Entropies, Flots, In\'egalit\'es}'', we wish to thank Max Fathi. Hosting institutions are kindly acknowledged.
\end{ack}

\section{Preliminaries}
\label{sec:preliminaries}

We will use the same approximation scheme as in \cite[Section 2]{BBJ} and \cite[Section 5]{BLPV}.
We recall that  are interested in local minimizers of the following variational integral
\[
\mathfrak{F}_{\mathbf{p}}(u;\Omega')=\sum_{i=1}^N \frac{1}{p_i}\,\int_{\Omega'} |u_{x_i}|^{p_i}\, dx,\qquad u\in W^{1,\mathbf{p}}_{\rm loc}(\Omega),\ \Omega'\Subset\Omega,
\]
where $\mathbf{p}=(p_1,\dots,p_N)$ and $2\le p_1\le \dots\le p_N$.
In the rest of the paper, {\it we fix $U\in W^{1,\mathbf{p}}_{\rm loc}(\Omega)$ a local minimizer of} $\mathfrak{F}_\mathbf{p}$. We also fix a ball 
\[
B \Subset \Omega\quad \mbox{ such that }\quad 2\,B\Subset\Omega \mbox{ as well}.
\] 
We use the usual notation $\lambda\,B$ to denote the ball concentric with $B$, scaled by a factor $\lambda>0$. Since we have the continuous inclusion \(W^{1,\mathbf{p}}(2\,B) \subset W^{1,p_1}(2\,B)\), by Poincar\'e inequality it holds 
\[
U\in L^{p_1}(2\,B).
\]
For every $0<\varepsilon\ll 1$ and every \(x\in \overline{B}\), we set $U_\varepsilon(x)=U\ast \varrho_\varepsilon(x)$, where $\varrho_\varepsilon$ is the usual family of Friedrichs mollifiers, supported in a ball of radius $\varepsilon$ centered at the origin. We also set
\begin{equation}
\label{gepsilon}
g_{i,\varepsilon}(t)=\frac{1}{p_i}\, |t|^{p_i}+\frac{\varepsilon}{2}\, t^2,\qquad t\in\mathbb{R},\ i=1,\dots,N.
\end{equation}
Finally, we define the regularized functional
\[
\mathfrak{F}_{\mathbf{p},\varepsilon}(v;B)=\sum_{i=1}^N \int_B g_{i,\varepsilon}(v_{x_i})\, dx.
\]
The following preliminary result is standard, see \cite[Lemma 2.5 and Lemma 2.8]{BBJ}.
\begin{lm}[Basic energy estimate]
\label{lm:below}
There exists $0<\varepsilon_0<1$ such that for every $0<\varepsilon\le \varepsilon_0$, the problem
\begin{equation}
\label{approximated}
\min\left\{\mathfrak{F}_{\mathbf{p},\varepsilon}(v;B)\, :\, v-U_\varepsilon\in W^{1,\mathbf{p}}_0(B)\right\},
\end{equation}
admits a unique solution $u_\varepsilon$. 
Moreover, the following uniform estimate holds 
\[
\sum_{i=1}^N \frac{1}{p_i}\,\int_B |(u_\varepsilon)_{x_i}|^{p_i}\, dx\le \left(\sum_{i=1}^N \frac{1}{p_i}\,\int_{2\,B} |U_{x_i}|^{p_i}\,dx+\frac{\varepsilon_0}{2}\,\int_{2\,B} |\nabla U|^2\,dx\right).
\]
Finally, $u_\varepsilon\in C^{2}(B)$.
\end{lm}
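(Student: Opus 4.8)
The plan is to run the Direct Method on the strictly convex regularized functional and then invoke elliptic regularity for the resulting non-degenerate Euler--Lagrange equation. First I would fix $\varepsilon_0$ smaller than $1$ and than $\mathrm{dist}(B,\partial(2\,B))$, so that for $0<\varepsilon\le\varepsilon_0$ the mollification $U_\varepsilon=U\ast\varrho_\varepsilon$ is well defined and of class $C^\infty$ on a neighbourhood of $\overline{B}$, with $(U_\varepsilon)_{x_i}=U_{x_i}\ast\varrho_\varepsilon$ there; in particular $U_\varepsilon\in W^{1,\mathbf{p}}(B)$ and $\mathfrak{F}_{\mathbf{p},\varepsilon}(U_\varepsilon;B)<+\infty$, so the admissible class $U_\varepsilon+W^{1,\mathbf{p}}_0(B)$ is nonempty. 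The functional $\mathfrak{F}_{\mathbf{p},\varepsilon}$ is coercive on this affine class (the terms $\tfrac1{p_i}|v_{x_i}|^{p_i}$, together with the Poincar\'e inequality in $W^{1,\mathbf{p}}_0(B)$, control $\|v\|_{W^{1,\mathbf{p}}(B)}$) and sequentially weakly lower semicontinuous on the reflexive space $W^{1,\mathbf{p}}(B)$, because each $v\mapsto\int_B g_{i,\varepsilon}(v_{x_i})\,dx$ is convex and strongly continuous. A minimizing sequence is therefore bounded in $W^{1,\mathbf{p}}(B)$, and a weak limit along a subsequence is an admissible minimizer $u_\varepsilon$ (the affine class is convex and strongly closed, hence weakly closed). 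Uniqueness follows from strict convexity: since $p_i\ge2$ one has $g_{i,\varepsilon}''(t)=(p_i-1)|t|^{p_i-2}+\varepsilon\ge\varepsilon>0$, so each $g_{i,\varepsilon}$ is strictly convex; hence two minimizers share the same partial derivatives a.e.\ in the connected ball $B$ and the same trace on $\partial B$, and thus coincide.

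For the uniform energy estimate I would compare $u_\varepsilon$ with the competitor $U_\varepsilon$: minimality gives $\mathfrak{F}_{\mathbf{p},\varepsilon}(u_\varepsilon;B)\le\mathfrak{F}_{\mathbf{p},\varepsilon}(U_\varepsilon;B)$. Since the convolution is a contraction on every $L^q$ and, by the choice of $\varepsilon_0$, the values of $U_\varepsilon$ on $B$ depend only on $U$ restricted to $2\,B$, Jensen's inequality yields $\int_B|(U_\varepsilon)_{x_i}|^{p_i}\,dx\le\int_{2\,B}|U_{x_i}|^{p_i}\,dx$ and $\int_B|(U_\varepsilon)_{x_i}|^2\,dx\le\int_{2\,B}|U_{x_i}|^2\,dx$; the latter are finite because $p_i\ge2$ and $2\,B$ is bounded, so $U_{x_i}\in L^{p_i}(2\,B)\subset L^2(2\,B)$. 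Bounding the quadratic terms using $\varepsilon\le\varepsilon_0$ and discarding the nonnegative contribution $\sum_i\tfrac\varepsilon2\int_B|(u_\varepsilon)_{x_i}|^2\,dx$ on the left-hand side gives exactly the claimed inequality.

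Finally, for the interior $C^2$ regularity I would appeal to elliptic regularity for the Euler--Lagrange equation $\sum_i\bigl(g_{i,\varepsilon}'((u_\varepsilon)_{x_i})\bigr)_{x_i}=0$, whose coefficient matrix $D^2F_\varepsilon(\nabla u_\varepsilon)=\mathrm{diag}\bigl((p_i-1)|(u_\varepsilon)_{x_i}|^{p_i-2}+\varepsilon\bigr)$ satisfies $D^2F_\varepsilon\ge\varepsilon\,\mathrm{Id}$ and has $p_N$-growth from above. The bootstrap I have in mind is: first, comparing $u_\varepsilon$ with its truncations $T_M u_\varepsilon$ (an admissible competitor once $M\ge\|U_\varepsilon\|_{L^\infty(\overline B)}$, which is finite, since $|\nabla T_M u_\varepsilon|\le|\nabla u_\varepsilon|$ a.e.) and using uniqueness shows $u_\varepsilon\in L^\infty(B)$; second, one upgrades this to $\nabla u_\varepsilon\in L^\infty_{\rm loc}(B)$; third, once the gradient is locally bounded a difference-quotient argument gives $u_\varepsilon\in W^{2,2}_{\rm loc}(B)$, so every $\partial_s u_\varepsilon$ is a weak solution of the linear equation $\sum_i\bigl(g_{i,\varepsilon}''((u_\varepsilon)_{x_i})\,(\partial_s u_\varepsilon)_{x_i}\bigr)_{x_i}=0$ with bounded, uniformly elliptic, measurable coefficients, so the De Giorgi--Nash theorem yields $\nabla u_\varepsilon\in C^{0,\alpha}_{\rm loc}(B)$; fourth, the coefficients $g_{i,\varepsilon}''((u_\varepsilon)_{x_i})$ are then locally H\"older continuous (here $p_i\ge2$ is used, so that $t\mapsto(p_i-1)|t|^{p_i-2}+\varepsilon$ is locally H\"older), and Schauder estimates upgrade each $\partial_s u_\varepsilon$ to $C^{1,\alpha}_{\rm loc}(B)$, i.e.\ $u_\varepsilon\in C^2(B)$. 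The main obstacle is the second step: even though the $\varepsilon$-regularization removes the degeneracy, the equation remains genuinely anisotropic, so the local gradient bound for $u_\varepsilon$ is not completely elementary --- this is the point where one borrows, or adapts to the present non-degenerate setting, the known interior gradient estimates for equations of this structure; everything else is a routine application of linear elliptic theory.
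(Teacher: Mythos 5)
Your argument for the uniform energy estimate is exactly the one the paper gives: test minimality against $U_\varepsilon$ and use that mollification is an $L^q$-contraction (the paper phrases this via $\|\varrho_\varepsilon\|_{L^1}=1$, you via Jensen, which is the same thing). For the remaining assertions (existence, uniqueness, $C^2$ regularity), the paper simply cites \cite[Lemma 2.5 and Lemma 2.8]{BBJ} and you supply the standard Direct Method and strict-convexity arguments, which are correct. The one point that needs more care is the step you yourself flag as the obstacle, namely the interior gradient bound $\nabla u_\varepsilon\in L^\infty_{\rm loc}$. Note that the $\varepsilon$-regularization does \emph{not} reduce the growth exponent: each $g_{i,\varepsilon}$ still grows like $|t|^{p_i}$ at infinity, so the regularized problem retains the same non-standard $(p_1,p_N)$-growth as the original one, and ``the known interior gradient estimates for equations of this structure'' are precisely what the paper's Theorem \ref{teo:lipschitz} is trying to establish (and in the literature they come with restrictions on $p_N/p_1$). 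To avoid this circularity, the clean route here is Hilbert--Haar: $B$ is a ball (hence uniformly convex), the competitor class is affine over $W^{1,\mathbf{p}}_0(B)$, the integrand $\sum_i g_{i,\varepsilon}$ depends convexly on $\nabla v$ alone, and the boundary datum $U_\varepsilon$ is $C^\infty$ up to $\overline B$; the bounded slope condition therefore holds and one gets a \emph{global} Lipschitz bound $\|\nabla u_\varepsilon\|_{L^\infty(B)}\le K_\varepsilon$ from convexity alone. On the set where $|\nabla u_\varepsilon|\le K_\varepsilon$ the equation is uniformly elliptic, and the De~Giorgi--Nash plus Schauder bootstrap you describe then gives $u_\varepsilon\in C^{2}$ in the interior. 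With this replacement for your second step, your bootstrap closes and the proof is complete.
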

\begin{proof}
The only difference with respect to \cite{BBJ} is on the uniform energy estimate, due to the nonstandard growth conditions. We show how to obtain this: it is sufficient to test the minimality of $u_\varepsilon$ against $U_\varepsilon$, this gives
\[
\begin{split}
\sum_{i=1}^N \frac{1}{p_i}\,\int_B |(u_\varepsilon)_{x_i}|^{p_i}\, dx&\le \sum_{i=1}^N \frac{1}{p_i}\,\int_{B} |(U\ast\varrho_\varepsilon)_{x_i}|^{p_i}\,dx+\frac{\varepsilon}{2}\,\int_B |\nabla (U\ast \varrho_\varepsilon)|^2\,dx\\
&\le \|\varrho_\varepsilon\|_{L^1(\mathbb{R}^N)}\,\left(\sum_{i=1}^N \frac{1}{p_i}\,\int_{2\,B} |U_{x_i}|^{p_i}\,dx+\frac{\varepsilon}{2}\,\int_{2\,B} |\nabla U|^2\,dx\right).
\end{split}
\]
By using the scaling properties of the family of mollifiers, we get the conclusion.
\end{proof}
As usual, we will also rely on the following convergence result.
\begin{lm}[Convergence to a minimizer]
\label{lm:convergence}
With the same notation as above, we have
\begin{equation}
\label{troppoforte!}
\lim_{\varepsilon\to 0} \left[\|u_{\varepsilon}-U\|_{L^{p_1}(B)}+ \sum_{i=1}^N \|(u_\varepsilon-U)_{x_i}\|_{L^{p_i}(B)}\right]=0.
\end{equation}
\end{lm}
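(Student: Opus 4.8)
The plan is the usual direct-method scheme: first establish weak compactness of the family $(u_\varepsilon)$ in the anisotropic Sobolev space, then identify every weak subsequential limit with the unique local minimizer $U$, and finally upgrade weak convergence to strong convergence via a uniform-convexity argument. For the compactness step, Lemma \ref{lm:below} gives that $\sum_i \tfrac1{p_i}\|(u_\varepsilon)_{x_i}\|_{L^{p_i}(B)}^{p_i}$ is bounded by a constant depending only on $U$ and $2\,B$, and then $\|u_\varepsilon\|_{L^{p_1}(B)}\le \|u_\varepsilon-U_\varepsilon\|_{L^{p_1}(B)}+\|U_\varepsilon\|_{L^{p_1}(B)}$ is controlled by the Poincar\'e inequality applied to $u_\varepsilon-U_\varepsilon\in W^{1,\mathbf{p}}_0(B)$ together with the standard convergence $U_\varepsilon\to U$ in $W^{1,\mathbf{p}}(B)$ (here $2\,B\Subset\Omega$ ensures the mollification is well defined and convergent on $B$; moreover $\nabla U\in L^2(B)$ since $p_1\ge 2$, so $\varepsilon\int_B|\nabla U_\varepsilon|^2\,dx\to 0$). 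Hence $(u_\varepsilon)$ is bounded in $W^{1,\mathbf{p}}(B)$, and by reflexivity of each $L^{p_i}(B)$, from any subsequence I can extract a further subsequence $\varepsilon_k\to0$ with $u_{\varepsilon_k}\rightharpoonup u_*$ in $L^{p_1}(B)$ and $(u_{\varepsilon_k})_{x_i}\rightharpoonup (u_*)_{x_i}$ in $L^{p_i}(B)$ for all $i$; since $u_{\varepsilon_k}-U_{\varepsilon_k}$ lies in the convex (hence weakly closed) set $W^{1,\mathbf{p}}_0(B)$ and $U_{\varepsilon_k}\to U$, we get $u_*-U\in W^{1,\mathbf{p}}_0(B)$.

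To identify $u_*$ with $U$, I would test the minimality of $u_{\varepsilon_k}$ against the admissible competitor $U_{\varepsilon_k}$, getting
\[
\mathfrak{F}_{\mathbf{p}}(u_{\varepsilon_k};B)\le \mathfrak{F}_{\mathbf{p},\varepsilon_k}(u_{\varepsilon_k};B)\le \mathfrak{F}_{\mathbf{p},\varepsilon_k}(U_{\varepsilon_k};B)=\mathfrak{F}_{\mathbf{p}}(U_{\varepsilon_k};B)+\frac{\varepsilon_k}{2}\int_B|\nabla U_{\varepsilon_k}|^2\,dx,
\]
whose right-hand side tends to $\mathfrak{F}_{\mathbf{p}}(U;B)$. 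By weak lower semicontinuity of the convex functional $v\mapsto\mathfrak{F}_{\mathbf{p}}(v;B)$ (each integrand $t\mapsto |t|^{p_i}/p_i$ being convex), $\mathfrak{F}_{\mathbf{p}}(u_*;B)\le\liminf_k\mathfrak{F}_{\mathbf{p}}(u_{\varepsilon_k};B)\le\mathfrak{F}_{\mathbf{p}}(U;B)$, while the local minimality of $U$ on $B\Subset\Omega$, applied to the competitor $u_*$, gives the reverse inequality. Thus $u_*$ is a minimizer of $\mathfrak{F}_{\mathbf{p}}(\,\cdot\,;B)$ over $U+W^{1,\mathbf{p}}_0(B)$; since $t\mapsto|t|^{p_i}$ is strictly convex for $p_i\ge 2$, such a minimizer is unique (for two minimizers $v_1,v_2$, applying convexity to $(v_1+v_2)/2$ forces $(v_1)_{x_i}=(v_2)_{x_i}$ a.e.\ for every $i$, hence $\nabla v_1=\nabla v_2$, hence $v_1=v_2$ because $v_1-v_2\in W^{1,1}_0(B)$), so $u_*=U$. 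As every subsequence admits a further subsequence converging weakly to the \emph{same} limit $U$, the whole family satisfies $u_\varepsilon\rightharpoonup U$ in $L^{p_1}(B)$ and $(u_\varepsilon)_{x_i}\rightharpoonup U_{x_i}$ in $L^{p_i}(B)$.

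For the final upgrade, running the displayed chain for the whole family and using lower semicontinuity again shows $\mathfrak{F}_{\mathbf{p}}(u_\varepsilon;B)\to\mathfrak{F}_{\mathbf{p}}(U;B)$; moreover, by weak lower semicontinuity of each single term, $\liminf_{\varepsilon\to0}\int_B|(u_\varepsilon)_{x_i}|^{p_i}\,dx\ge\int_B|U_{x_i}|^{p_i}\,dx$, and since the finite sum of these nonnegative quantities converges to the sum of their limits, each term converges, i.e.\ $\|(u_\varepsilon)_{x_i}\|_{L^{p_i}(B)}\to\|U_{x_i}\|_{L^{p_i}(B)}$. Combined with weak convergence in the uniformly convex space $L^{p_i}(B)$ ($1<p_i<\infty$), this yields $(u_\varepsilon)_{x_i}\to U_{x_i}$ strongly in $L^{p_i}(B)$. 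Finally, writing $u_\varepsilon-U=(u_\varepsilon-U_\varepsilon)+(U_\varepsilon-U)$, the second summand tends to $0$ in $L^{p_1}(B)$ by mollification, while $u_\varepsilon-U_\varepsilon\in W^{1,\mathbf{p}}_0(B)$ has $(u_\varepsilon-U_\varepsilon)_{x_i}=(u_\varepsilon)_{x_i}-(U_\varepsilon)_{x_i}\to 0$ in $L^{p_i}(B)$ for every $i$, so the Poincar\'e inequality (using also H\"older on the bounded set $B$ and $p_i\ge p_1$ to pass from $L^{p_i}$ to $L^{p_1}$) gives $u_\varepsilon-U_\varepsilon\to 0$ in $L^{p_1}(B)$, proving \eqref{troppoforte!}. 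The genuinely delicate points are the bookkeeping of the two independent approximation errors — the mollification $U_\varepsilon$ of the boundary datum and the elliptic regularization $\varepsilon t^2/2$ — and the passage from weak to strong convergence, which relies crucially on convergence of the energies together with the uniform convexity of $L^{p_i}$.
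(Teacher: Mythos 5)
Your proof is correct and follows the same overall structure as the paper's: energy bound plus Poincar\'e for compactness, weak subsequential limit identified as $U$ via lower semicontinuity and strict convexity, then upgrade to strong gradient convergence by exploiting convergence of the energies. The only real departure is in that final step: where the paper explicitly invokes Clarkson's inequality for $p_i\ge 2$ (summed over $i$ and divided by $p_i$) together with the liminf bound on $\|( (u_{\varepsilon_k})_{x_i}+U_{x_i})/2\|_{L^{p_i}}$, you instead deduce term-by-term convergence of the $L^{p_i}$ norms from convergence of the total energy plus weak lower semicontinuity of each summand, and then appeal to the Radon--Riesz property of the uniformly convex spaces $L^{p_i}$ --- an equivalent mechanism phrased more abstractly, since uniform convexity of $L^p$ is itself a consequence of Clarkson.
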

\begin{proof}
We observe that $u_\varepsilon-U_\varepsilon\in W^{1,\mathbf{p}}_0(B)$ and the set $B$ is bounded in every direction. Thus by Poincar\'e inequality, we have
\begin{equation}
\label{poincare}
\int_B |u_\varepsilon-U_\varepsilon|^{p_i}\le C_i\,|B|^{-\frac{p_i}{N}}\,\int_B |(u_\varepsilon-U_\varepsilon)_{x_i}|^{p_i}\,dx, \qquad i=1,\dots,N,
\end{equation}
for some $C_i=C_i(N,p_i)>0$.
For $i=1$, this in turn gives
\[
\begin{split}
\|u_\varepsilon\|_{L^{p_1}(B)}&\le \|u_\varepsilon-U_\varepsilon\|_{L^{p_1}(B)}+\|U_\varepsilon\|_{L^{p_1}(B)}\\
&\le C\,\|(u_\varepsilon-U_\varepsilon)_{x_1}\|_{L^{p_1}(B)} +\|U_\varepsilon\|_{L^{p_1}(B)}\\ 
&\le C\,\|(u_\varepsilon)_{x_1}\|_{L^{p_1}(B)}+C\,\|U\|_{W^{1,p_1}(2\,B)},
\end{split}
\]
for a constant $C=C(N,p_1)>0$.
By Lemma \ref{lm:below}, the last term is uniformly bounded for $0<\varepsilon\le\varepsilon_0$.
Thus the family $\{u_\varepsilon\}_{0<\varepsilon\le \varepsilon_0}$ is bounded in $W^{1,p_1}(B)$. We can infer the weak convergence in $W^{1,p_1}(B)$ of a subsequence $\{u_{\varepsilon_k}\}_{k\in \mathbb{N}}\) to a function $u\in W^{1,p_1}(B)$. This convergence is strong in $L^{p_1}(B)$, by the Rellich-Kondra\v{s}ov Theorem. 
\par
For every $\varphi\in W^{1,\mathbf{p}}_0(B)$, we test the minimality of $u_{\varepsilon_k}$ against $\varphi+U_{\varepsilon_k}$. Thus, by lower semicontinuity of the $L^{p_i}$ norms on \(L^{p_1}(B)\), we can infer 
\begin{equation}
\label{maroc}
\begin{split}
\sum_{i=1}^N\frac{1}{p_i}\, \int_B |u_{x_i}|^{p_i}\, dx&\le\liminf_{k\to +\infty}\sum_{i=1}^N\frac{1}{p_i}\, \int_B |(u_{\varepsilon_k})_{x_i}|^{p_i}\, dx\\
&\le \lim_{k\to +\infty}\sum_{i=1}^N \frac{1}{p_i}\,\int_B |(\varphi+U_{\varepsilon_k})_{x_i}|^{p_i}\, dx+\frac{\varepsilon_k}{2}\,\int_B |\nabla \varphi+\nabla U_{\varepsilon_k}|^2\,dx\\
&=\sum_{i=1}^N \frac{1}{p_i}\,\int_B |(\varphi+U)_{x_i}|^{p_i}\, dx.
\end{split}
\end{equation}
This shows that $u_{x_i}\in L^{p_i}(B)$ for $i=1,\dots,N$ and $u$ solves 
\[
\min\left\{\mathfrak{F}_{\mathbf{p}}(v;B)\, :\, v-U\in W^{1,\mathbf{p}}_0(B)\right\}.
\]
By strict convexity of the functional $\mathfrak{F}_\mathbf{p}$, we thus obtain $u=U$.
\par
We can now take $\varphi\equiv 0$ in \eqref{maroc}. Since we know that $u=U$, we have equality everywhere in \eqref{maroc}, thus in particular
\begin{equation}
\label{norms}
\lim_{k\to +\infty} \sum_{i=1}^N \frac{1}{p_i}\,\int_B \left|(u_{\varepsilon_k})_{x_i}\right|^p\,dx=\sum_{i=1}^N \frac{1}{p_i}\,\int_B \left|U_{x_i}\right|^p\,dx,\qquad i=1,\dots,N.
\end{equation}
We next observe that the weak convergence of \(\{u_{\varepsilon_k})_{x_i}\}_{k\in \mathbb{N}}\) to \(U_{x_i}\) and the lower semicontinuity of the \(L^{p_i}\) norm on \(L^{p_1}\) imply that
\begin{equation}
\label{rehum}
\liminf_{k\to +\infty} \int_{B} \left| \frac{(u_{\varepsilon_k})_{x_i}+U_{x_i}}{2}\right|^{p_i}\,dx \geq \int_{B} \left|U_{x_i}\right|^{p_i}\,dx,\qquad i=1,\dots,N.
\end{equation}
Moreover, by Clarkson's inequality for $p_i\ge 2$, one has
\[
\left\|\frac{(u_{\varepsilon_k})_{x_i}+U_{x_i}}{2}\right\|^{p_i}_{L^{p_i}(B)}+\left\|\frac{(u_{\varepsilon_k})_{x_i}-U_{x_i}}{2}\right\|^{p_i}_{L^{p_i}(B)}\le \frac{1}{2}\,\left(\|(u_{\varepsilon_k})_{x_i}\|^{p_i}_{L^{p_i}(B)}+\|U_{x_i}\|^{p_i}_{L^{p_i}(B)}\right).
\]
We divide by $p_i$, sum over $i=1,\dots,N$ and rely on \eqref{norms} and \eqref{rehum}  to obtain
\[
\lim_{k\to +\infty} \sum_{i=1}^N \frac{1}{p_i}\left\|(u_{\varepsilon_k})_{x_i}-U_{x_i}\right\|^{p_i}_{L^{p_i}(B)}=0.
\]
By using this into \eqref{poincare} with $i=1$ and using the strong convergence of $U_{\varepsilon_k}$ to $U$, we get
\[
\lim_{k\to +\infty}\left[ \left\|u_{\varepsilon_k}-U\right\|_{L^{p_1}(B)} + \sum_{i=1}^N \left\|(u_{\varepsilon_k})_{x_i}-U_{x_i}\right\|_{L^{p_i}(B)}\right]=0.
\]
Finally, we observe that we can repeat this argument with any subsequence of the original family $\{(u_{\varepsilon})_{\varepsilon>0}\}$. Thus the above limit holds true for the whole family $\{u_{\varepsilon}\}_{0<\varepsilon\le \varepsilon_0}$ instead of $\{u_{\varepsilon_k}\}_{k\in \mathbb{N}}$ and \eqref{troppoforte!} follows.
\end{proof}
We recall that our main result Theorem \ref{teo:lipschitz} is valid for {\it bounded} local minimizers. Thus, the following simple uniform $L^\infty$ estimate will be crucial.
\begin{prop}[Uniform $L^\infty$ estimate]
\label{prop:Linfty}
With the notation above, let us further assume that $U\in L^\infty(2\,B)$. Then for every $0<\varepsilon\le \varepsilon_0$ we have
\[
\|u_\varepsilon\|_{L^\infty(B)}\le \|U\|_{L^\infty(2\,B)}.
\]
\end{prop}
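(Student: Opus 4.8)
The plan is to run a standard truncation argument directly on the regularized problem \eqref{approximated}, comparing the energy of $u_\varepsilon$ with that of its truncation at height $M:=\|U\|_{L^\infty(2\,B)}$. The first point to record is that the boundary datum is already under control: for $x\in\overline{B}$ the mollified function $U_\varepsilon(x)=U\ast\varrho_\varepsilon(x)$ is an average of the values of $U$ on the ball $B_\varepsilon(x)$, and $B_\varepsilon(x)\subset 2\,B$ as soon as $\varepsilon<\mathrm{dist}(B,\partial(2\,B))$. We may assume $\varepsilon_0$ small enough that this holds for every $0<\varepsilon\le\varepsilon_0$, so that $\|U_\varepsilon\|_{L^\infty(\overline{B})}\le M$.

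Next I would introduce the truncation $v:=\max\{-M,\min\{M,u_\varepsilon\}\}$. Since truncation preserves anisotropic Sobolev regularity, $v\in W^{1,\mathbf{p}}(B)$. Writing $T$ for the $1$-Lipschitz truncation map at height $\pm M$, the bound $|U_\varepsilon|\le M$ gives $T(U_\varepsilon)=U_\varepsilon$, hence $v-U_\varepsilon=T(u_\varepsilon)-T(U_\varepsilon)$. Starting from $u_\varepsilon-U_\varepsilon\in W^{1,\mathbf{p}}_0(B)$, a routine approximation (replace $u_\varepsilon-U_\varepsilon$ by functions in $C^\infty_c(B)$, apply $T(U_\varepsilon+\cdot)-U_\varepsilon$, which vanishes near $\partial B$, and pass to the limit using that $\nabla w=0$ a.e.\ on $\{w=\pm M\}$) shows $v-U_\varepsilon\in W^{1,\mathbf{p}}_0(B)$. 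Therefore $v$ is an admissible competitor in \eqref{approximated}.

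It then remains to compare energies. On $\{|u_\varepsilon|\le M\}$ one has $\nabla v=\nabla u_\varepsilon$ a.e., while on $\{|u_\varepsilon|>M\}$ one has $\nabla v=0$ a.e.; using the explicit form \eqref{gepsilon} and the fact that each $g_{i,\varepsilon}$ is nonnegative with $g_{i,\varepsilon}(0)=0$, this yields
\[
\mathfrak{F}_{\mathbf{p},\varepsilon}(v;B)=\sum_{i=1}^N\int_{\{|u_\varepsilon|\le M\}}g_{i,\varepsilon}\big((u_\varepsilon)_{x_i}\big)\,dx\le \sum_{i=1}^N\int_{B}g_{i,\varepsilon}\big((u_\varepsilon)_{x_i}\big)\,dx=\mathfrak{F}_{\mathbf{p},\varepsilon}(u_\varepsilon;B).
\]
Since $u_\varepsilon$ is the minimizer in \eqref{approximated}, the reverse inequality also holds, so $\mathfrak{F}_{\mathbf{p},\varepsilon}(v;B)=\mathfrak{F}_{\mathbf{p},\varepsilon}(u_\varepsilon;B)$; by the uniqueness part of Lemma \ref{lm:below} we conclude $v=u_\varepsilon$, i.e.\ $|u_\varepsilon|\le M$ a.e.\ in $B$. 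As $u_\varepsilon\in C^2(B)$, the inequality holds everywhere in $B$, which is the assertion.

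The only genuinely technical point is the admissibility of $v$, that is, the verification that $v-U_\varepsilon\in W^{1,\mathbf{p}}_0(B)$ — and even this is a by now classical lattice-type property of (anisotropic) Sobolev spaces with zero boundary values, combined with the a priori bound on $U_\varepsilon$. Everything else is an immediate consequence of minimality and of the structure of the functions $g_{i,\varepsilon}$.
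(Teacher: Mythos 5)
Your proof is correct, but it follows a genuinely different route from the paper's. The paper's argument is a one--liner: invoke the maximum principle (citing Stampacchia) to get $\max_B|u_\varepsilon|=\max_{\partial B}|u_\varepsilon|=\max_{\partial B}|U_\varepsilon|\le\|U\|_{L^\infty(2B)}$. You instead run the \emph{variational} version from scratch: show the truncation $v=\max\{-M,\min\{M,u_\varepsilon\}\}$ at level $M=\|U\|_{L^\infty(2B)}$ is an admissible competitor (using $|U_\varepsilon|\le M$, so that $v-U_\varepsilon=T(u_\varepsilon)-T(U_\varepsilon)$, combined with the lattice property of $W^{1,\mathbf p}_0$), observe that truncation cannot increase $\mathfrak F_{\mathbf p,\varepsilon}(\cdot;B)$ because each $g_{i,\varepsilon}\ge 0$ and $g_{i,\varepsilon}(0)=0$, and conclude $v=u_\varepsilon$ from the uniqueness of the minimizer in Lemma~\ref{lm:below}. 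What each buys: the paper's proof is two lines but delegates the whole content to an external reference; yours is self-contained, transparent about what structural facts are being used (nonnegativity, vanishing at $0$, strict convexity coming from the $\varepsilon t^2/2$ regularization), and easily generalizes to any nonnegative convex integrands vanishing at the origin. The one place where you are slightly terse is the admissibility of $v$: the approximation argument you sketch is the right one, but it hinges on the continuity of the Lipschitz composition $w\mapsto T(U_\varepsilon+w)-U_\varepsilon$ in $W^{1,1}$, and on the fact that it maps $C^\infty_c(B)$ into compactly supported functions precisely because $T(U_\varepsilon)=U_\varepsilon$; it is worth stating this dependence explicitly rather than calling it routine. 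In substance the two arguments are close — Stampacchia's cited theorem is itself proved by a comparison of this kind — but as written yours is a different, more elementary proof.
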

\begin{proof}
By the maximum principle, see for example \cite[Theorem 2.1]{St}, we have
\[
\max_{B} |u_{\varepsilon}| = \max_{\partial B} |u_{\varepsilon}| = \max_{\partial B} |U_{\varepsilon}|\le \max_{\overline B} |U_\varepsilon|.
\]
By recalling the construction of $U_\varepsilon$ and using the hypothesis on $U$, we get the desired conclusion.
\end{proof}
As in \cite{BBLV}, the following standard technical result will be useful. The proof can be found in \cite[Lemma 6.1]{Gi}, for example.
\begin{lm}
\label{lm:giusti}
Let $0<r<R$ and let $Z(t):[r,R]\to [0,\infty)$ be a bounded function. Assume that for $r\le t<s\le R$ we have
\[
Z(t)\le \frac{\mathcal{A}}{(s-t)^{\alpha_0}}+\frac{\mathcal{B}}{(s-t)^{\beta_0}}+\mathcal{C}+\vartheta\,Z(s),
\]
with $\mathcal{A},\mathcal{B},\mathcal{C}\ge 0$, $\alpha_0\ge \beta_0>0$ and $0\le \vartheta<1$. Then we have
\[
Z(r)\le \left(\frac{1}{(1-\lambda)^{\alpha_0}}\,\frac{\lambda^{\alpha_0}}{\lambda^{\alpha_0}-\vartheta}\right)\,\left[\frac{\mathcal{A}}{(R-r)^{\alpha_0}}+\frac{\mathcal{B}}{(R-r)^{\beta_0}}+\mathcal{C}\right],
\]
where $\lambda$ is any number such that
\[
\vartheta^\frac{1}{\alpha_0}<\lambda<1.
\]
\end{lm}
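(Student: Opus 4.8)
The plan is to run the classical iteration-on-radii argument that underlies \cite[Lemma 6.1]{Gi}. Fix once and for all a number $\lambda$ with $\vartheta^{1/\alpha_0}<\lambda<1$, so that in particular $\vartheta\,\lambda^{-\alpha_0}<1$, and define an increasing sequence of radii by
\[
t_0=r,\qquad t_{k+1}=t_k+(1-\lambda)\,\lambda^k\,(R-r),\qquad k\ge 0.
\]
Summing the geometric series gives $t_n=r+(R-r)\,(1-\lambda^n)$, so the $t_k$ are strictly increasing, belong to $[r,R]$, and $t_k\to R$; moreover $t_{k+1}-t_k=(1-\lambda)\,\lambda^k\,(R-r)$. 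This is the only ``construction'' in the proof, and it is forced by the shape of the desired constant.

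Next I would apply the hypothesis with $t=t_k$ and $s=t_{k+1}$, which is legitimate since $r\le t_k<t_{k+1}\le R$, to get
\[
Z(t_k)\le \frac{\mathcal{A}}{\big((1-\lambda)\lambda^k(R-r)\big)^{\alpha_0}}+\frac{\mathcal{B}}{\big((1-\lambda)\lambda^k(R-r)\big)^{\beta_0}}+\mathcal{C}+\vartheta\,Z(t_{k+1}).
\]
Iterating this bound from $k=0$ to $k=n-1$ and collecting the accumulating powers of $\vartheta$ yields
\[
Z(r)\le \sum_{k=0}^{n-1}\vartheta^k\left[\frac{\mathcal{A}}{\big((1-\lambda)\lambda^k(R-r)\big)^{\alpha_0}}+\frac{\mathcal{B}}{\big((1-\lambda)\lambda^k(R-r)\big)^{\beta_0}}+\mathcal{C}\right]+\vartheta^n\,Z(t_n).
\]
Then I would let $n\to\infty$. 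Since $Z$ is bounded and $0\le\vartheta<1$, the remainder $\vartheta^n Z(t_n)$ tends to $0$. The $\mathcal{A}$-series is $\sum_{k\ge0}\big(\vartheta\,\lambda^{-\alpha_0}\big)^k=\lambda^{\alpha_0}/(\lambda^{\alpha_0}-\vartheta)$, convergent precisely because of the choice of $\lambda$; likewise the $\mathcal{B}$-series sums to $\lambda^{\beta_0}/(\lambda^{\beta_0}-\vartheta)$ and the $\mathcal{C}$-series to $1/(1-\vartheta)$, both convergent since $\lambda^{-\beta_0}\le\lambda^{-\alpha_0}$ and $1\le\lambda^{-\alpha_0}$. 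Pulling out the factors $(1-\lambda)^{-\alpha_0}(R-r)^{-\alpha_0}$ and $(1-\lambda)^{-\beta_0}(R-r)^{-\beta_0}$ gives
\[
Z(r)\le \frac{1}{(1-\lambda)^{\alpha_0}}\,\frac{\lambda^{\alpha_0}}{\lambda^{\alpha_0}-\vartheta}\,\frac{\mathcal{A}}{(R-r)^{\alpha_0}}+\frac{1}{(1-\lambda)^{\beta_0}}\,\frac{\lambda^{\beta_0}}{\lambda^{\beta_0}-\vartheta}\,\frac{\mathcal{B}}{(R-r)^{\beta_0}}+\frac{\mathcal{C}}{1-\vartheta}.
\]

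Finally I would absorb the three prefactors into the single constant stated in the lemma. Using $0<1-\lambda<1$ and $\beta_0\le\alpha_0$ we have $(1-\lambda)^{-\beta_0}\le(1-\lambda)^{-\alpha_0}$; using $0<\lambda<1$ we have $\vartheta\lambda^{-\beta_0}\le\vartheta\lambda^{-\alpha_0}<1$, hence $\lambda^{\beta_0}/(\lambda^{\beta_0}-\vartheta)\le\lambda^{\alpha_0}/(\lambda^{\alpha_0}-\vartheta)$; and since $\lambda^{\alpha_0}\le1$ one checks $1/(1-\vartheta)\le\lambda^{\alpha_0}/(\lambda^{\alpha_0}-\vartheta)$, while $(1-\lambda)^{-\alpha_0}\ge1$. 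Combining these elementary comparisons bounds each of the three prefactors by $(1-\lambda)^{-\alpha_0}\,\lambda^{\alpha_0}/(\lambda^{\alpha_0}-\vartheta)$, which is exactly the claimed estimate. There is no genuine obstacle in this argument; the only points needing a little care are verifying that the choice $\lambda>\vartheta^{1/\alpha_0}$ makes all three geometric series converge, and the short bookkeeping in the monotonicity comparisons used to merge the constants.
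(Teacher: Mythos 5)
Your proof is correct and is exactly the classical dyadic/geometric iteration argument from \cite[Lemma 6.1]{Gi}, which is the reference the paper cites in lieu of giving a proof; the only mild extra work is handling the second exponent $\beta_0\le\alpha_0$ and the constant term $\mathcal{C}$, and your monotonicity comparisons that collapse the three prefactors into the single stated constant (using $\lambda^{\beta_0}\ge\lambda^{\alpha_0}>\vartheta$ and $(1-\lambda)^{-\beta_0}\le(1-\lambda)^{-\alpha_0}$) are all sound.
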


\section{Caccioppoli-type inequalities}

\label{sec:caccioppoli}

The solution $u_\varepsilon$ of the problem \eqref{approximated} satisfies the Euler-Lagrange equation
\begin{equation}
\label{regolareg}
\sum_{i=1}^N \int g'_{i,\varepsilon}((u_\varepsilon)_{x_i})\, \varphi_{x_i}\, dx=0,\qquad \mbox{ for every }\varphi\in W^{1,\mathbf{p}}_0(B).
\end{equation}
From now on we will systematically suppress the subscript $\varepsilon$ on $u_\varepsilon$ and {\it simply write $u$}.
\par
In order to prove the gradient regularity, we need the equation satisfied by the gradient $\nabla u$.
Thus we insert a test function of the form $\varphi=\psi_{x_j}\in W^{1,\mathbf{p}}_0(B)$ in \eqref{regolareg}, compactly supported in $B$. After an integration by parts, we get
\begin{equation}
\label{derivatag}
\sum_{i=1}^N \int g_{i,\varepsilon}''(u_{x_i})\, u_{x_i\,x_j}\, \psi_{x_i}\, dx=0,
\end{equation}
for $j=1,\dots,N$. We thus found the equation solved by $u_{x_j}$. Observe that we are legitimate to integrate by parts, since $u\in C^2(B)$ by Lemma \ref{lm:below}.
\vskip.2cm\noindent
The following Caccioppoli inequality can be proved exactly as \cite[Lemma 3.2]{BBJ}, we omit the details.
\begin{lm}
\label{lm:standardcaccio}
Let $\Phi:\mathbb{R}\to\mathbb{R}^+$ be a $C^1$ convex function. 
Then there exists a constant $C=C(\mathbf{p})>0$ such that for every function $\eta\in C^\infty_0(B)$ and every $j=1,\dots,N$, we have
\begin{equation}
\label{mothergsob}
\sum_{i=1}^N \int g''_{i,\varepsilon}(u_{x_i})\,\left|\left(\Phi(u_{x_j})\right)_{x_i}\right|^2\, \eta^2\, dx \le C\,\sum_{i=1}^N \int g''_{i,\varepsilon}(u_{x_i})\,|\Phi(u_{x_j})|^2\, \eta_{x_i}^2\, dx.
\end{equation}
\end{lm}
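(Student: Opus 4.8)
The plan is to prove the Caccioppoli inequality \eqref{mothergsob} by differentiating the equation \eqref{derivatag} and testing it against a suitable test function involving $\Phi(u_{x_j})$. The natural choice is to use in \eqref{derivatag} the test function
\[
\psi = \Phi'(u_{x_j})\,\Phi(u_{x_j})\,\eta^2,
\]
which is admissible (compactly supported, and smooth enough because $u \in C^2(B)$ and $\Phi \in C^1$); strictly speaking one should first localize and then approximate, but this is routine. Computing $\psi_{x_i}$ via the chain rule produces three kinds of terms: one with $\Phi''(u_{x_j})\,(\Phi(u_{x_j}))_{x_i}\,u_{x_i x_j}\,\eta^2$ together with $(\Phi'(u_{x_j}))^2\,u_{x_ix_j}^2\,\eta^2$ (these are the ``good'' terms on the left, using that $\Phi$ is convex so $\Phi'' \geq 0$, and that $(\Phi(u_{x_j}))_{x_i} = \Phi'(u_{x_j})\,u_{x_ix_j}$), and one with $\Phi'(u_{x_j})\,\Phi(u_{x_j})\,2\,\eta\,\eta_{x_i}$ (the ``boundary'' term producing the right-hand side).

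First I would substitute this $\psi$ into \eqref{derivatag} and rearrange so that all the terms containing $\eta^2$ and $u_{x_ix_j}^2$ (with nonnegative coefficients $g_{i,\varepsilon}''(u_{x_i}) \geq 0$, using $p_i \geq 2$) sit on the left-hand side, while the single mixed term $g_{i,\varepsilon}''(u_{x_i})\,\Phi'(u_{x_j})\,\Phi(u_{x_j})\,u_{x_ix_j}\,2\,\eta\,\eta_{x_i}$ is moved to the right. Rewriting $\Phi'(u_{x_j})\,u_{x_ix_j} = (\Phi(u_{x_j}))_{x_i}$, the left-hand side dominates $\sum_i \int g_{i,\varepsilon}''(u_{x_i})\,|(\Phi(u_{x_j}))_{x_i}|^2\,\eta^2\,dx$ (discarding the $\Phi''$ term, which is nonnegative), and the right-hand side is $-2\sum_i \int g_{i,\varepsilon}''(u_{x_i})\,(\Phi(u_{x_j}))_{x_i}\,\Phi(u_{x_j})\,\eta\,\eta_{x_i}\,dx$.

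Then I would absorb: apply Young's inequality $2\,|a\,b| \leq \sigma\,a^2 + \sigma^{-1}\,b^2$ with $a = \sqrt{g_{i,\varepsilon}''(u_{x_i})}\,(\Phi(u_{x_j}))_{x_i}\,\eta$ and $b = \sqrt{g_{i,\varepsilon}''(u_{x_i})}\,\Phi(u_{x_j})\,\eta_{x_i}$, choosing $\sigma$ small (e.g. $\sigma = 1/2$) to absorb the first piece into the left-hand side. What survives on the right is exactly $C\sum_i \int g_{i,\varepsilon}''(u_{x_i})\,|\Phi(u_{x_j})|^2\,\eta_{x_i}^2\,dx$, with $C$ depending only on the absorption constant — hence only on $\mathbf{p}$ (in fact, one can take $C$ universal). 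This yields \eqref{mothergsob}.

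The only genuinely delicate point is the admissibility of the test function $\psi = \Phi'(u_{x_j})\,\Phi(u_{x_j})\,\eta^2$ in \eqref{derivatag}: since $\Phi$ is merely $C^1$, the product $\Phi'(u_{x_j})\,\Phi(u_{x_j})$ composed with $u_{x_j} \in C^1$ need not be $C^1$, so $\psi$ may fail to be a legitimate $W^{1,\mathbf p}_0$ test function a priori; this is handled by a standard truncation/approximation of $\Phi'$ by Lipschitz functions and passage to the limit, exactly as in \cite[Lemma 3.2]{BBJ}, which is why the statement refers to that reference and the details are omitted. Everything else is a mechanical chain-rule computation plus one Young's inequality, so no serious obstacle arises beyond this regularity bookkeeping.
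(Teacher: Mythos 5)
Your approach is the standard one and matches what the cited reference \cite[Lemma~3.2]{BBJ} does: insert $\psi = \Phi'(u_{x_j})\,\Phi(u_{x_j})\,\eta^2$ into \eqref{derivatag}, observe that one of the resulting terms is nonnegative and may be dropped, rewrite the remaining good term via $(\Phi(u_{x_j}))_{x_i}=\Phi'(u_{x_j})\,u_{x_ix_j}$, and absorb the cross term by Young's inequality. The approximation remark about $C^1$ versus smooth $\Phi$ is also the right way to handle admissibility (mollify $\Phi$, prove the estimate for smooth approximants, pass to the limit). Note in passing that your argument actually produces a \emph{universal} constant $C=4$, independent of $\mathbf{p}$, which of course is consistent with the statement.

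There is, however, one bookkeeping slip worth flagging because it touches the justification of the key sign argument. Differentiating $\psi$ gives
\[
\psi_{x_i}=\Phi''(u_{x_j})\,\Phi(u_{x_j})\,u_{x_ix_j}\,\eta^2+(\Phi'(u_{x_j}))^2\,u_{x_ix_j}\,\eta^2+2\,\Phi'(u_{x_j})\,\Phi(u_{x_j})\,\eta\,\eta_{x_i},
\]
so after multiplying by $g_{i,\varepsilon}''(u_{x_i})\,u_{x_ix_j}$ the ``extra'' term in the integrand is $g_{i,\varepsilon}''(u_{x_i})\,\Phi''(u_{x_j})\,\Phi(u_{x_j})\,u_{x_ix_j}^2\,\eta^2$, with $\Phi(u_{x_j})$ and not $(\Phi(u_{x_j}))_{x_i}=\Phi'(u_{x_j})\,u_{x_ix_j}$ as you wrote. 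The distinction matters: your displayed term would read $\Phi''\,\Phi'\,u_{x_ix_j}^2\,\eta^2$, whose sign depends on $\Phi'$, and $\Phi'$ is \emph{not} assumed nonnegative (a nonnegative convex $\Phi$ can decrease on part of $\mathbb{R}$, e.g.\ $\Phi(t)=(t-1)^2$). The correct discarded term is $\Phi''\,\Phi\,u_{x_ix_j}^2\,\eta^2$, whose nonnegativity uses \emph{both} $\Phi''\ge 0$ (convexity) \emph{and} $\Phi\ge 0$ (the hypothesis $\Phi:\mathbb{R}\to\mathbb{R}^+$); you only invoked the former. Once this is corrected the proof goes through exactly as you describe.
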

Actually, we can drop the requirement that $\Phi$ has to be convex, under some circumstances. The resulting Caccioppoli inequality is of interest.
\begin{lm}
\label{lm:negativepower}
Let $-1<\alpha\le 0$. For every function $\eta\in C^\infty_0(B)$ and every $j=1,\dots,N$, we have
\[
\sum_{i=1}^N \int g_{i,\varepsilon}''(u_{x_i})\, u_{x_i\,x_j}^2\,|u_{x_j}|^\alpha\,\eta^2\,dx\le \frac{4}{(1+\alpha)^2}\,\sum_{i=1}^N\int g_{i,\varepsilon}''(u_{x_i})\, |u_{x_j}|^{\alpha+2}\,|\eta_{x_i}|^2\,dx.
\]
\end{lm}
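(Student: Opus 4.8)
The plan is to reproduce the scheme of Lemma~\ref{lm:standardcaccio}, replacing the convex function $\Phi$ by the (non-convex on all of $\mathbb{R}$) function $t\mapsto |t|^{\alpha}t$, which is the natural candidate since differentiating it produces the weight $|u_{x_j}|^\alpha$ in the left-hand side. Concretely, for a fixed index $j$ I would formally insert into the differentiated equation \eqref{derivatag} the test function
\[
\psi = |u_{x_j}|^{\alpha}\,u_{x_j}\,\eta^2,
\]
where $\eta\in C^\infty_0(B)$. This is legitimate because $-1<\alpha\le 0$ guarantees that $|u_{x_j}|^{\alpha}u_{x_j}$ is a Lipschitz function of $u_{x_j}$ (its derivative $(1+\alpha)|u_{x_j}|^{\alpha}$ is bounded near $0$ and the product is continuous), so $\psi\in W^{1,\mathbf p}_0(B)$ since $u\in C^2(B)$; one should check that the places where $u_{x_j}=0$ cause no trouble, e.g. by first working with $(\varepsilon_0^2+u_{x_j}^2)^{\alpha/2}u_{x_j}$ and letting $\varepsilon_0\to 0$ via dominated convergence, using $\alpha>-1$.

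Computing $\psi_{x_i}$ gives two terms:
\[
\psi_{x_i} = (1+\alpha)\,|u_{x_j}|^{\alpha}\,u_{x_j x_i}\,\eta^2 + 2\,|u_{x_j}|^{\alpha}\,u_{x_j}\,\eta\,\eta_{x_i}.
\]
Plugging this into \eqref{derivatag} and moving the second (cross) term to the right-hand side yields
\[
(1+\alpha)\sum_{i=1}^N \int g_{i,\varepsilon}''(u_{x_i})\,u_{x_i x_j}^2\,|u_{x_j}|^{\alpha}\,\eta^2\,dx
= -2\sum_{i=1}^N \int g_{i,\varepsilon}''(u_{x_i})\,u_{x_i x_j}\,u_{x_j}\,|u_{x_j}|^{\alpha}\,\eta\,\eta_{x_i}\,dx.
\]
Here I use crucially that $g_{i,\varepsilon}''\ge 0$, so the left-hand side is a genuine weighted $L^2$ quantity. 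Then I estimate the right-hand side by Cauchy--Schwarz with a parameter: writing $g_{i,\varepsilon}''(u_{x_i})\,u_{x_i x_j}\,u_{x_j}|u_{x_j}|^\alpha\eta\eta_{x_i} = \big(\sqrt{g_{i,\varepsilon}''}\,u_{x_i x_j}|u_{x_j}|^{\alpha/2}\eta\big)\big(\sqrt{g_{i,\varepsilon}''}\,u_{x_j}|u_{x_j}|^{\alpha/2}\eta_{x_i}\big)$ and applying Young's inequality $2ab\le \tau a^2 + \tau^{-1}b^2$ with $\tau = (1+\alpha)/2$, absorbs half of the left-hand side into the left, leaving
\[
\frac{1+\alpha}{2}\sum_{i=1}^N \int g_{i,\varepsilon}''(u_{x_i})\,u_{x_i x_j}^2\,|u_{x_j}|^{\alpha}\,\eta^2\,dx
\le \frac{2}{1+\alpha}\sum_{i=1}^N \int g_{i,\varepsilon}''(u_{x_i})\,|u_{x_j}|^{\alpha+2}\,\eta_{x_i}^2\,dx,
\]
which is exactly the claimed inequality after multiplying by $2/(1+\alpha)$, the constant $4/(1+\alpha)^2$ appearing as announced.

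The main obstacle I anticipate is purely the admissibility/regularity of the test function: unlike in Lemma~\ref{lm:standardcaccio}, where $\Phi$ is $C^1$ and convex, here $t\mapsto|t|^\alpha t$ is only $C^0$ (and not $C^1$ when $\alpha<0$) at the origin, and the weight $|u_{x_j}|^\alpha$ is unbounded there. So the honest version of the argument should run with the regularized function $t\mapsto (\delta^2+t^2)^{\alpha/2}t$ for $\delta>0$, carry out the integration by parts (now fully justified since this function is $C^1$ with bounded derivative and $u\in C^2$), obtain the analogous inequality with $(\delta^2+u_{x_j}^2)^{\alpha/2}$ in place of $|u_{x_j}|^\alpha$ and a constant uniform in $\delta$, and then pass to the limit $\delta\to 0$: on the right-hand side the integrand is dominated (since $\alpha+2>1>0$, $(\delta^2+t^2)^{\alpha/2}t^2$ is bounded on compacta and decreasing in $\delta$... more precisely $\le |t|^{\alpha+2}$), and on the left Fatou's lemma gives the correct inequality. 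One should also note that no sign condition on $\alpha$ beyond $-1<\alpha\le 0$ is used except that $1+\alpha>0$ is what makes the coercive term survive and the constant finite; the upper bound $\alpha\le 0$ is not actually needed for this computation but is the regime in which the lemma will be applied.
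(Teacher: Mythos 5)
Your proof is correct and takes essentially the same route as the paper: the paper also tests \eqref{derivatag} with the regularized function $\psi = u_{x_j}(\kappa + |u_{x_j}|^2)^{\alpha/2}\eta^2$, uses $\alpha\le 0$ together with the pointwise bound $(\kappa+t^2)^{(\alpha-2)/2}t^2 \le (\kappa+t^2)^{\alpha/2}$ to extract the factor $(1+\alpha)$, absorbs via Young, and passes to the limit $\kappa\to 0$ by Fatou and dominated convergence exactly as you sketch. The one imprecision in your closing remark is that the sign condition $\alpha\le 0$ \emph{is} used to get the constant $4/(1+\alpha)^2$ uniformly in the regularization parameter (the regularized derivative is only bounded below by $(1+\alpha)(\kappa+t^2)^{\alpha/2}$ when $\alpha\le 0$); for $\alpha>0$ the same argument still produces a Caccioppoli inequality but with a different constant, and in any case that regime is already covered by Lemma \ref{lm:standardcaccio}.
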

When \(\alpha<0\), in the left hand side of the above inequality,  the quantity \(u_{x_i\,x_j}^2\,|u_{x_j}|^\alpha\) is defined to be \(0 \) on the set where \(u_{x_j}\) vanishes.
\begin{proof}
Let $\kappa>0$. We take in \eqref{derivatag} the test function
\[
\psi=u_{x_j}\,(\kappa+|u_{x_j}|^2)^\frac{\alpha}{2}\,\eta^2,
\]
where $\eta$ is as in the statement. We get
\[
\begin{split}
\sum_{i=1}^N \int& g_{i,\varepsilon}''(u_{x_i})\, u_{x_i\,x_j}^2\,(\kappa+|u_{x_j}|^2)^\frac{\alpha}{2}\,\eta^2\,dx\\
&+\alpha\,\sum_{i=1}^N \int g_{i,\varepsilon}''(u_{x_i})\, u_{x_i\,x_j}^2\,(\kappa+|u_{x_j}|^2)^\frac{\alpha-2}{2}\,|u_{x_j}|^2\,\eta^2\,dx\\
&=-2\,\sum_{i=1}^N\int g_{i,\varepsilon}''(u_{x_i})\, u_{x_i\,x_j}\,(\kappa+|u_{x_j}|^2)^\frac{\alpha}{2}\,u_{x_j}\,\eta\,\eta_{x_i}\,dx.
\end{split}
\]
We observe that 
\[
(\kappa+|u_{x_j}|^2)^\frac{\alpha-2}{2}\,|u_{x_j}|^2\le (\kappa+|u_{x_j}|^2)^\frac{\alpha}{2}\qquad \mbox{ and }\qquad (\kappa+|u_{x_j}|^2)^\frac{\alpha}{2}\,|u_{x_j}| \le (\kappa+|u_{x_j}|^2)^\frac{\alpha+1}{2}.
\]
From the previous identity, we get (remember that $\alpha$ is non positive)
\[
(1+\alpha)\,\sum_{i=1}^N \int g_{i,\varepsilon}''(u_{x_i})\, u_{x_i\,x_j}^2\,(\kappa+|u_{x_j}|^2)^\frac{\alpha}{2}\,\eta^2\,dx\le 2\,\sum_{i=1}^N\int g_{i,\varepsilon}''(u_{x_i})\, |u_{x_i\,x_j}|\,(\kappa+|u_{x_j}|^2)^\frac{\alpha+1}{2}\,|\eta|\,|\eta_{x_i}|\,dx.
\]
By using Young's inequality, we can absorb the Hessian term in the right-hand side, to get
\[
\sum_{i=1}^N \int g_{i,\varepsilon}''(u_{x_i})\, u_{x_i\,x_j}^2\,(\kappa+|u_{x_j}|^2)^\frac{\alpha}{2}\,\eta^2\,dx\le \frac{4}{(1+\alpha)^2}\,\sum_{i=1}^N\int g_{i,\varepsilon}''(u_{x_i})\, (\kappa+|u_{x_j}|^2)^\frac{\alpha+2}{2}\,|\eta_{x_i}|^2\,dx.
\]
By taking the limit as $\kappa$ goes to $0$ on both sides, and using Fatou Lemma on the left-hand side and the Dominated Convergence Theorem in the right-hand side, we get the conclusion.
\end{proof}
As in the standard growth case $p_1=\dots=p_N=p$, a key role is played by the following sophisticated Caccioppoli-type inequality for the gradient. The proof is the same as that of \cite[Proposition 3.2]{BBLV} and we omit it. It is sufficient to observe that the proof in \cite{BBLV} does not depend on the particular form of the functions $g_{i,\varepsilon}$.
 \begin{prop}[Weird Caccioppoli inequality]
\label{prop:weird}
Let \(\Phi, \Psi :[0,+\infty)\to [0,+\infty)\) be two non-decreasing continuous functions. We further assume that \(\Psi\) is convex and $C^1$. 
Let \(\eta\in C^{\infty}_0(B)\) and \(0\le \theta\le 2\), then for every \(k,j=1, \dots, N\),
\begin{equation}
\label{chiaraf}
\begin{split}
\sum_{i=1}^N \int g_{i,\varepsilon}''(u_{x_i})&\,u_{x_i x_j}^2\,\Phi(u_{x_j}^2)\,\Psi(u_{x_k}^2)\,\,\eta^2\,dx\\
&\le C\,\sum_{i=1}^N \int g_{i,\varepsilon}''(u_{x_i})\,u_{x_j}^2\,\Phi(u_{x_j}^2)\,\Psi(u_{x_k}^2)\,|\nabla \eta|^2\,dx\\
&+C\,\left(\sum_{i=1}^N \int g_{i,\varepsilon}''(u_{x_i})\,u_{x_i x_j}^2\,u_{x_j}^2\,\Phi(u_{x_j}^2)^2\,\Psi'(u_{x_k}^2)^\theta \,\eta^2\,dx\right)^\frac{1}{2}\\
&\times \left(\sum_{i=1}^N\int g_{i,\varepsilon}''(u_{x_i})\,|u_{x_k}|^{2\theta}\,\Psi(u_{x_k}^2)^{2-\theta}\,|\nabla \eta|^2\,dx \right)^\frac{1}{2}.
\end{split}
\end{equation}
\end{prop}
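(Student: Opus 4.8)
The plan is to reproduce, essentially verbatim, the argument of \cite[Proposition 3.2]{BBLV}: the only property of the functions $g_{i,\varepsilon}$ that enters is $g''_{i,\varepsilon}\ge 0$, so the fact that here the exponents $p_i$ may differ is immaterial. Fix $j,k\in\{1,\dots,N\}$, $\eta\in C^\infty_0(B)$ and $\theta\in[0,2]$. Since \eqref{chiaraf} does not involve $\Phi'$, I would first assume in addition that $\Phi$ is $C^1$, recovering the general case at the end by approximating $\Phi$ from below by smooth non-decreasing functions and passing to the limit (monotone convergence on the left, dominated convergence on the right, all integrals being taken over the compact set $\operatorname{supp}\eta$ where $\nabla u$ is bounded, $u=u_\varepsilon$ being $C^2$). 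With this reduction, I would test the differentiated equation \eqref{derivatag} with
\[
\psi=u_{x_j}\,\Phi(u_{x_j}^2)\,\Psi(u_{x_k}^2)\,\eta^2\in W^{1,\mathbf p}_0(B).
\]

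Expanding $\psi_{x_i}$ yields four contributions, coming respectively from $u_{x_j}$, $\Phi(u_{x_j}^2)$, $\Psi(u_{x_k}^2)$ and $\eta^2$. The first two, once inserted in \eqref{derivatag}, combine into $\sum_i\int g''_{i,\varepsilon}(u_{x_i})u_{x_ix_j}^2\big(\Phi(u_{x_j}^2)+2u_{x_j}^2\Phi'(u_{x_j}^2)\big)\Psi(u_{x_k}^2)\eta^2\,dx$, whose integrand is bounded below by $g''_{i,\varepsilon}(u_{x_i})u_{x_ix_j}^2\Phi(u_{x_j}^2)\Psi(u_{x_k}^2)\eta^2$ because $\Phi'\ge 0$ and $\Psi\ge 0$; this produces the left-hand side of \eqref{chiaraf}. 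Moving the other two terms to the right, the one coming from $\eta^2$ is handled by Young's inequality $2AB\le \tfrac14 A^2+4B^2$ with $A^2=g''_{i,\varepsilon}(u_{x_i})u_{x_ix_j}^2\,\Phi(u_{x_j}^2)\Psi(u_{x_k}^2)\eta^2$ and $B^2=g''_{i,\varepsilon}(u_{x_i})u_{x_j}^2\,\Phi(u_{x_j}^2)\Psi(u_{x_k}^2)\eta_{x_i}^2$: the $A^2$ part is reabsorbed on the left, and the $B^2$ part is exactly the first term on the right of \eqref{chiaraf}.

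There remains the term coming from $\Psi(u_{x_k}^2)$, which is the delicate one:
\[
2\sum_{i=1}^N\int g''_{i,\varepsilon}(u_{x_i})\,|u_{x_ix_j}|\,|u_{x_j}|\,\Phi(u_{x_j}^2)\,|u_{x_k}|\,|u_{x_ix_k}|\,\Psi'(u_{x_k}^2)\,\eta^2\,dx.
\]
Here I would split $\Psi'(u_{x_k}^2)=\Psi'(u_{x_k}^2)^{\theta/2}\,\Psi'(u_{x_k}^2)^{(2-\theta)/2}$ and apply the Cauchy--Schwarz inequality (in $i$ and in $x$), which bounds this term by the product of $\big(\sum_i\int g''_{i,\varepsilon}(u_{x_i})u_{x_ix_j}^2 u_{x_j}^2\Phi(u_{x_j}^2)^2\Psi'(u_{x_k}^2)^\theta\eta^2\,dx\big)^{1/2}$ — which is the first factor in \eqref{chiaraf} — and $\big(\sum_i\int g''_{i,\varepsilon}(u_{x_i})u_{x_ix_k}^2 u_{x_k}^2\Psi'(u_{x_k}^2)^{2-\theta}\eta^2\,dx\big)^{1/2}$. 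To this last factor I would apply Lemma \ref{lm:standardcaccio} \emph{with the index $k$} and the convex $C^1$ function $G$ given by $G(0)=0$, $G'(t)=|t|\,\Psi'(t^2)^{(2-\theta)/2}$ (convex since $G'$ is odd and non-decreasing on $[0,\infty)$, $\Psi'$ being non-decreasing and $0\le\theta\le2$): as $(G(u_{x_k}))_{x_i}=G'(u_{x_k})u_{x_ix_k}$ and $G'(u_{x_k})^2=u_{x_k}^2\Psi'(u_{x_k}^2)^{2-\theta}$, inequality \eqref{mothergsob} turns this factor into $\big(C\sum_i\int g''_{i,\varepsilon}(u_{x_i})|G(u_{x_k})|^2\eta_{x_i}^2\,dx\big)^{1/2}$. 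Finally, the elementary bound
\[
G(t)^2=\Big(\int_0^{|t|}s\,\Psi'(s^2)^{(2-\theta)/2}\,ds\Big)^2\le C\,|t|^{2\theta}\,\Psi(t^2)^{2-\theta},
\]
which follows from H\"older's inequality with exponents $\tfrac{2}{2-\theta}$, $\tfrac{2}{\theta}$ together with $\int_0^T\Psi'=\Psi(T)-\Psi(0)\le\Psi(T)$ (the endpoints $\theta=0,2$ being immediate), identifies this with the second factor in \eqref{chiaraf}. Collecting the three estimates and reabsorbing the Hessian term yields \eqref{chiaraf}.

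The step I expect to be the main obstacle is precisely this last term: its integrand contains the off-diagonal Hessian $u_{x_ix_k}$, which cannot be controlled pointwise, and the whole point of introducing the free parameter $\theta$, of splitting the power of $\Psi'$, and of re-running the \emph{plain} Caccioppoli inequality \eqref{mothergsob} in the direction $x_k$ is to trade that Hessian for a term living on $\operatorname{supp}\nabla\eta$ while arranging the powers of $\Psi'$ so that they recombine into the stated right-hand side; keeping track of these exponents is the genuinely delicate part. A minor, purely technical point is the admissibility of $\psi$ when $\Phi$ is merely continuous and non-decreasing, which is why the preliminary smoothing of $\Phi$ is needed.
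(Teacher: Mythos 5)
Your strategy matches the paper's exactly: the paper itself just points to \cite[Proposition 3.2]{BBLV} and says the argument carries over because only $g''_{i,\varepsilon}\ge 0$ is used, and that is precisely what you reproduce (test \eqref{derivatag} with $\psi=u_{x_j}\Phi(u_{x_j}^2)\Psi(u_{x_k}^2)\eta^2$, absorb the $\eta$-term by Young, split $\Psi'=\Psi'^{\theta/2}\Psi'^{(2-\theta)/2}$ and Cauchy--Schwarz on the mixed Hessian term, then feed the $u_{x_ix_k}$ factor back into \eqref{mothergsob} in the $k$-direction).

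One small slip needs fixing in the last step: you define $G'(t)=|t|\,\Psi'(t^2)^{(2-\theta)/2}$ and then claim that $G'$ is odd, hence $G$ convex. The function you wrote is \emph{even}, so $G'$ is not monotone and $G$ is not convex, and Lemma \ref{lm:standardcaccio} would not apply. The intended choice is
\[
G'(t)=t\,\Psi'(t^2)^{(2-\theta)/2},
\]
which is genuinely odd and non-decreasing (so $G$ is convex, even, non-negative, and $C^1$), while still giving $G'(u_{x_k})^2=u_{x_k}^2\,\Psi'(u_{x_k}^2)^{2-\theta}$ and $G(t)=\int_0^{|t|} s\,\Psi'(s^2)^{(2-\theta)/2}\,ds$, so the rest of your computation — in particular the H\"older bound $G(t)^2\le \tfrac14\,|t|^{2\theta}\,\Psi(t^2)^{2-\theta}$ — goes through unchanged. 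With that correction the argument is complete and correct.
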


\section{Local energy estimates for the regularized problem}
\label{sec:leerp}

\subsection{Towards an iterative Moser's scheme}
We recall that
\begin{equation}
\label{gisecond}
g_{i,\varepsilon}''(t)=(p_i-1)\,|t|^{p_i-2}+\varepsilon.
\end{equation}
We use Proposition \ref{prop:weird} with the following choices
\begin{equation}
\label{sceltechiare}
\Phi(t)=t^{s-1}\qquad \mbox{ and }\qquad \Psi(t)=t^m,\qquad \mbox{ for }t\ge 0,
\end{equation}
with $1\le s \le m$. The proof of the following result is exactly the same as that of \cite[Proposition 4.1]{BBLV}.
\begin{prop}[Staircase to the full Caccioppoli]\label{stair}
Let $2\le p_1\le p_2\le \dots\le p_N$ and let \(\eta\in C^{\infty}_0(B)\). Then for every \(k,j=1, \dots, N\) and $1\le s\le m$,
\begin{equation}
\label{powerchiara}
\begin{split}
\sum_{i=1}^N \int_{\Omega} g_{i,\varepsilon}''(u_{x_i})\,u_{x_i x_j}^2\,|u_{x_j}|^{2\,s-2}\,|u_{x_k}|^{2\,m}\,\,\eta^2\,dx&\le C\,\sum_{i=1}^N \int g_{i,\varepsilon}''(u_{x_i})\,|u_{x_j}|^{2\,s+2\,m}\,|\nabla \eta|^2\,dx\\
& + C\,(m+1)\,\sum_{i=1}^N \int g_{i,\varepsilon}''(u_{x_i})\,|u_{x_k}|^{2\,s+2\,m}\,|\nabla \eta|^2\,dx\\
& + \sum_{i=1}^N \int g_{i,\varepsilon}''(u_{x_i})\,u_{x_i x_j}^2\,|u_{x_j}|^{4\,s-2}\,|u_{x_k}|^{2\,m-2\,s}\,\eta^2\,dx.
\end{split}
\end{equation}
\end{prop}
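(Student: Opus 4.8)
The plan is to obtain \eqref{powerchiara} as a direct specialization of the Weird Caccioppoli inequality of Proposition \ref{prop:weird} to the power functions $\Phi(t)=t^{s-1}$ and $\Psi(t)=t^m$ from \eqref{sceltechiare}. These are admissible in Proposition \ref{prop:weird}: since $s\ge 1$, the function $\Phi$ is continuous and non-decreasing on $[0,+\infty)$, and since $m\ge 1$, the function $\Psi$ is continuous, non-decreasing, convex and $C^1$. The only genuine decision is the value of the free parameter $\theta\in[0,2]$, and the whole idea is to choose it so that the two ``Hessian--side'' powers of $u_{x_k}$ produced by \eqref{chiaraf} merge into the single exponent $2m-2s$ that appears in the last term of \eqref{powerchiara}.

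After the substitution, the left-hand side of \eqref{chiaraf} is exactly $\sum_i \int g''_{i,\varepsilon}(u_{x_i})\,u_{x_ix_j}^2\,|u_{x_j}|^{2s-2}\,|u_{x_k}|^{2m}\,\eta^2\,dx$, i.e. the left-hand side of \eqref{powerchiara}. Using $\Psi'(t)=m\,t^{m-1}$, the first factor of the product term in \eqref{chiaraf} has integrand (up to the harmless constant $m^\theta$) equal to $g''_{i,\varepsilon}(u_{x_i})\,u_{x_ix_j}^2\,|u_{x_j}|^{4s-2}\,|u_{x_k}|^{(2m-2)\theta}\,\eta^2$, while the second factor has integrand $g''_{i,\varepsilon}(u_{x_i})\,|u_{x_k}|^{2\theta+2m(2-\theta)}\,|\nabla\eta|^2$. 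Imposing $(2m-2)\,\theta=2m-2s$, that is
\[
\theta=\frac{m-s}{m-1}\in[0,1]\subset[0,2]
\]
(and simply $\theta=0$ in the trivial case $m=s=1$), one checks at once that automatically $2\theta+2m(2-\theta)=2m+2s$ as well. Hence, setting
\[
X:=\sum_{i=1}^N\int g''_{i,\varepsilon}(u_{x_i})\,u_{x_ix_j}^2\,|u_{x_j}|^{4s-2}\,|u_{x_k}|^{2m-2s}\,\eta^2\,dx,\qquad Y:=\sum_{i=1}^N\int g''_{i,\varepsilon}(u_{x_i})\,|u_{x_k}|^{2s+2m}\,|\nabla\eta|^2\,dx,
\]
the product term in \eqref{chiaraf} is controlled by $C\,m^{\theta/2}\,X^{1/2}\,Y^{1/2}$, and Young's inequality gives $C\,m^{\theta/2}\,X^{1/2}\,Y^{1/2}\le X+\frac{C^2}{4}\,m^\theta\,Y\le X+C\,(m+1)\,Y$, the last step using $m^\theta\le m\le m+1$, valid since $m\ge 1$ and $0\le\theta\le 1$.

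It remains to treat the first term on the right-hand side of \eqref{chiaraf}, which after substitution reads $C\sum_i \int g''_{i,\varepsilon}(u_{x_i})\,|u_{x_j}|^{2s}\,|u_{x_k}|^{2m}\,|\nabla\eta|^2\,dx$; here a plain Young's inequality with conjugate exponents $\frac{s+m}{s}$ and $\frac{s+m}{m}$ yields $|u_{x_j}|^{2s}\,|u_{x_k}|^{2m}\le |u_{x_j}|^{2s+2m}+|u_{x_k}|^{2s+2m}$, so this term splits into a multiple of $\sum_i \int g''_{i,\varepsilon}(u_{x_i})\,|u_{x_j}|^{2s+2m}\,|\nabla\eta|^2\,dx$ plus a multiple of $Y$. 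Collecting the three contributions produces exactly \eqref{powerchiara}. The only delicate point is the algebraic choice of $\theta$ and the verification that it falls in $[0,1]$, so that Proposition \ref{prop:weird} is applicable and the factor $m^\theta$ remains under control; all the rest is a substitution followed by two applications of Young's inequality. This is precisely the scheme of \cite[Proposition 4.1]{BBLV}, whose proof never uses the explicit form of $g_{i,\varepsilon}$.
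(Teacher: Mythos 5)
Your proof is correct and follows exactly the route the paper intends: specialize Proposition \ref{prop:weird} to $\Phi(t)=t^{s-1}$, $\Psi(t)=t^m$ as dictated by \eqref{sceltechiare}, choose $\theta=\tfrac{m-s}{m-1}\in[0,1]$ so that both factors of the product term come out with the exponents $4s-2$, $2m-2s$ and $2s+2m$, then close with two applications of Young's inequality. This matches the proof of \cite[Proposition 4.1]{BBLV}, which is what the paper cites; the verification that $\theta\in[0,2]$, that $m^\theta\le m+1$, and the splitting $|u_{x_j}|^{2s}|u_{x_k}|^{2m}\le |u_{x_j}|^{2s+2m}+|u_{x_k}|^{2s+2m}$ are all exactly as needed.
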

By iterating a finite number of times the previous estimate, we get the following
\begin{prop}[Caccioppoli for power functions]
\label{prop-russian}
Take an exponent $q$ of the form
\[
q=2^{\ell_0}-1,\qquad \mbox{ for a given } \ell_0\in\mathbb{N}\setminus\{0\}.
\] 
Let $2\le p_1\le p_2\le \dots\le p_N$  and let \(\eta\in C^{\infty}_0(B)\). Then for every \(k=1, \dots, N\), we have
\begin{equation}
\label{russiancircles}
\begin{split}
\int \left|\nabla \left(|u_{x_k}|^{q+\frac{p_k-2}{2}}\,u_{x_k}\right)\right|^2\,\eta^2\,dx
&\le C\,q^5\,\sum_{i,j=1}^N\int g_{i,\varepsilon}''(u_{x_i})\,|u_{x_j}|^{2\,q+2}\,|\nabla \eta|^2\,dx\\
& + C\,q^5\,\sum_{i=1}^N\int g_{i,\varepsilon}''(u_{x_i})\,|u_{x_k}|^{2\,q+2}\,|\nabla \eta|^2\,dx,
\end{split}
\end{equation}
for some $C=C(N,p_k)>0$.
\end{prop}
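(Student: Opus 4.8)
The plan is to derive \eqref{russiancircles} by iterating the staircase estimate \eqref{powerchiara} of Proposition~\ref{stair} a number of times tuned to $\ell_0$ (this is exactly why the exponent is taken of the special form $q=2^{\ell_0}-1$), and then closing the iteration with the plain Caccioppoli inequality of Lemma~\ref{lm:standardcaccio}. First I would reduce the left-hand side to a Hessian integral. Setting $\beta=q+\frac{p_k-2}{2}$, a direct computation gives
\[
\left|\nabla\left(|u_{x_k}|^{q+\frac{p_k-2}{2}}\,u_{x_k}\right)\right|^2=(\beta+1)^2\,|u_{x_k}|^{2q+p_k-2}\,\sum_{i=1}^N u_{x_i x_k}^2 .
\]
By \eqref{gisecond} we have $g_{k,\varepsilon}''(u_{x_k})\ge (p_k-1)\,|u_{x_k}|^{p_k-2}$, and, using $u_{x_i x_k}=u_{x_k x_i}$, for each fixed $i$ the quantity $g_{k,\varepsilon}''(u_{x_k})\,u_{x_i x_k}^2$ is one of the non-negative summands of $\sum_{i'=1}^N g_{i',\varepsilon}''(u_{x_{i'}})\,u_{x_{i'} x_i}^2$. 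Hence
\[
\int \left|\nabla\left(|u_{x_k}|^{q+\frac{p_k-2}{2}}\,u_{x_k}\right)\right|^2\eta^2\,dx\le \frac{(\beta+1)^2}{p_k-1}\,\sum_{i=1}^N M_{i,k},\qquad M_{i,k}:=\sum_{i'=1}^N \int g_{i',\varepsilon}''(u_{x_{i'}})\,u_{x_{i'} x_i}^2\,|u_{x_k}|^{2q}\,\eta^2\,dx,
\]
and since $(\beta+1)^2\le C(p_k)\,q^2$, it suffices to bound each $M_{i,k}$ by $q^{-2}$ times the right-hand side of \eqref{russiancircles}.

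For the diagonal term $M_{k,k}$ I would apply Lemma~\ref{lm:standardcaccio} with the convex, $C^1$, non-negative choice $\Phi(t)=\frac{1}{q+1}\,(t^2+\kappa)^{\frac{q+1}{2}}$, and then let $\kappa\to 0$ (legitimate since $u\in C^2(B)$): since $|(\Phi(u_{x_k}))_{x_{i'}}|^2\to |u_{x_k}|^{2q}\,u_{x_k x_{i'}}^2$, inequality \eqref{mothergsob} gives
\[
M_{k,k}\le \frac{C}{(q+1)^2}\sum_{i'=1}^N\int g_{i',\varepsilon}''(u_{x_{i'}})\,|u_{x_k}|^{2q+2}\,|\nabla\eta|^2\,dx,
\]
which is already of the required form.

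For an off-diagonal index $i\ne k$, the point is that $M_{i,k}$ is exactly the left-hand side of \eqref{powerchiara} with $j=i$, $s=1$, $m=q$. I then iterate \eqref{powerchiara} along the parameter sequence
\[
(s_\ell,m_\ell)=\big(2^\ell,\; q-(2^\ell-1)\big),\qquad \ell=0,1,\dots,\ell_0-1,
\]
which stays in the admissible range $1\le s_\ell\le m_\ell$ precisely because $q=2^{\ell_0}-1$. At step $\ell$, the leftover Hessian term (the last term in \eqref{powerchiara}) with parameters $(s_\ell,m_\ell)$ becomes the leftover term with parameters $(s_{\ell+1},m_{\ell+1})$, plus contributions of the form $\sum_{i'}\int g_{i',\varepsilon}''(u_{x_{i'}})\,|u_{x_i}|^{2q+2}\,|\nabla\eta|^2$ and $(m_\ell+1)\sum_{i'}\int g_{i',\varepsilon}''(u_{x_{i'}})\,|u_{x_k}|^{2q+2}\,|\nabla\eta|^2$ (note $2s_\ell+2m_\ell=2q+2$ identically along the sequence). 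After $\ell_0$ steps one has $m_{\ell_0}=0$, so the final leftover term is $M_{i,i}$, controlled by Lemma~\ref{lm:standardcaccio} as in the diagonal case. Summing the $\ell_0\le q+1$ contributions and using $m_\ell+1\le q+1$ gives $M_{i,k}\le C\,q^2\big(\sum_{i',j}\int g_{i',\varepsilon}''(u_{x_{i'}})\,|u_{x_j}|^{2q+2}\,|\nabla\eta|^2\,dx+\sum_{i'}\int g_{i',\varepsilon}''(u_{x_{i'}})\,|u_{x_k}|^{2q+2}\,|\nabla\eta|^2\,dx\big)$. Inserting these bounds into the reduction of the first paragraph, summing over $i=1,\dots,N$, and collecting the powers of $q$ (the total being $\lesssim q^4$, hence $\le C\,q^5$), yields \eqref{russiancircles} with $C=C(N,p_k)$.

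The main obstacle is the combinatorial bookkeeping of the iteration: one must check that the parameter sequence $(s_\ell,m_\ell)$ remains in the range $1\le s\le m$ of Proposition~\ref{stair} at every step and terminates exactly at $m_{\ell_0}=0$ — this is what dictates $q=2^{\ell_0}-1$ — and one must ensure that the polynomial-in-$q$ losses coming from the $\ell_0\sim\log q$ applications of \eqref{powerchiara} add up rather than compound, so that a fixed power of $q$ controls everything. Everything else (the algebraic manipulations in the reduction step, the limit $\kappa\to 0$) is routine, given that $u\in C^2(B)$.
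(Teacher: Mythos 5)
Your proof is correct and follows essentially the same route as the paper's: iterate the staircase estimate \eqref{powerchiara} along the dyadic sequence $(s_\ell,m_\ell)=(2^\ell,\,q+1-2^\ell)$, close with Lemma~\ref{lm:standardcaccio}, and then use $g_{k,\varepsilon}''(u_{x_k})\ge(p_k-1)|u_{x_k}|^{p_k-2}$ to convert the Hessian-weighted integrals into the $L^2$ norm of $\nabla\bigl(|u_{x_k}|^{q+(p_k-2)/2}\,u_{x_k}\bigr)$. The only differences are presentational — you perform the reduction to Hessian integrals at the outset (the paper does it at the very end, after \eqref{eq_yoyo}) and you separate the diagonal case $i=k$ explicitly, which is a small clarity gain since the staircase iteration is vacuous there — but the key lemmas, the dyadic parameter bookkeeping, and the resulting power of $q$ are the same.
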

\begin{proof}
The proof is essentially the same as that of \cite[Proposition 4.2]{BBLV}.
We define the two finite families of indices $\{s_\ell\}$ and $\{m_\ell\}$ through
\[
s_\ell=2^\ell,\qquad m_{\ell}=q+1-2^{\ell},\qquad \ell\in\{0,\dots,\ell_0\}.
\]
By definition, we have
\[
1\le s_\ell\le m_\ell,\qquad \ell\in\{0,\dots,\ell_0-1\},
\]
\[
s_\ell+m_\ell=q+1,\qquad \ell\in\{0,\dots,\ell_0\},
\]
\[
4\,s_\ell-2=2\,s_{\ell+1}-2,\qquad 2\,m_\ell-2\,s_\ell=2\,m_{\ell+1},
\]
and
\[
s_0=1,\qquad m_0=q,\qquad s_{\ell_0}=2^{\ell_0},\qquad m_{\ell_0}=0. 
\]
From inequality \eqref{powerchiara}, we get for every $\ell\in\{0,\dots,\ell_0-1\}$,
\[
\begin{split}
\sum_{i=1}^N \int &g_{i,\varepsilon}''(u_{x_i})\,u_{x_i x_j}^2\,|u_{x_j}|^{2\,s_\ell-2}\,|u_{x_k}|^{2\,m_\ell}\,\,\eta^2\,dx\\
&\le C\,\sum_{i=1}^N \int g_{i,\varepsilon}''(u_{x_i})\,|u_{x_j}|^{2\,q+2}\,|\nabla \eta|^2\,dx\\
&+ C\,(m_\ell+1)\,\sum_{i=1}^N \int g_{i,\varepsilon}''(u_{x_i})\,| u_{x_k}|^{2\,q+2}\,|\nabla \eta|^2\,dx\\
&+\sum_{i=1}^N \int g_{i,\varepsilon}''(u_{x_i})\,u_{x_i x_j}^2\,|u_{x_j}|^{2\,s_{\ell+1}-2}\,|u_{x_k}|^{2\,m_{\ell+1}}\,\eta^2\,dx,
\end{split}
\]
for some $C>0$ universal.
By starting from $\ell=0$ and iterating the previous estimate up to $\ell=\ell_0-1$, we then get
\[
\begin{split}
\sum_{i=1}^N \int g_{i,\varepsilon}''(u_{x_i})\,u_{x_i x_j}^2\,|u_{x_k}|^{2\,q}\,\eta^2\,dx&\le C\,q^2\,\sum_{i=1}^N \int g_{i,\varepsilon}''(u_{x_i})\,|u_{x_j}|^{2\,q+2}\,|\nabla \eta|^2\,dx\\
&+ C\,q^2\,\sum_{i=1}^N\int g_{i,\varepsilon}''(u_{x_i})\,|u_{x_k}|^{2\,q+2}\,|\nabla \eta|^2\,dx\\
&+\sum_{i=1}^N \int g_{i,\varepsilon}''(u_{x_i})\,u_{x_i x_j}^2\,|u_{x_j}|^{2\,q}\,\eta^2\,dx,
\end{split}
\]
for a universal constant $C>0$.
For the last term, we apply the Caccioppoli inequality \eqref{mothergsob}  with
\[
\Phi(t)=\frac{|t|^{q+1}}{q+1},\qquad t\in\mathbb{R},
\]
thus we get
\[
\begin{split}
\sum_{i=1}^N \int g_{i,\varepsilon}''(u_{x_i})\,u_{x_i x_j}^2\,|u_{x_k}|^{2\,q}\,\eta^2\,dx
&\le C\,q^2\,\sum_{i=1}^N\int g_{i,\varepsilon}''(u_{x_i})\,|u_{x_j}|^{2\,q+2}\,|\nabla \eta|^2\,dx\\
& + C\,q^2\,\sum_{i=1}^N\int g_{i,\varepsilon}''(u_{x_i})\,|u_{x_k}|^{2\,q+2}\,|\nabla \eta|^2\,dx\\
&+\frac{C}{(q+1)^2}\,\sum_{i=1}^N\int g_{i,\varepsilon}''(u_{x_i})\,|u_{x_j}|^{2\,q+2}\,|\nabla \eta|^2\,dx;
\end{split}
\]
that is,
\begin{equation}
\label{eq_yoyo}
\begin{split}
\sum_{i=1}^N \int g_{i,\varepsilon}''(u_{x_i})\,u_{x_i x_j}^2\,|u_{x_k}|^{2\,q}\,\eta^2\,dx
&\le C\,q^2\,\sum_{i=1}^N\int g_{i,\varepsilon}''(u_{x_i})\,|u_{x_j}|^{2\,q+2}\,|\nabla \eta|^2\,dx\\
& + C\,q^2\,\sum_{i=1}^N\int g_{i,\varepsilon}''(u_{x_i})\,|u_{x_k}|^{2\,q+2}\,|\nabla \eta|^2\,dx,
\end{split}
\end{equation}
possibly for a different universal constant $C>0$.
\par
We now recall \eqref{gisecond}, thus we get
\[
\begin{split}
\sum_{i=1}^N \int g_{i,\varepsilon}''(u_{x_i})\,u_{x_i x_j}^2\,|u_{x_k}|^{2\,q}\,\eta^2\,dx&\ge \int |u_{x_k}|^{p_k-2}\,u_{x_k x_j}^2\,|u_{x_k}|^{2\,q}\,\eta^2\,dx\\
&=\left(\frac{2}{2\,q+p_k}\right)^2\,\int \left|\left(|u_{x_k}|^{q+\frac{p_k-2}{2}}\,u_{x_k}\right)_{x_j}\right|^2\,\eta^2\,dx.
\end{split}
\]
We can sum over $j=1,\dots,N$ to obtain
\[
\sum_{i,j=1}^N \int {g_{i,\varepsilon}''(u_{x_i})}\,u_{x_i x_j}^2\,|u_{x_k}|^{2\,q}\,\eta^2\,dx\ge \left(\frac{2}{2\,q+p_k}\right)^2\,\int \left|\nabla \left(|u_{x_k}|^{q+\frac{p_k-2}{2}}\,u_{x_k}\right)\right|^2\,\eta^2\,dx.
\]
This proves the desired inequality.
\end{proof}

\subsection{Towards higher integrability}

In order to prove the higher integrability of the gradient, we will need the following self-improving estimate. This is analogous to the estimate at the basis of \cite[Theorem 1.1]{BFZ}, which deals with the case $p_1=\dots=p_{N-1}<p_N$ only. As pointed out in the Introduction, our case will be much more involved.
\begin{prop}
\label{prop:BFZ}
For every $\alpha>-1$ and every $k=1,\dots,N$, there exists a constant $C=C(N,p_k,\alpha)>0$ such that for every pair of concentric balls $B_{r_0}\subset B_{R_0}\Subset B$, we have
\begin{equation}
\label{eq_start_algo}
\begin{split}
\int_{B_{r_0}}  | u_{x_k}|^{p_k+2+\alpha}\,dx &\leq C\,R_0^N\,\left(\left(\frac{\|u\|_{L^\infty(B)}}{R_0-r_0}\right)^{p_k+2+\alpha}+\varepsilon_0\right)\\
&+C\,\left(\frac{\|u\|_{L^\infty(B)}}{R_0-r_0}\right)^{\frac{2}{p_k}\,(p_k+2+\alpha)}\,\int_{B_{R_0}} \sum_{i\not= k} |u_{x_i}|^{\frac{p_i-2}{p_k}\,(p_k+2+\alpha)}\,dx.
\end{split}
\end{equation}
\end{prop}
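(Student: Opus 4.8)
The plan is to follow the integration-by-parts strategy of Choe and Bildhauer--Fuchs--Zhong, adapted to the strongly degenerate orthotropic setting. Fix $k$ and $\alpha>-1$. The starting point is the equation \eqref{regolareg} satisfied by $u=u_\varepsilon$, tested against a function that exploits the boundedness of $u$. The natural test function is
\[
\varphi = u\,|u_{x_k}|^{p_k-2}\,u_{x_k}\,|u_{x_k}|^{\alpha}\,\eta^2,
\]
where $\eta\in C^\infty_0(B_{R_0})$ is a cutoff with $\eta\equiv 1$ on $B_{r_0}$ and $|\nabla\eta|\le C/(R_0-r_0)$. (To make this rigorous one should first replace $|u_{x_k}|^{\alpha}$ by $(\kappa+u_{x_k}^2)^{\alpha/2}$, carry out the computation, and let $\kappa\to 0$ at the end using $u\in C^2(B)$ and monotone/dominated convergence, exactly as in the proof of Lemma~\ref{lm:negativepower}.)

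The key step is to compute $\varphi_{x_i}$ and plug into \eqref{regolareg}. Differentiating produces four types of terms: (i) the term where the derivative hits $u$, which for $i=k$ gives $g'_{k,\varepsilon}(u_{x_k})\,u_{x_k}\,|u_{x_k}|^{\alpha}\,\eta^2\gtrsim |u_{x_k}|^{p_k+\alpha}\eta^2$ and for $i\ne k$ gives $g'_{i,\varepsilon}(u_{x_i})\,u_{x_i}\,|u_{x_k}|^{p_k-2+\alpha}\,u_{x_k}\,\eta^2$; (ii) the terms where the derivative hits $|u_{x_k}|^{p_k-2+\alpha}u_{x_k}$, which carry a second derivative $u_{x_kx_i}$ and have a favorable sign for $i=k$ (this is the degenerate-ellipticity gain, analogous to the mechanism in Lemma~\ref{lm:negativepower}) and are error terms for $i\ne k$; (iii) the terms where the derivative hits $\eta^2$, which carry $\eta\,\eta_{x_i}$. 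Isolating the good term $\int |u_{x_k}|^{p_k+\alpha}\,\eta^2\,dx$ on the left (it comes with the factor $u$, hence the appearance of $\|u\|_{L^\infty}$), one wants an estimate
\[
\int_{B_{r_0}} |u_{x_k}|^{p_k+\alpha}\,dx \;\lesssim\; \|u\|_{L^\infty(B)}\Big(\text{Hessian terms}+\text{boundary/cutoff terms}\Big).
\]
To turn $|u_{x_k}|^{p_k+\alpha}$ into $|u_{x_k}|^{p_k+2+\alpha}$ one must reinject: after controlling the Hessian terms via Young's inequality and absorbing them using the Caccioppoli inequality of Lemma~\ref{lm:negativepower} (with the exponent $\alpha$ there matched to $p_k-2+\alpha$, which is admissible once one checks $p_k-2+\alpha>-1$, i.e. after a preliminary reduction or by first proving the estimate for $\alpha$ in a bounded range and bootstrapping), one is left with integrals of the form $\int g''_{i,\varepsilon}(u_{x_i})\,|u_{x_k}|^{p_k-2+\alpha+2}\,|\nabla\eta|^2\,dx$. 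For $i=k$ this is $\int |u_{x_k}|^{2p_k-4+\alpha+2}|\nabla\eta|^2$, which after a further Young inequality splits into the desired $\int_{B_{r_0}}|u_{x_k}|^{p_k+2+\alpha}$ (absorbed on the left with a small constant) plus lower-order terms controlled by $R_0^N(\|u\|_{L^\infty}/(R_0-r_0))^{p_k+2+\alpha}$; for $i\ne k$ it is $\int |u_{x_i}|^{p_i-2}|u_{x_k}|^{p_k+\alpha}|\nabla\eta|^2$, and applying Young's inequality with exponents $\frac{p_k+2+\alpha}{p_k+\alpha}$ and its conjugate splits this into an absorbable piece and exactly the mixed term $\big(\|u\|_{L^\infty}/(R_0-r_0)\big)^{\frac{2}{p_k}(p_k+2+\alpha)}\int_{B_{R_0}}|u_{x_i}|^{\frac{p_i-2}{p_k}(p_k+2+\alpha)}$ appearing on the right-hand side of \eqref{eq_start_algo}. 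The $\varepsilon_0$ term comes from the $\varepsilon\,t^2$ part of $g_{i,\varepsilon}$ in these estimates, using $\varepsilon\le\varepsilon_0$.

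The main obstacle I expect is bookkeeping the exponents so that all the Young-inequality splittings land precisely on $|u_{x_k}|^{p_k+2+\alpha}$ (absorbable) and on the stated mixed powers (no worse), \emph{uniformly in} $\varepsilon$ and with constants depending only on $N,p_k,\alpha$ — in particular verifying that the coupling between the good first-order term (which only gives the power $p_k+\alpha$, two less than wanted) and the cutoff terms (which upon reinjection do supply the missing two powers, but only via the Hessian–Caccioppoli detour) closes without any hidden restriction on $p_N/p_1$. A secondary technical point is the regularization in $\kappa$ when $\alpha<0$, so that the Hessian terms $u_{x_kx_i}^2|u_{x_k}|^{\alpha}$ are a priori finite; this is handled exactly as in Lemma~\ref{lm:negativepower}. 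Once the $\kappa$-regularized inequality is in hand with $\vartheta<1$ absorption constant, one concludes by the iteration Lemma~\ref{lm:giusti} (or simply by choosing the cutoff once) and lets $\kappa\to 0$.
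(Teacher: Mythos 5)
Your plan has the right ingredients (boundedness of $u$, Caccioppoli via Lemmas \ref{lm:standardcaccio}--\ref{lm:negativepower}, Young, the iteration Lemma \ref{lm:giusti}), but the concrete first step is where it goes wrong, and the paper's actual argument sidesteps the issues you run into. Two problems with your setup.

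First, the test function you wrote, $\varphi = u\,|u_{x_k}|^{p_k-2+\alpha}u_{x_k}\,\eta^2$, does \emph{not} produce a sign-definite leading term when inserted in \eqref{regolareg}. The part of $\varphi_{x_i}$ where the derivative hits $u$ contributes
\[
\int \Bigl(\textstyle\sum_i g'_{i,\varepsilon}(u_{x_i})\,u_{x_i}\Bigr)\,|u_{x_k}|^{p_k-2+\alpha}u_{x_k}\,\eta^2\,dx,
\]
and while $\sum_i g'_{i,\varepsilon}(u_{x_i})u_{x_i}\ge 0$, the factor $|u_{x_k}|^{p_k-2+\alpha}u_{x_k}$ is odd in $u_{x_k}$, so the integrand has no sign and cannot be isolated on the left. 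Your item (i) writes the candidate good term as $g'_{k,\varepsilon}(u_{x_k})u_{x_k}|u_{x_k}|^\alpha\eta^2$, which would arise from the \emph{different} test function $u|u_{x_k}|^\alpha\eta^2$, not from the $\varphi$ you stated; this is an internal inconsistency. Second, even accepting your accounting, the ``good'' term you extract carries the power $p_k+\alpha$ rather than the target $p_k+2+\alpha$, and you yourself flag the missing two powers as the point you ``expect'' to be an obstacle --- in other words the scheme does not visibly close.

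The paper avoids both issues with a simpler move: it does \emph{not} test the equation at this stage. It starts from the manifestly nonnegative quantity $\int |u_{x_k}|^{p_k+2+\alpha}\eta^2\,dx$, writes it as $\int u_{x_k}\cdot\bigl(u_{x_k}|u_{x_k}|^{p_k+\alpha}\eta^2\bigr)\,dx$, and integrates by parts in $x_k$ only. This produces $-\int u\,\partial_{x_k}\bigl(u_{x_k}|u_{x_k}|^{p_k+\alpha}\eta^2\bigr)\,dx$, in which $u$ appears multiplicatively and only the single second derivative $u_{x_kx_k}$ occurs --- not the whole family $u_{x_kx_i}$, $i=1,\dots,N$, that your equation-testing would generate. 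After two Young inequalities absorbing the $|u_{x_k}|^{p_k+2+\alpha}$ piece, one is left with $\|u\|^2_{L^\infty}$ times a genuine Hessian integral $\int |u_{x_kx_k}|^2|u_{x_k}|^{p_k+\alpha-2}\eta^2\,dx$, which is bounded by $\int g''_{k,\varepsilon}(u_{x_k})u_{x_kx_k}^2|u_{x_k}|^\alpha\eta^2\,dx$ and then controlled via Lemma \ref{lm:standardcaccio} (for $\alpha>0$) or Lemma \ref{lm:negativepower} (for $-1<\alpha\le 0$). Only at this Caccioppoli step does the equation enter, through \eqref{derivatag}. A further Young inequality and Lemma \ref{lm:giusti} then yield exactly \eqref{eq_start_algo}, with the $\varepsilon_0$ contribution coming from the $\varepsilon t^2$ piece of $g_{i,\varepsilon}$ as you anticipated. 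You should replace your equation-testing first step with this direct integration by parts; the rest of your outline (the $\kappa$-regularization for negative $\alpha$, the two Young inequalities, the use of Lemmas \ref{lm:standardcaccio}, \ref{lm:negativepower} and \ref{lm:giusti}) then matches the paper.
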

\begin{proof}
We fix $k\in \{1, \dots, N\}$ and take \(\eta\in C^{\infty}_0(B)\) a positive cut-off function. For every \(\alpha >-1\), we  estimate  the quantity 
\[
\int | u_{x_k}|^{p_k+2+\alpha}\,\eta^2\,\,dx = \int u_{x_k} u_{x_k}|u_{x_k}|^{p_k+\alpha}\eta^2 \,dx.
\]
By integration by parts (recall that $u\in C^2(B)$), one gets
\[
\begin{split}
\int |u_{x_k}|^{p_k+2+\alpha}\,\eta^2\,dx &= - \int u\,\Big(u_{x_k}|u_{x_k}|^{p_k+\alpha}\,\eta^2\Big)_{x_k}\,dx \\
&=- (p_k+\alpha+1)\int u\, u_{x_k x_k}\,|u_{x_k}|^{p_k+\alpha}\eta^2\,dx - 2\,\int u\, u_{x_k}\,|u_{x_k}|^{p_k+\alpha}\eta\,\eta_{x_k}\,dx.
\end{split}
\]
Hence, we have
\begin{equation}\label{eq53}
\int | u_{x_k}|^{p_k+2+\alpha}\,\eta^2\,dx \leq (p_k+\alpha+1)\,\|u\|_{L^{\infty}(B)}\,\left(\int  |u_{x_k x_k}|\,|u_{x_k}|^{p_k+\alpha}\,\eta^2\,dx + \int |u_{x_k}|^{p_k+\alpha +1}\,\eta\, |\nabla \eta|\,dx\right).
\end{equation}
We now use the Young's inequality for the two terms in the right-hand side: for every \(\tau>0\), 
\[
|u_{x_k x_k}|\,|u_{x_k}|^{p_k+\alpha} \leq \tau\, |u_{x_k}|^{p_k+\alpha+2} + \frac{1}{4\,\tau}\, |u_{x_k}|^{p_k+\alpha-2}|\,u_{x_kx_k}|^2
\]
and 
\[
|u_{x_k}|^{p_k+\alpha +1}\,\eta \,|\nabla \eta|\leq \tau\, |u_{x_k}|^{p_k+\alpha+2}\, \eta^2 + \frac{1}{4\,\tau}\, |u_{x_k}|^{p_k+\alpha} |\nabla \eta|^2.
\]
In the first inequality, when \(p_k+\alpha-2<0\), the quantity \(|u_{x_k}|^{p_k+\alpha-2}|u_{x_kx_k}|^2\) is defined to be \(0\) on the set where \(u_{x_k}=0\).

Inserting these two inequalities into \eqref{eq53} and choosing 
\[
\tau=\frac{1}{4\,(p_k+\alpha+1)\,\|u\|_{L^{\infty}(B)}},
\] 
we can absorb the two terms multiplied by \(\tau\) in the left-hand side. This leads to
\[
\int \eta^2\, | u_{x_k}|^{p_k+2+\alpha}\,dx \leq C\, \|u\|_{L^{\infty}(B)}^2\,\left(\int  |u_{x_k x_k}|^2\,|u_{x_k}|^{p_k+\alpha-2}\,\eta^2 \,dx+ \int |u_{x_k}|^{p_k+\alpha}\, |\nabla \eta|^2\,dx\right),
\]
for a constant $C=C(p_k,\alpha)>0$.
Observe that 
\[
|u_{x_k x_k}|^2\,|u_{x_k}|^{p_k+\alpha-2}=|u_{x_kx_k}|^2\,|u_{x_k}|^{p_k-2}\,|u_{x_k}|^\alpha\le g_{k,\varepsilon}''(u_{x_k})\,|u_{x_kx_k}|^2\,|u_{x_k}|^\alpha.
\]
Thus, if $\alpha> 0$, we can apply the Caccioppoli inequality of Lemma \ref{lm:standardcaccio} with the convex function \(\Phi(t)=|t|^{\frac{\alpha}{2}+1}\). Otherwise, if $-1<\alpha\le 0$, we can apply Lemma \ref{lm:negativepower}. This gives
\[
\begin{split}
\int |u_{x_k x_k}|^2\,|u_{x_k}|^{p_k+\alpha-2}\,\eta^2 \,dx &\leq C\, \sum_{i=1}^{N}\int_{B} g_{i,\varepsilon}''(u_{x_i})\,|u_{x_k}|^{\alpha+2}\,|\eta_{x_i}|^2\,dx,
\end{split}
\]
and thus we obtain
\[
\begin{split}
\int \eta^2 \,| u_{x_k}|^{p_k+2+\alpha}\,dx &\leq C\,\|u\|_{L^{\infty}(B)}^2 \, \int  \left(|u_{x_k}|^{\alpha+2}\,\sum_{i=1}^{N}g_{i,\varepsilon}''(u_{x_i})+|u_{x_k}|^{p_k+\alpha} \right)|\nabla \eta|^2\,dx\\
&\le C\,\|u\|_{L^{\infty}(B)}^2 \, \int  \left(|u_{x_k}|^{\alpha+2}\,\sum_{i\not =k}|u_{x_i}|^{p_i-2}+|u_{x_k}|^{p_k+\alpha} +\varepsilon\,|u_{x_k}|^{\alpha+2}\right)|\nabla \eta|^2\,dx,
\end{split}
\]
where in the second inequality the constant $C$ may differ from the previous one. There we used \eqref{gisecond}.
\par
We now fix a pair concentric balls $B_r\subset B_R\Subset B$. Applying the above estimate to a non negative cut-off function \(\eta\in C^{\infty}_0(B_R)\) such that 
\[
\eta\equiv 1 \mbox{ on } B_r\qquad \mbox{ and }\quad \|\nabla \eta\|_{L^{\infty}(B_R)}\leq \frac{C}{R-r},
\] 
one gets
\begin{equation}
\label{eq86}
\begin{split}
\int_{B_r}  | u_{x_k}|^{p_k+2+\alpha}\,dx &\leq   \frac{C\,\|u\|_{L^{\infty}(B)}^2}{(R-r)^2}\int_{B_R}  \left(|u_{x_k}|^{\alpha+2}\sum_{i\not= k}|u_{x_i}|^{p_i-2} +   |u_{x_k}|^{p_k+\alpha}+\varepsilon\,|u_{x_k}|^{\alpha+2}\right)\,dx.
\end{split}
\end{equation}
We now want to absorb all the terms containing $u_{x_k}$ from the right-hand side.
Thus, we apply again the Young's inequality. For every \(\tau>0\), there exists \(C_0>0\) which depends only on \(N,p_k\) and \(\alpha\) such that
\[
|u_{x_k}|^{\alpha+2}\sum_{i\not= k}|u_{x_i}|^{p_i-2}\leq \tau\, |u_{x_k}|^{p_k+\alpha+2}+ \frac{C_0}{\tau^{\frac{\alpha+2}{p_k}}}\, \sum_{i\not= k}|u_{x_i}|^{(p_i-2)\,\frac{p_k+\alpha+2}{p_k}},
\]
and
\[
 |u_{x_k}|^{p_k+\alpha} \leq \tau\, |u_{x_k}|^{p_k+\alpha+2} + \frac{C_0}{\tau^{\frac{p_k+\alpha}{2}}}.
\]
Moreover, we use that 
\[
\varepsilon\,|u_{x_k}|^{\alpha+2}\le \varepsilon+ |u_{x_k}|^{p_k+\alpha+2}.
\]
thanks to the fact that $\varepsilon<1$ and $p_k\ge 2$.
Inserting these inequalities into \eqref{eq86} and choosing 
\[
\tau=\frac{(R-r)^2}{4\,C\,\|u\|^2_{L^{\infty}(B_R)}},
\] 
one obtains 
\[
\begin{split}
\int_{B_r}  | u_{x_k}|^{p_k+2+\alpha}\,dx &\leq \frac{1}{2}\int_{B_R}  | u_{x_k}|^{p_k+2+\alpha}\,dx\\
&+\frac{C\,\|u\|_{L^{\infty}(B_R)}^2}{(R-r)^2}\left(\frac{R^N}{\tau^{\frac{p_k+\alpha}{2}}}+\frac{1}{\tau^{\frac{\alpha+2}{p_k}}}\,\sum_{i\not= k}\int_{B_R}  |u_{x_i}|^{(p_i-2)\frac{p_k+2+\alpha}{p_k}}\,dx+\varepsilon\,R^N\right).
\end{split}
\]
By recalling the choice of $\tau$ above, this is the same as
\[
\begin{split}
\int_{B_r}  | u_{x_k}|^{p_k+2+\alpha}\,dx &\leq \frac{1}{2}\int_{B_R}  | u_{x_k}|^{p_k+2+\alpha}\,dx+C\,R^N\,\left(\left(\frac{\|u\|_{L^\infty(B)}}{R-r}\right)^{p_k+\alpha+2}+\varepsilon_0\right)\\
&+C\,\left(\frac{\|u\|_{L^\infty(B)}}{R-r}\right)^{2\,\frac{\alpha+2+p_k}{p_k}}\,\left(\int_{B_R} \sum_{i\not= k} |u_{x_i}|^{(p_i-2)\frac{p_k+2+\alpha}{p_k}}\,dx\right).
\end{split}
\]
We now fix $r_0<R_0$ as in the statement and use the previous estimate for $r_0\le r<R\le R_0$.
By applying Lemma \ref{lm:giusti}, one finally obtains that
\[
\begin{split}
\int_{B_{r_0}}  | u_{x_k}|^{p_k+2+\alpha}\,dx &\leq C\,R_0^N\,\left(\left(\frac{\|u\|_{L^\infty(B)}}{R_0-r_0}\right)^{p_k+\alpha+2}+\varepsilon_0\right)\\
&+C\,\left(\frac{\|u\|_{L^\infty(B)}}{R_0-r_0}\right)^{2\,\frac{p_k+\alpha+2}{p_k}}\,\left(\int_{B_{R_0}} \sum_{i\not= k} |u_{x_i}|^{(p_i-2)\frac{p_k+2+\alpha}{p_k}}\,dx\right).
\end{split}
\]
Here, the constant \(C\) depends on \(N,p_k\) and  \(\alpha\). This concludes the proof.
\end{proof}

\section{A Lipschitz estimate}
\label{sec:5}
\begin{prop}
\label{prop:a_priori_estimate}
Let $2\le p_1\le \dots\le p_N$ and $0<\varepsilon\le \varepsilon_0$. There exist an exponent $\gamma\ge p_N+2$, two exponents $\Theta,\beta>1$ and a constant $C>0$ such that for every $B_{r_0}\subset B_{R_0}\Subset B$ with \(0<r_0<R_0\le 1\), 
\begin{equation}
\label{lipschitz}
\|\nabla u\|_{L^\infty(B_{r_0})}\leq \frac{C}{(R_0-r_0)^{\beta}}\, \left(\int_{B_{R_0}} |\nabla u|^{\gamma}\,dx+1\right)^{\frac{\Theta}{\gamma}}.
\end{equation}
The parameters $\gamma,\beta,\Theta$ and the constant $C$ are independent of $\varepsilon$.
\end{prop}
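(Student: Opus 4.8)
\emph{Set-up and the basic reverse Hölder inequality.} Recall that we write $u$ for $u_\varepsilon$ and set $\mathcal U:=1+|\nabla u|^2\ge 1$. Since $u\in C^2(B)$ all the integrals below are finite for fixed $\varepsilon$, so the whole point is to obtain $\varepsilon$-independent constants. The plan is to run a Moser-type iteration on powers of $\mathcal U$, following \cite[Proposition~5.1]{BBLV}, the engine being the Caccioppoli inequality for power functions of the gradient (Proposition~\ref{prop-russian}). First I would turn \eqref{russiancircles} into a reverse Hölder inequality: in its left-hand side, $\bigl|\nabla\bigl(|u_{x_k}|^{q+\frac{p_k-2}{2}}u_{x_k}\bigr)\bigr|^2$ is the squared gradient of a $W^{1,2}_0$ function whose modulus is $|u_{x_k}|^{q+\frac{p_k}{2}}$; in its right-hand side I would use $g_{i,\varepsilon}''(t)=(p_i-1)|t|^{p_i-2}+\varepsilon\le C\,\mathcal U^{\frac{p_N-2}{2}}$ (exploiting $p_i\le p_N$ and $\varepsilon\le\varepsilon_0<1$, which is exactly where the $\varepsilon$-independence comes from) together with $|u_{x_j}|^{2q+2}\le\mathcal U^{q+1}$, so that the whole right-hand side of \eqref{russiancircles} is controlled by $C\,q^5\int\mathcal U^{q+\frac{p_N}{2}}|\nabla\eta|^2\,dx$. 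Adding the analogous lower-order term to account for $\int|\nabla(|u_{x_k}|^{q+\frac{p_k-2}{2}}u_{x_k}\,\eta)|^2$, applying the Sobolev inequality to this $W^{1,2}_0$ function, summing over $k=1,\dots,N$, using that the largest component of $\nabla u$ is comparable to $|\nabla u|$, and taking $\eta$ a cut-off between $B_r$ and $B_R$ with $|\nabla\eta|\le C/(R-r)$, one reaches, for every admissible exponent $q=2^{\ell_0}-1$, an inequality of the schematic form
\begin{equation*}
\left(\int_{B_r}\mathcal U^{(q+\frac{p_1}{2})\chi}\,dx\right)^{\frac1\chi}\le \frac{C(q)}{(R-r)^2}\int_{B_R}\mathcal U^{q+\frac{p_N}{2}}\,dx,
\end{equation*}
where $\chi=\tfrac{N}{N-2}$ if $N\ge3$, $\chi>1$ is arbitrary if $N=2$, and $C(q)$ depends on $q$, $N$, $\mathbf p$ and $\varepsilon_0$ only.

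\emph{The iteration.} I would then iterate this inequality along a decreasing sequence of radii $R_0=\rho_0>\rho_1>\cdots\downarrow r_0$ with $\rho_n-\rho_{n+1}\simeq(R_0-r_0)\,2^{-n}$, at step $n$ choosing the admissible exponent $q_n$ — possibly after a Hölder interpolation to land on an admissible value, as in \cite[Steps~1--4, Proposition~5.1]{BBLV} — so that the right-hand side only involves the norm already controlled at the previous step, the base being $q_0=2^{\ell_0}-1$ and $\gamma_0=q_0+\tfrac{p_N}{2}$ (so that $\gamma:=2\gamma_0=2q_0+p_N\ge p_N+2$), and the exponents $\gamma_n$ increasing to $+\infty$ with $\sum_n 1/\gamma_n<\infty$. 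Taking $L^{\gamma_n}$-roots in averaged form, using $\mathcal U\ge1$ to absorb lower-order terms, and telescoping, the constants collect into a convergent product and the radius factors into a prefactor $(R_0-r_0)^{-\beta}$ with $\beta>1$. The mechanism producing $\Theta>1$ is that, because $p_1<p_N$, the exponent linking two consecutive steps equals $\tfrac{2q_n+p_N}{2q_n+p_1}=1+\tfrac{p_N-p_1}{2q_n+p_1}>1$; the telescoped product $\Theta=\prod_n\bigl(1+\tfrac{p_N-p_1}{2q_n+p_1}\bigr)$ is finite — hence lies in $(1,\infty)$ — precisely because $\sum_n 1/q_n<\infty$. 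Letting $n\to\infty$ yields
\begin{equation*}
\|\nabla u\|_{L^\infty(B_{r_0})}\le\frac{C}{(R_0-r_0)^\beta}\left(\int_{B_{R_0}}|\nabla u|^\gamma\,dx+1\right)^{\frac\Theta\gamma},
\end{equation*}
with $\gamma\ge p_N+2$ and $C,\gamma,\Theta,\beta$ independent of $\varepsilon$.

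\emph{Main difficulty.} The real work lies in the combinatorics of the iteration. Unlike the standard-growth case of \cite{BBLV}, where the analogous iteration closes with exponent exactly $1/(p+2)$ and $\Theta=1$, here the mismatch $p_1<p_N$ on the two sides of the reverse Hölder inequality — together with the fact that Proposition~\ref{prop-russian} is only available for the special exponents $q=2^{\ell_0}-1$, which are not densely spaced — makes the recursion defining the exponents $\gamma_n$ considerably more delicate; one has to verify that, for a suitable (sufficiently large) choice of $\gamma\ge p_N+2$, the $\gamma_n$ still escape to $+\infty$ fast enough that $\sum 1/\gamma_n<\infty$ and the relevant infinite products converge, while controlling the polynomial weights $q_n^5$ and the shrinking radii. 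This is exactly the reason why the estimate fails to be scale invariant and why the exponent $\Theta>1$ appears; in particular one cannot recover the natural exponent by a homogeneity/interpolation trick as in \cite{BBLV}, which is precisely why the complementary higher integrability of $\nabla u$ (Proposition~\ref{prop:pierre}) will be needed.
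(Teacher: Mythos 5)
Your proposal follows essentially the same route as the paper: start from the Caccioppoli inequality for power functions of Proposition~\ref{prop-russian}, deduce via Sobolev a reverse H\"older inequality in which the exponent on the left carries $p_1$ while the one on the right carries $p_N$, and then run a Moser iteration in which the gap between consecutive exponents is bridged by Lebesgue interpolation, the mismatch $p_1<p_N$ being precisely what produces the finite product $\Theta=\prod(1+\varepsilon_j)>1$. The only cosmetic difference is that the paper works with $\mathcal{U}=\max_k|u_{x_k}|$ instead of $1+|\nabla u|^2$, and it spells out the ``filling the gaps'' step (interpolation, Young's inequality, and the hole-filling Lemma~\ref{lm:giusti}) that you correctly identify as the crux but leave schematic.
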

\begin{proof}
The proof is very similar to that of \cite[Theorem 5.1]{BBLV}, though some important technical modifications have to be taken into account.
For simplicity, we assume throughout the proof that $N\ge 3$, so in this case the Sobolev exponent $2^*$ is finite. Observe that the case $N=2$, which could be treated with minor modifications, is already contained in \cite[Theorem 1.4]{BLPV} (the proof there is different).
\par
As in \cite{BBLV}, we divide the proof into four steps. 
\vskip.2cm\noindent
{\bf Step 1: a first iterative scheme}. We can proceed as in \cite[Proposition 5.1, Step 1]{BBLV}
by replacing the term
\[
\int |\nabla \eta|^2\, |u_{x_k}|^{2\,q+p}\,dx,
\]
there, with the following one
\[
\int |\nabla \eta|^2\, |u_{x_k}|^{2\,q+p_k}\,dx.
\]
Then the relevant outcome is now
\begin{equation}
\label{pronti??}
\begin{split}
\left( \int \left(\sum_{k=1}^N |u_{x_k}|^{2\,q+p_k}\right)^{\frac{2^*}{2}}\,\eta^{2^*}\,dx\right)^{\frac{2}{2^*}}
&\leq C\,q^5 \sum_{i, k=1}^{N} \int g_{i,\varepsilon}''(u_{x_i})\,|u_{x_k}|^{2\,q+2}\, |\nabla \eta|^2\,dx\\
&+ C\, \int |\nabla \eta|^2\, \sum_{k=1}^{N}|u_{x_k}|^{2\,q+p_k} \,dx.
\end{split}
\end{equation}
We now introduce the function 
\[
\mathcal{U}(x):= \max_{k=1, \dots, N}|u_{x_k}(x)|,
\]
and observe that
\[
|u_{x_k}|^{2\,q+p_1}-1\le |u_{x_k}|^{2\,q+p_k}\le |u_{x_k}|^{2\,q+p_N}+1,\qquad \mbox{ for } k=1,\dots,N.
\]
This in turn gives
\[
\mathcal{U}^{2\,q+p_1}-1\leq \sum_{k=1}^{N}|u_{x_k}|^{2\,q+p_k} \leq N\, \mathcal{U}^{2\,q+p_N}+N.
\]
Also, we have that 
\[
g_{i,\varepsilon}''(u_{x_i})\, |u_{x_k}|^{2\,q+2}\leq C\,\mathcal{U}^{2\,q+p_N}+C,\qquad \mbox{ for every } 1\leq i, k \leq N.
\]
By further observing that
\[
\left(\mathcal{U}^{2\,q+p_1}\right)^\frac{2^*}{2}\le \left(1+\sum_{k=1}^{N}|u_{x_k}|^{2\,q+p_k}\right)^\frac{2^*}{2}\le C\,\left(1+\left(\sum_{k=1}^{N}|u_{x_k}|^{2\,q+p_k}\right)^\frac{2^*}{2}\right),
\]
we obtain from \eqref{pronti??}
\[
\left( \int \mathcal{U}^{\frac{2^*}{2}(2\,q+p_1)}\, \eta^{2^*}\right)^{\frac{2}{2^*}} \leq C\,q^5\, \int \mathcal{U}^{2\,q+p_N}|\nabla \eta|^2\,dx + C\,q^5\, \int |\nabla \eta|^2 \, dx+C\,q^5\,\left(\int \eta^{2^*}\,dx\right)^\frac{2}{2^*},
\]
for a possibly different $C=C(N,\mathbf{p})>1$. By using the Sobolev embedding $W^{1,2}_0(B)\hookrightarrow L^{2^*}(B)$
\[
\left(\int \eta^{2^*}\,dx\right)^\frac{2}{2^*}\le C\, \int |\nabla \eta|^2\,dx,
\]
thus the previous estimate leads to
\begin{equation}
\label{bo}
\left(\int \mathcal{U}^{\frac{2^*}{2}(2\,q+p_1)}\,\eta^{2^*}\,dx\right)^\frac{2}{2^*} 
\le C\, q^5 \,\int |\nabla \eta|^2\,\Big(\mathcal{U}^{2\,q+p_N}+1\Big)\,dx.
\end{equation}
We fix two concentric balls \(B_r\subset B_R \Subset B\), with $0<r<R\le 1$.  
Then for every pair of radius $r\le t<s\le R$ we take in \eqref{bo} a standard cut-off function
\begin{equation}
\label{eq_def_eta}
\eta\in C^\infty_0(B_s),\quad \eta\equiv 1\mbox{ on } B_t,\quad 0\le \eta\le 1,\quad \|\nabla \eta\|_{L^\infty}\le\frac{C}{s-t}.
\end{equation}
This yields
\begin{equation}
\label{pronti!}
\left(\int_{B_t} \mathcal{U}^{\frac{2^*}{2}(2\,q+p_1)}\,dx\right)^\frac{2}{2^*} 
\le C\, \frac{q^5}{(s-t)^2} \,\int_{B_s} \Big(\mathcal{U}^{2\,q+p_N}+1\Big)\,dx.
\end{equation}
We define the sequence of exponents
\[
\gamma_j=p_N+2^{j+2}-2,\qquad j\ge 0,
\]
and take in \eqref{pronti!} $q=2^{j+1}-1$. This gives for every $j\ge 0$,
\begin{equation}
\label{pronti!!}
\begin{split}
\left(\int_{B_{t}} \mathcal{U}^{\frac{2^*}{2}\,(\gamma_j+p_1-p_N)}\,dx\right)^\frac{2}{2^*}\le C\,\frac{2^{5\,j}}{(s-t)^2}\,\int_{B_{s}}\Big(\mathcal{U}^{\gamma_j}+1\Big)\,dx , 
\end{split}
\end{equation}
for a possibly different constant $C=C(N,\mathbf{p})>1$. Observe that we always have
\[
\gamma_j+p_1-p_N\ge 2^{j+2},\qquad j\in\mathbb{N},
\]
thanks to the definition of $\gamma_j$.
\vskip.2cm\noindent
{\bf Step 2: filling the gaps.} By using the definition of $\gamma_j$, it is not difficult to see that 
\[
\gamma_j<\frac{2^*}{2}\,(\gamma_j+p_1-p_N)\qquad \Longleftrightarrow\qquad j>\log_2\left(\frac{N-2}{2}\,(p_N-2)-\frac{N}{2}\,(p_1-2)\right)-2.
\]
Thus we introduce the starting index\footnote{We use the convention that 
\[
\log t=-\infty\qquad \mbox{ for } t\le 0.
\]
Observe that $j_0=0$ whenever
\[
\frac{N-2}{2}\,(p_N-2)-\frac{N}{2}\,(p_1-2)<4\qquad \mbox{ i.\,e. }\qquad p_N<2+\frac{N\,(p_1-2)+8}{N-2}.
\]}
\[
j_0=\min\left\{j\in\mathbb{N}\,:\, j>\log_2\left(\frac{N-2}{2}\,(p_N-2)-\frac{N}{2}\,(p_1-2)\right)-2\right\}.
\]
By definition, this entails that
\[
\gamma_{j-1}<\gamma_j<\frac{2^*}{2}\,(\gamma_j+p_1-p_N),\qquad \mbox{ for every } j\ge j_0+1.
\]
By interpolation in Lebesgue spaces, we obtain
\[
\int_{B_{t}} \mathcal{U}^{\gamma_j}\,dx\le \left(\int_{B_{t}} \mathcal{U}^{\gamma_{j-1}}\,dx\right)^\frac{\tau_j\,\gamma_j}{\gamma_{j-1}}\,\left(\int_{B_{t}} \mathcal{U}^{\frac{2^*}{2}\,(\gamma_{j}+p_1-p_N)}\,dx\right)^{\frac{2}{2^*}\,\frac{(1-\tau_j)\,\gamma_j}{\gamma_j+p_1-p_N}},
\]
where the interpolation exponent $0<\tau_j<1$ is given by
\[
\tau_j=\frac{\gamma_{j-1}}{\gamma_j}\,\frac{\dfrac{2^*}{2}\,(\gamma_j+p_1-p_N)-\gamma_j}{\dfrac{2^*}{2}\,(\gamma_j+p_1-p_N)-\gamma_{j-1}}.
\]
We now rely on \eqref{pronti!!} to get
\[
\begin{split}
\int_{B_{t}} \mathcal{U}^{\gamma_j}\,dx&\le \left(\int_{B_{t}} \mathcal{U}^{\gamma_{j-1}}\,dx\right)^\frac{\tau_j\,\gamma_j}{\gamma_{j-1}}\, \left(C\,\frac{2^{5\,j}}{(s-t)^2}\,\int_{B_{s}}\Big(\mathcal{U}^{\gamma_j}+1\Big)\,dx\right)^{\frac{(1-\tau_j)\,\gamma_j}{\gamma_j+p_1-p_N}}\\
&=\left[\left(C\,\frac{2^{5\,j}}{(s-t)^2}\right)^{\frac{1-\tau_j}{\tau_j}\,\frac{\gamma_j}{\gamma_j+p_1-p_N}}\,\left(\int_{B_{t}} \mathcal{U}^{\gamma_{j-1}}\,dx\right)^\frac{\gamma_j}{\gamma_{j-1}}\right]^{\tau_j}\, \left(\int_{B_{s}}\Big(\mathcal{U}^{\gamma_j}+1\Big)\,dx\right)^{\frac{(1-\tau_j)\,\gamma_j}{\gamma_j+p_1-p_N}}.
\end{split}
\]
By Young's inequality, for every $j\ge j_0+1$, we get
\begin{equation}
\label{suffering}
\begin{split}
\int_{B_{t}} \mathcal{U}^{\gamma_j}\,dx&\le \frac{(1-\tau_j)\,\gamma_j}{\gamma_j+p_1-p_N}\,\int_{B_{s}}\Big( \mathcal{U}^{\gamma_j}+1\Big)\,dx \\
&+ \frac{1}{\left(\dfrac{\gamma_j+p_1-p_N}{(1-\tau_j)\,\gamma_j}\right)'}\,\left(C\,\frac{2^{5\,j}}{(s-t)^2}\right)^{\frac{(1-\tau_j)\,\gamma_j}{\gamma_j+p_1-p_N}\,\left(\frac{\gamma_j+p_1-p_N}{(1-\tau_j)\,\gamma_j}\right)'}\,\left(\int_{B_{t}} \mathcal{U}^{\gamma_{j-1}}\,dx\right)^{\frac{\gamma_j}{\gamma_{j-1}}\,\tau_j\,\left(\frac{\gamma_j+p_1-p_N}{(1-\tau_j)\,\gamma_j}\right)'}.
\end{split}
\end{equation}
We also introduce the second index
\[
j_1=\min\left\{j\in\mathbb{N}\,:\, j>\log_2\left((N-2)\,(p_N-2)-N\,\left(p_1-2\right)\right)-2\right\}.
\]
If we finally set
\[
J=1+\max\{j_0,\,j_1\},
\]
then by Lemma \ref{lm:madonne} we know that
\begin{equation}
\label{mappazza}
0<C_1\le \frac{(1-\tau_j)\,\gamma_j}{\gamma_j+p_1-p_N}\le C_2<1,\qquad \mbox{ for every } j\ge J.
\end{equation}
This in turn implies that for every $j\ge J$
\[
\frac{1}{\left(\dfrac{\gamma_j+p_1-p_N}{(1-\tau_j)\,\gamma_j}\right)'}\le \frac{1}{\left(\dfrac{1}{C_1}\right)'}=1-C_1.
\]
Thus from \eqref{suffering}\, we get
\begin{equation}
\label{suffering2}
\begin{split}
\int_{B_{t}} \mathcal{U}^{\gamma_j}\,dx&\le C_2\,\int_{B_{s}}\Big( \mathcal{U}^{\gamma_j}+1\Big)\,dx \\
&+ (1-C_1)\,\left(C\,\frac{2^{5\,j}}{(s-t)^2}\right)^{\beta}\,\left(\int_{B_{t}} \mathcal{U}^{\gamma_{j-1}}\,dx\right)^{\frac{\gamma_j}{\gamma_{j-1}}\,\tau_j\,\left(\frac{\gamma_j+p_1-p_N}{(1-\tau_j)\,\gamma_j}\right)'}\\
\end{split}
\end{equation}
for some \(1<\beta<\infty\), depending on $N, p_1$ and $p_N$.
In the last inequality we also used that \(s\leq R\le 1\) and $C>1$, together with \eqref{mappazza}. Finally we set
\[
\varepsilon_j=\tau_j\,\left(\frac{\gamma_j+p_1-p_N}{(1-\tau_j)\,\gamma_j}\right)'-1,\qquad \mbox{ for } j\ge J,
\]
and rewrite \eqref{suffering2} as
\begin{equation}
\label{suffering3}
\begin{split}
\int_{B_{t}} \mathcal{U}^{\gamma_j}\,dx&\le C_2\,\int_{B_{s}}\mathcal{U}^{\gamma_j}\,dx \\
&+ (1-C_1)\,\left(C\,\frac{2^{5\,j}}{(s-t)^2}\right)^{\beta}\,\left(\int_{B_{t}} \mathcal{U}^{\gamma_{j-1}}\,dx\right)^{\frac{\gamma_j}{\gamma_{j-1}}\,(1+\varepsilon_j)}+C_2\,|B_R|,
\end{split}
\end{equation}
which holds for every $r\le s<t\le R$.
By applying Lemma \ref{lm:giusti} with
\[
Z(t)= \int_{B_{t}} \mathcal{U}^{\gamma_j}\,dx ,\qquad \alpha_0=2\, \beta, \qquad \mbox{ and }\qquad \vartheta=C_2,
\]
we finally obtain for every $j\ge J$,
\begin{equation}
\label{conj}
\int_{B_r} \mathcal{U}^{\gamma_j}\,dx\le C\,\left( 2^{5\,j\,\beta}\,(R-r)^{-2\,\beta}\,\left(\int_{B_R} \mathcal{U}^{\gamma_{j-1}}\,dx\right)^{\frac{\gamma_j}{\gamma_{j-1}}\,(1+\varepsilon_j)}+ 1\right),
\end{equation}
for some $C=C(N,p_1,p_N)>1$.
\vskip.2cm\noindent
{\bf Step 3: Moser's iteration.} We now iterate the previous estimate on a sequence of shrinking balls. We fix two radii $0<r<R\le 1$ and define the sequence 
\[
R_j=r+\frac{R-r}{2^{j-J}},\qquad j\ge J.
\]
We use \eqref{conj} with \(R_{j+1}<R_j\) in place of \(r<R\). 
Thus we get 
\begin{equation}
\label{scamone}
\int_{B_{R_{j+1}}} \mathcal{U}^{\gamma_j}\,dx
\le \,C\,\left(2^{7\,j\,\beta}\,(R-r)^{-2\,\beta}\left( \int_{B_{R_j}}\mathcal{U}^{\gamma_{j-1}}\,dx \right)^{\frac{\gamma_j}{\gamma_{j-1}}\,(1+\varepsilon_j)}+ 1\right)
\end{equation}
where the constant \(C>1\) depends on $N$ and $p_1,p_N$ only.
\par
We introduce the notation
\[
Y_j=\int_{B_{R_{j}}} \mathcal{U}^{\gamma_{j-1}}\,dx,
\]
thus \eqref{scamone} reads
\[
Y_{j+1} \le \,C\,\left(2^{7\,j\,\beta}\,(R-r)^{-2\,\beta}\,Y_{j}^{\frac{\gamma_j}{\gamma_{j-1}}\,(1+\varepsilon_j)}+ 1\right)
\le \left(C\,2^{7\,\beta}\,(R-r)^{-2\,\beta}\right)^j\,(Y_{j}+1)^{\frac{\gamma_j}{\gamma_{j-1}}\,(1+\varepsilon_j)}.
\]
Here, we have used again that $C>1$  and $R\le 1$, so that the term multiplying $Y_j$ is larger than $1$.
By iterating the previous estimate starting from $j=J$ and using some standard manipulations, we obtain
\[
\begin{split}
Y_{n+1}&\le \left(C\,2^{7\,\beta}\,(R-r)^{-2\,\beta}\right)^{n}\,(Y_n+1)^{\frac{\gamma_n}{\gamma_{n-1}}\,(1+\varepsilon_n)}\\
&\le \left(C\,2^{7\,\beta}\,(R-r)^{-2\,\beta}\right)^n\,\left(\left(C\,2^{7\,\beta}\,(R-r)^{-2\,\beta}\right)^{n-1}\,(Y_{n-1}+1)^{\frac{\gamma_{n-1}}{\gamma_{n-2}}\,(1+\varepsilon_{n-1})}+1\right)^{\frac{\gamma_n}{\gamma_{n-1}}\,(1+\varepsilon_n)}\\
&\le \left(C\,2^{7\,\beta}\,(R-r)^{-2\,\beta}\right)^n\,\left(2\,\left(C\,2^{7\,\beta}\,(R-r)^{-2\,\beta}\right)^{n-1}\,(Y_{n-1}+1)^{\frac{\gamma_{n-1}}{\gamma_{n-2}}\,(1+\varepsilon_{n-1})}\right)^{\frac{\gamma_n}{\gamma_{n-1}}\,(1+\varepsilon_n)}\\
&\le \dots\\
&\le \Big(2\,C\,2^{7\,\beta}\,(R-r)^{-2\,\beta}\Big)^{\sum\limits_{j=J}^{n}\left(j\,\frac{\gamma_n}{\gamma_{j}}\prod\limits_{k=j+1}^n(1+\varepsilon_k)\right)}\,\Big[Y_{J}+1\Big]^{\frac{\gamma_n}{\gamma_{J-1}}\,\prod\limits_{j=J}^n(1+\varepsilon_j)},
\end{split}
\]
where we used that $C\,2^{7\,\beta}\,(R-r)^{-2\,\beta}>1$. We now simply write $C$ in place of $2\,C\,2^{7\,\beta}$
and take the power $1/\gamma_n$ on both sides:
\[
\begin{split}
Y_{n+1}^\frac{1}{\gamma_n}&\le \Big(C\,(R-r)^{-2\,\beta}\Big)^{\sum\limits_{j=J}^{n}\frac{j}{\gamma_{j}}\prod\limits_{k=j+1}^n(1+\varepsilon_k)}
\,\Big[Y_J+1\Big]^\frac{\prod\limits_{j=J}^n(1+\varepsilon_j)}{\gamma_{J-1}}\\
&\leq \Big(C\,(R-r)^{-2\, \beta}\Big)^{\Theta\sum\limits_{j=J}^{n}\frac{j}{\gamma_{j}}}
\,\Big[Y_J+1\Big]^\frac{\Theta}{\gamma_{J-1}}.
\end{split}
\]
In the previous estimate, we set
\[
\Theta=\lim_{n\to\infty}\prod_{j=0}^n(1+\varepsilon_j),
\]
which is a finite number, thanks to Lemma \ref{lm:putain}.
We observe that \(\gamma_{j}\sim 2^{j+2} \) as \(j\) goes to \(\infty\). This implies the convergence of the series above and we thus get
\[
\|\mathcal{U}\|_{L^{\infty}(B_{r})} =  \lim_{n\to\infty}\left(\int_{B_{R_{n+1}}} \mathcal{U}^{\gamma_{n+1}}\,dx\right)^\frac{1}{\gamma_{n+1}} \leq C\,  (R-r)^{-\beta'}\,\left(\int_{B_{R}} \mathcal{U}^{\gamma_{J-1}}\,dx+1\right)^\frac{\Theta}{\gamma_{J-1}},
\]
for some $C=C(N,p_1, p_N)>{ 1}$ and $\beta'=\beta'(N,p_1, p_N)>0$. 
By recalling the definition of \(\mathcal{U}\), we finally obtain
\[
\|\nabla u\|_{L^{\infty}(B_{r})} \leq C\,(R-r)^{-\beta'}\, \left(\int_{B_{R}} |\nabla u|^{\gamma_{J-1}}\,dx+1\right)^{\frac{\Theta}{\gamma_{J-1}}}.
\]
This concludes the proof.
\end{proof}

\section{A recursive gain of integrability for the gradient} 
\label{sec:6}

The main outcome of estimate \eqref{lipschitz} is the following: assume that our local minimizer $U$ has a gradient with a sufficiently high integrability, then one would be able to conclude that $\nabla U$ has to be bounded. 
We notice that since the explicit determination of the exponent $\gamma$ in \eqref{lipschitz} is actually very intricate (unless some upper bounds  on $p_N/p_1$ are imposed), we essentially  need to prove that
\[
\nabla U\in L^q_{\rm loc}\qquad \mbox{ for every }q<\infty,
\]
in order to be on the safe side. Thus, in order to infer the desired local Lipschitz regularity on $U$, we are going to prove a higher integrability estimate on $\nabla u_\varepsilon$, which is uniform with respect to $0<\varepsilon\le\varepsilon_0$. This is the content of the result of this section. Remember that  we simplify the notation $u_\varepsilon$ and replace it by $u$.
\begin{prop}
\label{prop:pierre}
Let $2\le p_1\le \dots\le p_N<+\infty$ and $0<\varepsilon\le \varepsilon_0$. For every $2\le q_0<+\infty$ and every $B_{R_0}\Subset B$, there exists a constant $C>0$ such that
\[
\sum_{i=1}^N \int_{B_{R_0}} |u_{x_i}|^{p_i\,q_0}\,dx\le C.
\]
The constant $C$ depends on $N,\mathbf{p},q_0, R_0, \mathrm{dist}(B_{R_0},\partial B)$,
\[
\|u\|_{L^\infty(B)}\qquad \mbox{ and }\qquad \sum_{i=1}^N \int_B |u_{x_i}|^{p_i}\,dx.
\]
\end{prop}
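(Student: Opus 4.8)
The plan is to run the cyclic self-improvement scheme announced in the Introduction, using Proposition~\ref{prop:BFZ} as the only engine. First I would record the two uniform-in-$\varepsilon$ ingredients that feed the right-hand side of \eqref{eq_start_algo}: by Lemma~\ref{lm:below} the quantity $\sum_{i}\int_B|u_{x_i}|^{p_i}\,dx$ is bounded by a constant depending only on $U$ and $B$, and by Proposition~\ref{prop:Linfty} one has $\|u\|_{L^\infty(B)}\le\|U\|_{L^\infty(2\,B)}$. Following the footnote, I would treat the case $p_i>2$ for all $i$ first, so that $q_i:=p_i/(p_i-2)>1$ are fixed constants; the presence of some $p_i=2$ requires only cosmetic changes, since then the exponent $(p_i-2)/p_k$ vanishes and that component drops out of the right-hand side of \eqref{eq_start_algo}.

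The iteration is organized along a finite chain of concentric balls $B_{R_0}=B^{(M)}\subset B^{(M-1)}\subset\cdots\subset B^{(0)}$ with $B^{(0)}\Subset B$ and consecutive radii separated by an amount comparable to $\mathrm{dist}(B_{R_0},\partial B)/M$, where the number $M$ of moves is finite and determined only by $\mathbf p$ and $q_0$. At stage $\ell$ I would carry the inductive information $\int_{B^{(\ell)}}|u_{x_i}|^{p_i\lambda_i^{(\ell)}}\,dx\le C_\ell$ for a vector of exponents $\lambda^{(\ell)}\in[1,\infty)^N$ with $\lambda^{(0)}=(1,\dots,1)$, the base case being exactly the energy bound. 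The elementary move from stage $\ell$ to $\ell+1$ fixes an index $k$, selects $\alpha=\alpha_k^{(\ell)}>-1$ as large as allowed by the requirement $\tfrac{p_i-2}{p_k}(p_k+2+\alpha)\le p_i\lambda_i^{(\ell)}$ for all $i\neq k$ --- that is, $p_k+2+\alpha_k^{(\ell)}=p_k\min_{i\neq k}\lambda_i^{(\ell)}q_i$ whenever this number exceeds $p_k+1$, and $\alpha$ infinitesimally above $-1$ otherwise --- and then applies Proposition~\ref{prop:BFZ} on the pair $B^{(\ell+1)}\subset B^{(\ell)}$. Feeding the stage-$\ell$ bounds into the sum on the right-hand side of \eqref{eq_start_algo} (using $|u_{x_i}|^{\frac{p_i-2}{p_k}(p_k+2+\alpha)}\le 1+|u_{x_i}|^{p_i\lambda_i^{(\ell)}}$) yields $\int_{B^{(\ell+1)}}|u_{x_k}|^{p_k+2+\alpha_k^{(\ell)}}\,dx\le C_{\ell+1}$, i.e. $\lambda_k^{(\ell+1)}=\min_{i\neq k}\lambda_i^{(\ell)}q_i$, $\lambda_i^{(\ell+1)}=\lambda_i^{(\ell)}$ for $i\neq k$, with $C_{\ell+1}$ finite and independent of $\varepsilon$.

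The scheduling of the indices $k$ follows the Introduction: one processes the components from $N$ downwards, first lifting $u_{x_N}$ with the remaining components frozen at level $1$, then cyclically applying the move to the already-activated components $\{N-j+1,\dots,N\}$ until their exponents stabilize, then activating the next component, and so on, finishing with a last round over all of $\{1,\dots,N\}$ to reach the prescribed level $q_0$. The point that makes this legitimate --- and the main obstacle --- is purely arithmetic, and it is where the two technical appendices are used: one must verify that within each cyclic phase the quantity $\min_{k}\lambda_k$ over the active indices is multiplied, after each full cycle, by a factor bounded below by $\min_i q_i>1$, so that the phase terminates after a number of loops depending only on $\mathbf p$, and that the procedure never requires an unattainable exponent, hence needs \emph{no} upper bound on $p_N/p_1$ --- this is precisely because each $q_i$ is a fixed constant strictly larger than $1$, which only uses $p_i\ge 2$. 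Once the combinatorial analysis guarantees $M<\infty$ and $\lambda_i^{(M)}\ge q_0$ for every $i$, summing the stage-$M$ estimates gives the claim, and tracking the constants $C_\ell$ through the finitely many moves produces a final $C$ with the stated dependence on $N,\mathbf p,q_0,R_0,\mathrm{dist}(B_{R_0},\partial B)$, $\|u\|_{L^\infty(B)}$ and $\sum_i\int_B|u_{x_i}|^{p_i}\,dx$.
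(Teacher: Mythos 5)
Your proposal takes essentially the same route as the paper: the only engine is Proposition~\ref{prop:BFZ}, the update rule $\lambda_k\mapsto\min_{i\ne k}\lambda_i\,q_i$ (with $q_i=\min\{p_i/(p_i-2),q_0\}$) is exactly what the paper encodes in the vector sequence $\beta^{(\ell)}_i=p_i\lambda_i^{(\ell)}$, the scheduling --- activate $u_{x_N}$ first, then cycle through the already-activated indices until saturation, then activate the next component --- is the paper's double induction (downward in $j$, then in $\ell$ over sweeps), and your key combinatorial claim that $\min_k\lambda_k$ is multiplied by $q_N=\min_i q_i>1$ per cycle until it saturates at the cap is precisely what the paper proves in Lemmas~\ref{lm-beta-increasing} and~\ref{lm:limitbetai}.

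One local imprecision is worth flagging. Your fallback \emph{``$\alpha$ infinitesimally above $-1$ otherwise''} does not actually work: if $p_k\min_{i\ne k}\lambda_i^{(\ell)}q_i\le p_k+1$, then every admissible $\alpha$ satisfying the integrability constraint $\frac{p_i-2}{p_k}(p_k+2+\alpha)\le p_i\lambda_i^{(\ell)}$ is $\le -1$, so taking $\alpha$ slightly above $-1$ would make the right-hand side of \eqref{eq_start_algo} genuinely out of reach rather than salvage the move. The resolution in the paper is that this degenerate case \emph{never occurs}: the careful choice of starting exponents $\beta^{(0)}_i$ and the monotonicity of $\{\beta^{(\ell)}_i\}$ guarantee $\beta^{(\ell)}_i\ge p_i+2$ for every $\ell$, i.e.\ $\alpha\ge0$ at every move (and even $\alpha>-1$ is verified separately for the initialization at $k=N$ using $p_{N-1}\le p_N$). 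So the hedge should be replaced by a proof that it is unnecessary, which is exactly what the appendix supplies. Aside from this, your sketch identifies the right structure and the right reason the scheme needs no bound on $p_N/p_1$.
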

\begin{proof}
We proceed to exploit the scheme of Proposition \ref{prop:BFZ}. In what follows, we use the convention that
\[
\frac{p}{p-2}=+\infty,
\]
whenever $p=2$. We can also assume without loss of generality that 
\[
p_N>2,
\]
otherwise $p_1=\dots=p_N=2$ and in this case the regularity theory for our problem is well-established ($U$ would be a  harmonic function in such a situation).
\par
We fix $q_0$ as in the statement and introduce the exponents 
\begin{equation}
\label{qi}
q_j=\min\left\{\frac{p_j}{p_j-2},q_0\right\}=\min \left\{\left(\frac{p_j}{2}\right)',q_0\right\},\qquad j=1,\dots,N.
\end{equation}
Since \(p_1\leq\dots \leq  p_N\), we get that \(q_1\geq q_2 \geq \dots \geq q_N\) and thus
\begin{equation}
\label{ordineqi}
\min_{i=1,\dots,k} q_i = q_{k},\qquad \mbox{ for every } k\in\{1,\dots,N\}.
\end{equation}
We now prove by {\it downward induction} on \(j=N, \dots, 1\) that the following fact holds:
for every $B_R\Subset B$, one has
\begin{equation}
\label{provalo!}
\begin{array}{c}
\mbox{ for every $j\in\{1,\dots,N\}$}\\
\mbox{and every $B_R\Subset B$,}
\end{array}
\qquad \sum_{i=j}^N\int_{B_R} |u_{x_i}|^{p_i\,q_{j-1}}\,dx\le C,\qquad \mbox{ with $C>0$ independent of }\varepsilon.
\end{equation}
In particular, for $j=1$, \eqref{provalo!} implies
\[
\sum_{i=1}^N\int_{B_R} |u_{x_i}|^{p_i\,q_{0}}\,dx\le C,
\] 
for a uniform constant $C>0$. The statement on the quality of the constant $C$ will be clear from the computations below.
\vskip.2cm\noindent
{\bf Initialization step}. We start from $j=N$. We observe that in this case the right-hand side of \eqref{eq_start_algo} (written for \(k=N\))
is uniformly bounded with respect to $\varepsilon>0$, provided that \(\alpha>-1\) is chosen in such a way  that for every \(i\in \{1,\dots,N-1\}\), one has
\[
(p_i-2)\,\frac{p_N+2+\alpha}{p_N}\leq p_i.
\]
If $p_i=2$, this is automatically satisfied. Otherwise, this is equivalent to
\[
p_N+2+\alpha \leq p_N\,\frac{p_i}{p_i-2}.
\]
We define \(\alpha\) by 
\[
p_N+2+\alpha = p_N\, q_{N-1}.
\]
By definition of \(q_{N-1}\),
\[
p_N+2+\alpha = p_N\, \min \left\{\frac{p_{N-1}}{p_{N-1}-2}, q_0 \right\} \leq p_N\, \min_{1\leq i \leq N-1} \frac{p_i}{p_i -2},
\]
as desired.
We need to check that \(\alpha>-1\), or equivalently 
\begin{equation}\label{eq1235}
p_N\, \min \left\{\frac{p_{N-1}}{p_{N-1}-2} , q_0 \right\} >p_N+1.
\end{equation}
Since \(q_0\geq 2\), one has \(p_N\, q_0 >p_N+1\). Moreover, using that \(p_{N-1}\leq p_N\), one gets \(2\,p_{N}>p_{N-1}-2\) which in turn is equivalent to 
\[
p_N \frac{p_{N-1}}{p_{N-1}-2} >p_N+1.
\] 
This proves \eqref{eq1235}. 
\par
Thus for every $B_R\Subset B_{R'}\Subset B$, it follows from \eqref{eq_start_algo} that
\[
\begin{split}
\int_{B_R}  | u_{x_N}|^{p_N\,q_{N-1}}\,dx &\leq C\,(R')^N\,\left(\left(\frac{\|u\|_{L^\infty(B)}}{R'-R}\right)^{p_N\,q_{N-1}}+\varepsilon_0\right)\\
&+C\,\left(\frac{\|u\|_{L^\infty(B)}}{R'-R}\right)^{2\,q_{N-1}}\,\int_{B_{R'}} \sum_{i=1}^{N-1} |u_{x_i}|^{\frac{p_i-2}{p_N}\,(p_N\,q_{N-1})}\,dx.
\end{split}
\]
Since \(q_i\geq q_{N-1}\) for $1\le i\le N-1$, one has 
\[
\frac{p_i}{q_{N-1}\,(p_i-2)}\geq 1.
\] 
Using H\"older's inequality for each term of the sum of the right-hand side, with the exponent \(p_i/(q_{N-1}(p_i-2))\) and its conjugate, one gets
\[
\begin{split}
\int_{B_R}  | u_{x_N}|^{p_N\,q_{N-1}}\,dx &\leq
 C\,(R')^N\,\left(\left(\frac{\|u\|_{L^\infty(B)}}{R'-R}\right)^{p_N\,q_{N-1}}+\varepsilon_0\right)\\
&+C\,\left(\frac{\|u\|_{L^\infty(B)}}{R'-R}\right)^{2\,q_{N-1}}\,\sum_{i=1}^{N-1}\left(\int_{B_{R'}} |u_{x_i}|^{p_i}\,dx\right)^\frac{(p_i-2)\,q_{N-1}}{p_i}.
\end{split}
\]
By using Lemma \ref{lm:below} and Proposition \ref{prop:Linfty} in order to control the two terms on the right-hand side, we get a uniform (in $\varepsilon$) control on the $L^{p_N\,q_{N-1}}(B_R)$ norm of $u_{x_N}$.
This finally establishes the initialization step \(j=N\), i.e. 
\[
\mbox{for every $B_R\Subset B,$ we have }
\qquad \int_{B_R} |u_{x_N}|^{p_N\,q_{N-1}}\,dx\le C,\qquad \mbox{ with $C>0$ independent of }\varepsilon.
\]
\vskip.2cm\noindent
{\bf Inductive step}. We then assume that the assertion \eqref{provalo!} is true for some \(j\in \{2, \dots, N\}\) and we prove it for \(j-1\). By the induction assumption, we thus know that 
\begin{equation}
\label{eq146}
\begin{array}{c}
\mbox{ for some $j\in\{2,\dots,N\}$}\\
\mbox{and every $B_R\Subset B$,  }
\end{array}
\qquad \sum_{i=j}^N\int_{B_R} |u_{x_i}|^{p_i\,q_{j-1}}\,dx\le C,\qquad \mbox{ with $C>0$ independent of }\varepsilon.
\end{equation}
In the rest of the proof, we establish that \eqref{eq146} implies
\begin{equation}
\label{eq-aim}
\begin{array}{c}
\mbox{for every $B_R\Subset B$, }
\end{array}
\qquad \sum_{i=j-1}^N\int_{B_R} |u_{x_i}|^{p_i\,q_{j-2}}\,dx\le C,\qquad \mbox{ with $C>0$ independent of }\varepsilon.
\end{equation}
In order to prove this, as explained in the Introduction, we need to employ a {\it multiply iterative scheme} based on Proposition \ref{prop:BFZ}.
More specifically, we start by relying on \eqref{eq_start_algo} with the choices
\[
k=j-1\qquad \mbox{ and }\qquad p_{j-1}+2+\alpha= p_{j-1}\, \min \left\{\min_{0\le i\le j-2}q_i ,\, q_{j-1}\,\min_{j\le i\le N}q_i\right\}.
\]
We first justify the fact that such a choice for \(\alpha\) is feasible. Observe that
\[
\min_{0\le i\le j-2} q_{i}=q_{j-2} \qquad \mbox{ and } \qquad  \min_{j\le i\le N}q_i = q_N,
\]
hence the condition on $\alpha$ is equivalent to
\begin{equation}\label{eq149+}
p_{j-1}+2+\alpha = p_{j-1} \min \left\{q_{j-2},\, q_{j-1}\,q_N \right\}.
\end{equation}
Since 
\[
q_{j-1}=\min \left\{\frac{p_{j-1}}{p_{j-1}-2},q_0\right\} \mbox{ with } q_0\geq 2,
\] 
one has \(p_{j-1}+2 \le p_{j-1}\,q_{j-1}\). By recalling that the exponents $q_j$ are non increasing and larger than $1$, this implies that
\[
p_{j-1}+2\le p_{j-1}\, q_{j-2} \qquad \mbox{ and } \qquad p_{j-1}+2 <p_{j-1}\,q_{j-1}\, q_N,
\]
and thus 
\[
p_{j-1}+2+\alpha =p_{j-1}\,\min \{q_{j-2},\,q_{j-1}\,q_N\}\ge p_{j-1}+2.
\]
This implies that \(\alpha \geq 0\) as desired. 

We next rely on the fact that by Lemma \ref{lm:below} and Proposition \ref{prop:Linfty}, we have
\[
\|u\|_{L^\infty(B)}+\sum_{i=1}^{N}\int_B |u_{x_i}|^{p_i}\,dx\le C,
\] 
with a constant $C>0$ independent of $\varepsilon>0$ 
and on the induction assumption \eqref{eq146}, which gives a local uniform (in $\varepsilon$) control on
\[
\sum_{i=j}^N\int_{B_R} |u_{x_i}|^{p_i\,q_{j-1}}\,dx,
\]
for $B_R\Subset B$. Hence, the definition \eqref{eq149+} of \(\alpha\) ensures that the right-hand side of \eqref{eq_start_algo} is uniformly bounded. Thus from Proposition \ref{prop:BFZ}, we get that for every $B_R\Subset B$ we have
\begin{equation}
\label{eq178}
\int_{B_R} |u_{x_{j-1}}|^{\beta_{j-1}^{(0)}}\,dx\le C,
\end{equation}
with $C>0$ independent of $\varepsilon$. Here, the exponent $\beta_{j-1}^{(0)}$ is given by 
\[
\beta_{j-1}^{(0)}= p_{j-1}+2+\alpha =p_{j-1} \min \left\{q_{j-2},\, q_{j-1}q_N \right\}.
\]
We can summarize the previous integrability information as the following estimate: for every $B_R\Subset B,$
\begin{equation}\label{eq184}
\sum_{i=j-1}^N \int_{B_R} |u_{x_i}|^{\beta_{i}^{(0)}}\,dx\le C,
\end{equation}
with $C>0$ independent of $\varepsilon>0$ and
\begin{equation}
\label{boh}
\left\{\begin{array}{ccl}
\beta_{j-1}^{(0)}&=&p_{j-1} \min \left\{q_{j-2},\, q_{j-1}\,q_N \right\},\\
&&\\
\beta_{i}^{(0)}&=& p_i\, q_{j-1},\qquad \mbox{ for } i=j,\dots,N.\\
\end{array}
\right.
\end{equation}
We proceed to define by induction a vector sequence 
\[
\left(\beta_{j-1}^{(\ell)},\dots,\beta_N^{(\ell)}\right),\qquad \ell\in\mathbb{N},
\] 
as follows: for $\ell=0$ this is given by \eqref{boh} and then we use the following {\it multiply recursive} scheme
\begin{equation}\label{eq-def-beta}
\left\{\begin{array}{ccl}
\beta_N^{(\ell+1)}&=&p_N\,\displaystyle\min\left\{q_{j-2},\, \min_{j-1\le k\le N-1}\frac{\beta^{(\ell)}_k}{p_k-2}\right\}\\
\beta_{N-1}^{(\ell+1)}&=&p_{N-1}\,\displaystyle\min\left\{q_{j-2},\, \min_{j-1\le k\le N-2}\frac{\beta^{(\ell)}_k}{p_k-2},\, \frac{\beta^{(\ell+1)}_N}{p_N-2}\right\}\\
\beta_{N-2}^{(\ell+1)}&=&p_{N-2}\,\displaystyle\min\left\{q_{j-2},\, \min_{j-1\le k\le N-3}\frac{\beta^{(\ell)}_k}{p_k-2},\, \min_{N-1\le k\le N}\frac{\beta^{(\ell+1)}_k}{p_k-2}\right\}\\
\vdots & = &\vdots\\
\beta_{j-1}^{(\ell+1)}&=&p_{j-1}\,\displaystyle\min\left\{q_{j-2},\, \min_{j\le k\le N}\frac{\beta^{(\ell+1)}_k}{p_k-2}\right\}.
\end{array}
\right.
\end{equation}
We first observe that this scheme is well-defined, since each $\beta^{(\ell+1)}_i$ is determined either by $(\beta_{j-1}^{(\ell)},\dots,\beta^{(\ell)}_N)$ or by an updated information on the  $\beta^{(\ell+1)}_k$, with $k\ge i+1$.
Moreover, thanks to Lemma \ref{lm-beta-increasing} and Lemma \ref{lm:limitbetai} below, we have that
\[
\left\{\beta_{i}^{(\ell)}\right\}_{\ell\in \mathbb{N}}\qquad \mbox{ is nondecreasing, for every } j-1\le i\le N,
\]
and there exists $\ell_0\in\mathbb{N}$ such that 
\begin{equation}
\label{argh}
\mbox{ for every } \ell\ge \ell_0,\qquad \beta_i^{(\ell)}=p_i\,q_{j-2},\qquad \mbox{ for } i=j-1,\dots,N.
\end{equation}
With these definitions at hand, we now prove that 
\begin{equation}
\label{last}
\mbox{ for every } B_R\Subset B,\quad \sum_{i=j-1}^N\int_{B_R} |u_{x_i}|^{\beta_i^{(\ell)}}\,dx\le C,\ \mbox{ for every }\ell\in\mathbb{N},\qquad \mbox{ with $C>0$ independent of }\varepsilon.
\end{equation}
By taking into account \eqref{argh}, this will eventually establish \eqref{eq-aim}, thus concluding the proof.
\vskip.2cm\noindent
In turn, the proof of \eqref{last}  relies on an  induction argument. The assertion \eqref{last} is true for \(\ell=0\), thanks to \eqref{eq184}. 
\par
We now assume \eqref{last} to hold for some $\ell\in\mathbb{N}$ and establish the same for $\ell+1$, i.e. 
\[
\mbox{ for every }B_R\Subset B,  \qquad \sum_{i=j-1}^N \int_{B_R} |u_{x_i}|^{\beta_{i}^{(\ell+1)}}\,dx\le C,
\]
with $C$ independent of $\varepsilon$. Actually, by a downward induction on \(m=N, \dots, j-1\), we prove that 
\[
\sum_{i=m}^N \int_{B_R} |u_{x_i}|^{\beta_{i}^{(\ell+1)}}\,dx\le C,\qquad \mbox{ with $C$ independent of $\varepsilon$}.
\]
For the initialization step \(m=N\), we apply \eqref{eq_start_algo} with \(k=N\) and for  the following choice of \(\alpha\):
\[
p_N+2+\alpha=p_N\,\min\left\{q_{j-2},\,\min_{j-1\le k\le N-1}\frac{\beta^{(\ell)}_k}{p_k-2}\right\}=\beta^{(\ell+1)}_N.
\]
In order to justify  that  \(\alpha\) defined as such is non negative, we rely on the fact that for every \(i\in \{j-1, \dots, N\}\), the sequence \(\{\beta_{i}^{(\ell)}\}_{\ell\in \mathbb{N}}\) is nondecreasing. This implies that 
\[
\alpha \geq \beta_{N}^{(0)}-(p_N+2) = p_N\, q_{j-1} -(p_N+2) \geq p_N\, q_{N} -(p_N+2)\geq 0.
\]
Hence,  such a choice of $\alpha$ is feasible.
\par
We get that for every $B_R\Subset B_{R'}\Subset B$, 
\[
\begin{split}
\int_{B_{R}}  | u_{x_N}|^{\beta^{(\ell+1)}_N}\,dx &\leq C\,(R')^N\,\left(\left(\frac{\|u\|_{L^\infty(B)}}{R'-R}\right)^{\beta^{(\ell+1)}_N}+\varepsilon_0\right)\\
&+C\,\left(\frac{\|u\|_{L^\infty(B)}}{R'-R}\right)^{\frac{2}{p_N}\,\beta^{(\ell+1)}_N}\,\int_{B_{R'}} \sum_{i=1}^{j-2} |u_{x_i}|^{(p_i-2)\,\frac{\beta^{(\ell+1)}_N}{p_N}}\,dx\\
&+C\,\left(\frac{\|u\|_{L^\infty(B)}}{R'-R}\right)^{\frac{2}{p_N}\,\beta^{(\ell+1)}_N}\,\int_{B_{R'}} \sum_{i=j-1}^{N-1} |u_{x_i}|^{\frac{p_i-2}{p_N}\,\beta^{(\ell+1)}_N}\,dx.
\end{split}
\]
For the terms in the first sum of the right hand side, we use H\"older's inequality with the exponent 
\[
\frac{p_i}{p_i-2}\,\frac{p_N}{\beta^{(\ell+1)}_N},
\] 
and its conjugate\footnote{Observe that for $p_i=2$ there is no need of H\"older's inequality. If $p_i>2$, one can easily check that these exponents are larger than \(1\) by observing that 
\[
\beta_{N}^{(\ell+1)}\leq p_N\, q_{j-2}\leq p_N\, q_i,\qquad \mbox{ for } i\le j-2.
\]}. In the second sum, we use H\"older's inequality with the exponent\footnote{As before, there is no need of H\"older's inequality for $p_i=2$. For $p_i>2$, we rely on the fact that by definition
\[
\beta_{N}^{(\ell+1)}\leq \beta_{i}^{(\ell)}\,\frac{p_N}{p_i-2},\qquad \mbox{ for } j-1\le i\le N-1.
\] 
This justifies that the exponent is larger than \(1\).} 
\[
\frac{p_N}{p_i-2}\frac{\beta^{(\ell)}_i}{\beta^{(\ell+1)}_N}.
\] 
One gets
\[
\begin{split}
\int_{B_{R}}  | u_{x_N}|^{\beta^{(\ell+1)}_N}\,dx
&\le C\,(R')^N\,\left(\left(\frac{\|u\|_{L^\infty(B)}}{R'-R}\right)^{\beta^{(\ell+1)}_N}+\varepsilon_0\right)\\
&+C\,\left(\frac{\|u\|_{L^\infty(B)}}{R'-R}\right)^{2\,\frac{\beta^{(\ell+1)}_N}{p_N}}\,\sum_{i=1}^{j-2}\left(\int_{B_{R'}} |u_{x_i}|^{p_i}\,dx\right)^{\frac{p_i-2}{p_i}\,\frac{\beta^{(\ell+1)}_N}{p_N}}\\
&+C\,\left(\frac{\|u\|_{L^\infty(B)}}{R'-R}\right)^{2\,\frac{\beta^{(\ell+1)}_N}{p_N}}\,\sum_{i=j-1}^{N-1}\left(\int_{B_{R'}}  |u_{x_i}|^{\beta^{(\ell)}_i}\,dx\right)^{\frac{p_i-2}{p_N}\frac{\beta^{(\ell+1)}_N}{\beta^{(\ell)}_i}}.
\end{split}
\]
By using the induction assumption \eqref{last} to control the last term,   Lemma \ref{lm:below} and Proposition \ref{prop:Linfty} in order to control the other two, uniformly in $\varepsilon$, we get the desired estimate for $u_{x_N}$.
\par
We now assume that for some $m\in \{j,\dots,N\}$, we have 
\begin{equation}\label{sub-induction-assump}
\mbox{ for every }B_R\Subset B,\qquad \sum_{i=m}^N \int_{B_R} |u_{x_i}|^{\beta_{i}^{(\ell+1)}}\,dx\le C,\qquad \mbox{ with $C>0$ independent of $\varepsilon$},
\end{equation}
and prove that this entails
\[
\mbox{ for every }B_R\Subset B,\qquad \sum_{i=m-1}^N \int_{B_R} |u_{x_i}|^{\beta_{i}^{(\ell+1)}}\,dx\le C,\qquad \mbox{ with $C>0$ independent of $\varepsilon$}.
\]
Obviously, we only need to improve the control on the last component of the gradient, i.e. on $u_{_{x_{m-1}}}$. We still rely on Proposition \ref{prop:BFZ}, this time with the choices
\[
k=m-1\qquad \mbox{ and }\qquad p_{m-1}+2+\alpha=p_{m-1}\,\displaystyle\min\left\{q_{j-2},\, \min_{j-1\le i\le m-2}\frac{\beta^{(\ell)}_i}{p_i-2},\, \min_{m\le i\le N}\frac{\beta^{(\ell+1)}_i}{p_i-2}\right\}=\beta^{(\ell+1)}_{m-1}.
\]
The fact that \(\beta^{(\ell+1)}_{m-1} \geq \beta^{(0)}_{m-1}\geq p_{m-1}+2\) ensures that \(\alpha\geq 0\). Hence,  for every $B_R\Subset B_{R'}\Subset B,$ 
\[
\begin{split}
\int_{B_{R}}  | u_{x_{m-1}}|^{\beta^{(\ell+1)}_{m-1}}\,dx &\leq C\,(R')^N\,\left(\left(\frac{\|u\|_{L^\infty(B)}}{R'-R}\right)^{\beta^{(\ell+1)}_{m-1}}+\varepsilon_0\right)\\
&+C\,\left(\frac{\|u\|_{L^\infty(B)}}{R'-R}\right)^{\frac{2}{p_{m-1}}\,\beta^{(\ell+1)}_{m-1}}\,\int_{B_{R'}} \sum_{i=1}^{j-2} |u_{x_i}|^{(p_i-2)\,\frac{\beta^{(\ell+1)}_{m-1}}{p_{m-1}}}\,dx\\
&+C\,\left(\frac{\|u\|_{L^\infty(B)}}{R'-R}\right)^{\frac{2\,}{p_{m-1}}\beta^{(\ell+1)}_{m-1}}\,\int_{B_{R'}} \sum_{i=j-1}^{m-2} |u_{x_i}|^{(p_i-2)\,\frac{\beta^{(\ell+1)}_{m-1}}{p_{m-1}}}\,dx\\
&+C\,\left(\frac{\|u\|_{L^\infty(B)}}{R'-R}\right)^{\frac{2}{p_{m-1}}\,\beta^{(\ell+1)}_{m-1}}\,\int_{B_{R'}} \sum_{i=m}^{N} |u_{x_i}|^{(p_i-2)\,\frac{\beta^{(\ell+1)}_{m-1}}{p_{m-1}}}\,dx.
\end{split}
\]
We now proceed as above: we control the last term by using the induction assumption \eqref{sub-induction-assump} and the fact that if 
\[
\frac{\beta_{m-1}^{(\ell+1)}}{p_{m-1}}\,(p_i-2)\leq \beta_{i}^{(\ell+1)},\qquad \mbox{ if } m\le i\le N. 
\] 
The third term is estimated thanks to the induction assumption \eqref{last} and the inequality\footnote{This part of the discussion is void when $m=j$.} 
\[
\frac{\beta_{m-1}^{(\ell+1)}}{p_{m-1}}\,(p_i-2)\leq \beta_{i}^{(\ell)},\qquad \mbox{ if } j-1\leq i \leq m-2.
\] 
Finally,  on the two first terms, we use Lemma \ref{lm:below} and Proposition \ref{prop:Linfty}, and also that \footnote{This part of the discussion is void when $j=2$.}
\[
\frac{\beta_{m-1}^{(\ell+1)}}{p_{m-1}}\,(p_i-2)\leq p_i,\qquad \mbox{ if } 1\leq i \leq j-2.
\] 
This finally establishes that 
\[
\mbox{ for every }B_R\Subset B,\qquad \sum_{i=j-1}^N \int_{B_R} |u_{x_i}|^{\beta_{i}^{(\ell+1)}}\,dx\le C,
\]
with $C$ independent of $\varepsilon$. As already explained, this is enough to safely conclude the proof.
\end{proof}

\section{Proof of Theorem \ref{teo:lipschitz}}
\label{sec:7}

The cornerstones of the proof of Theorem \ref{teo:lipschitz} are the uniform $L^\infty$ estimate for the gradient of Proposition \ref{prop:a_priori_estimate} and the uniform higher integrability estimate of Proposition \ref{prop:pierre}. Indeed, by using Proposition \ref{prop:pierre} with the choice $q_0=\gamma$ (i.e. the exponent in \eqref{lipschitz}), we get that for every $B_{r_0}\Subset B$ with $r_0<1$
\[
\|\nabla u_\varepsilon\|_{L^\infty(B_{r_0})}\le C,
\]
with $C>0$ independent of $\varepsilon$. Observe that to infer that $C$ is independent of $\varepsilon$, we 
use Lemma \ref{lm:below} and Proposition \ref{prop:Linfty}.
Once we have this uniform estimate at our disposal, the Lipschitz regularity of $U$ follows with a standard covering argument, by taking into account that $u_\varepsilon$ converges to $U$ (see Lemma \ref{lm:convergence}). We refer to the proof of \cite[Theorem A]{BBJ} for  details.\hfill $\square$
\vskip.2cm
Once we have Theorem \ref{teo:lipschitz} at our disposal, we can prove a higher differentiability result {\it \`a la} Uhlenbeck. For the model case of the functional 
\[
\sum_{i=1}^N \frac{1}{p_i}\,\int |u_{x_i}|^{p_i}\,dx,
\]
the following result considerably improves \cite[Theorem 1.1]{BLPV}.
\begin{coro}
Let  $\mathbf{p}=(p_1,\dots,p_N)$ be such that $2\le p_1\le \dots\le p_N$.
Let $U\in W^{1,\mathbf{p}}_{\rm loc}(\Omega)$ be a local minimizer of $\mathfrak{F}_{\mathbf{p}}$ such that 
\[
U \in L^\infty_{\rm loc}(\Omega).
\]
Then 
\[
|U_{x_i}|^\frac{p_i-2}{2}\,U_{x_i} \in W^{1,2}_{\rm loc}(\Omega),\qquad \mbox{ for } i=1,\dots,N.
\]
\end{coro}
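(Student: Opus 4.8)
The plan is to run the approximation scheme of Section~\ref{sec:preliminaries}, to establish a second--order estimate for $u=u_\varepsilon$ which is \emph{uniform in $\varepsilon$}, and then to let $\varepsilon\to 0$. Two ingredients are already at our disposal. On the one hand, by the proof of Theorem~\ref{teo:lipschitz} (that is, by combining Proposition~\ref{prop:a_priori_estimate}, Proposition~\ref{prop:pierre}, Lemma~\ref{lm:below} and Proposition~\ref{prop:Linfty}), for every $B'\Subset B$ there is a constant $C>0$, independent of $\varepsilon$, with $\|\nabla u_\varepsilon\|_{L^\infty(B')}\le C$. On the other hand, by Lemma~\ref{lm:convergence}, $u_{\varepsilon,x_i}\to U_{x_i}$ in $L^{p_i}(B)$, hence, along a subsequence, almost everywhere on $B$.

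The heart of the proof is the uniform Caccioppoli--type estimate
\[
\sum_{i=1}^N\int |u_{x_i}|^{p_i-2}\,|\nabla u_{x_i}|^2\,\eta^2\,dx\le C,\qquad \eta\in C^\infty_0(B),
\]
where $C$ depends only on $N$, $\mathbf p$, $\eta$ and $\|\nabla u_\varepsilon\|_{L^\infty(\operatorname{supp}\eta)}$, but not on $\varepsilon$. To prove it, I would use, for each fixed $j\in\{1,\dots,N\}$, the Caccioppoli inequality of Lemma~\ref{lm:negativepower} with $\alpha=0$ (which is just the standard Caccioppoli inequality for $u_{x_j}$), namely
\[
\sum_{i=1}^N\int g''_{i,\varepsilon}(u_{x_i})\,u_{x_i x_j}^2\,\eta^2\,dx\le C\sum_{i=1}^N\int g''_{i,\varepsilon}(u_{x_i})\,u_{x_j}^2\,|\nabla\eta|^2\,dx.
\]
By \eqref{gisecond} one has $g''_{i,\varepsilon}(t)=(p_i-1)|t|^{p_i-2}+\varepsilon$, so that, using $\varepsilon<1$ and the uniform Lipschitz bound, the right--hand side is bounded independently of $\varepsilon$, while on the left--hand side one keeps only $g''_{i,\varepsilon}(u_{x_i})\ge (p_1-1)\,|u_{x_i}|^{p_i-2}$. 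The key point is then to \emph{sum these $N$ inequalities over $j$ and to interchange the finite, nonnegative double sum}; this gives $\sum_i\int |u_{x_i}|^{p_i-2}\big(\sum_j u_{x_i x_j}^2\big)\eta^2\,dx\le C$, which is exactly the displayed estimate. Since $t\mapsto|t|^{\frac{p_i-2}{2}}t$ is of class $C^1$ for $p_i\ge 2$, with $\nabla\big(|u_{x_i}|^{\frac{p_i-2}{2}}u_{x_i}\big)=\tfrac{p_i}{2}\,|u_{x_i}|^{\frac{p_i-2}{2}}\,\nabla u_{x_i}$, the estimate can be rewritten as $\sum_i\int\big|\nabla\big(|u_{x_i}|^{\frac{p_i-2}{2}}u_{x_i}\big)\big|^2\,\eta^2\,dx\le C$.

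It remains to let $\varepsilon\to0$. Fixing $B'\Subset B$ and $\eta\equiv1$ on $B'$, the estimate above, together with the uniform $L^\infty$ bound on $u_{\varepsilon,x_i}$ (which controls $\big\||u_{\varepsilon,x_i}|^{\frac{p_i-2}{2}}u_{\varepsilon,x_i}\big\|_{L^2(B')}$), shows that $\big\{|u_{\varepsilon,x_i}|^{\frac{p_i-2}{2}}u_{\varepsilon,x_i}\big\}_\varepsilon$ is bounded in $W^{1,2}(B')$. Along a subsequence it converges weakly in $W^{1,2}(B')$; by the almost everywhere convergence $u_{\varepsilon,x_i}\to U_{x_i}$ and dominated convergence (using the uniform $L^\infty$ bound) it also converges in $L^2(B')$ to $|U_{x_i}|^{\frac{p_i-2}{2}}U_{x_i}$, which therefore belongs to $W^{1,2}(B')$. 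A standard covering argument gives the conclusion on the whole of $\Omega$. The uniform estimate is elementary once the (already proven) uniform Lipschitz bound for $u_\varepsilon$ is invoked (the only device being to sum over the differentiation index $j$ before discarding the weights $g''_{i,\varepsilon}$), so the only slightly delicate point is the limit passage, that is, the identification of the weak $W^{1,2}$ limit of the regularized quantities with $|U_{x_i}|^{\frac{p_i-2}{2}}U_{x_i}$.
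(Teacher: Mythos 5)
Your argument is correct, but it takes a genuinely different route from the paper's. The paper proves the corollary \emph{directly on the minimizer $U$}, via Nirenberg's incremental quotients: it quotes the estimate from \cite[equation (3.6)]{BLPV}
\[
\sum_{i=1}^N \int_{B_{r_0}} \left|\frac{\delta_{h\mathbf{e}_j }\left(|U_{x_i}|^\frac{p_i-2}{2}\,U_{x_i}\right)}{|h|^{(s_j+1)/2}}\right|^2dx\le \frac{C}{(R_0-r_0)^2}\,\sum_{i=1}^N \left(\int_{B_{R_0}} |U_{x_i}|^{p_i}\,dx\right)^\frac{p_i-2}{p_i}\left(\int_{B_{R_0}} \left|\frac{\delta_{h\mathbf{e}_j} U}{|h|^{(s_j+1)/2}}\right|^{p_i}\right)^\frac{2}{p_i},
\]
then uses the freshly proved bound $\nabla U\in L^\infty_{\rm loc}$ to take $s_j=1$, so that the right--hand side is controlled uniformly in $|h|\ll 1$, and concludes by the difference--quotient characterization of $W^{1,2}$. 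You instead work on the smooth approximants $u_\varepsilon$, get a uniform second--order energy bound from the Caccioppoli inequality of Lemma~\ref{lm:standardcaccio} (with $\Phi(t)=t$) after summing over the differentiation index $j$ and discarding the extra $\varepsilon$--term in $g''_{i,\varepsilon}$, and then pass to the limit. Both proofs are legitimate and both rely on the same key input, namely the local Lipschitz bound from Theorem~\ref{teo:lipschitz}. The trade--off is clear: the paper's route avoids any limit passage but needs the finite--difference machinery of \cite{BLPV}; your route is self--contained once the regularization scheme of Section~\ref{sec:preliminaries} is set up, at the cost of the (mild but not free) identification of the weak $W^{1,2}$ limit, which you handle correctly via the a.e.\ convergence of $\nabla u_\varepsilon$ (Lemma~\ref{lm:convergence}), the uniform $L^\infty$ bound, dominated convergence and weak lower semicontinuity. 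One point worth making explicit in a final write--up: the uniform Caccioppoli estimate, and hence the conclusion, is first obtained on a single ball $B$ with $2\,B\Subset\Omega$, and the global statement on $\Omega$ then follows by the standard covering argument you invoke at the end.
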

\begin{proof}
The proof is the same as the one in \cite[Proposition 3.2]{BLPV}. It is based on Nirenberg's method of incremental quotients, which aims at differentiating the equation
\[
\sum_{i=1}^N (|U_{x_i}|^{p_i-2}\,U_{x_i})=0,
\]
in a discrete sense. By proceeding as in \cite{BLPV}, we get for every $j=1,\dots,N$ and every pair of concentric balls $B_{r_0}\Subset B_{R_0}\Subset \Omega$
\[
\sum_{i=1}^N \int_{B_{r_0}} \left|\frac{\delta_{h\mathbf{e}_j }\left(|U_{x_i}|^\frac{p_i-2}{2}\,U_{x_i}\right)}{|h|^\frac{s_j+1}{2}}\right|^2\,dx\le \frac{C}{(R_0-r_0)^2}\,\sum_{i=1}^N \left(\int_{B_{R_0}} |U_{x_i}|^{p_i}\,dx\right)^\frac{p_i-2}{p_i}\,\left(\int_{B_{R_0}} \left|\frac{\delta_{h\mathbf{e}_j} U}{|h|^\frac{s_j+1}{2}}\right|^{p_i}\right)^\frac{2}{p_i},
\]
see \cite[equation (3.6)]{BLPV}. By using that $\nabla U\in L^\infty_{\rm loc}$, we can choose
\[
s_j=1\qquad \mbox{ so that }\qquad \frac{s_j+1}{2}=1,
\]
to control the last term on the right-hand side and obtain an estimate on 
\[
\sum_{i=1}^N \int_{B_{r_0}} \left|\frac{\delta_{h\mathbf{e}_j }\left(|U_{x_i}|^\frac{p_i-2}{2}\,U_{x_i}\right)}{|h|}\right|^2\,dx,\qquad j=1,\dots,N,
\]
which is uniform in $|h|\ll 1$. By appealing to the difference quotient characterization of Sobolev spaces, we get the conclusion.
\end{proof}

\appendix

\section{Calculus lemmas}
In this section, we separately present some proofs on the elementary facts for the sequences needed in the proof of our main result.
\subsection{Tools for the Lipschitz estimate}
In what follows, we denote as usual
\[
2^*=\frac{2\,N}{N-2},\qquad \mbox{ for }N\ge 3.
\]
\begin{lm}
\label{lm:madonne}
Let $2\le p_1\le p_N$. We define
\[
j_0=\min\bigg\{j\in\mathbb{N}\,:\, j>\log_2\bigg(\frac{N-2}{2}\,(p_N-2)-\frac{N}{2}\,(p_1-2)\bigg)-2\bigg\},
\]
\[
j_1=\min\bigg\{j\in\mathbb{N}\,:\, j>\log_2\bigg((N-2)\,(p_N-2)-N\,\left(p_1-2\right)\bigg)-2\bigg\}.
\]
and $J=1+\max\{j_0,j_1\}$.
We set
\[
\gamma_j=2^{j+2}+p_N-2,
\]
and
\[
\tau_j=\frac{\gamma_{j-1}}{\gamma_j}\,\frac{\dfrac{2^*}{2}\,(\gamma_j+p_1-p_N)-\gamma_j}{\dfrac{2^*}{2}\,(\gamma_j+p_1-p_N)-\gamma_{j-1}}.
\]
Then there exist two constants $0<C_1<C_2<1$ depending on $N, p_1$ and $p_N$ such that
\[
C_1\le\frac{(1-\tau_j)\,\gamma_j}{\gamma_j+p_1-p_N}\le C_2,\qquad \mbox{ for every } j\ge J.
\]
\end{lm}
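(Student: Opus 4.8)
The plan is to reduce the ratio $\dfrac{(1-\tau_j)\,\gamma_j}{\gamma_j+p_1-p_N}$ to an explicit rational function of $2^{j+1}$ and then read off the two‑sided bound by an elementary monotonicity argument.

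First I would simplify $1-\tau_j$. Setting $a_j:=\dfrac{2^*}{2}\,(\gamma_j+p_1-p_N)$, a direct manipulation of the definition of $\tau_j$ gives
\[
1-\tau_j=\frac{\gamma_j\,(a_j-\gamma_{j-1})-\gamma_{j-1}\,(a_j-\gamma_j)}{\gamma_j\,(a_j-\gamma_{j-1})}=\frac{a_j\,(\gamma_j-\gamma_{j-1})}{\gamma_j\,(a_j-\gamma_{j-1})},
\]
so that, dividing by $\gamma_j+p_1-p_N$ and using $a_j=\dfrac{2^*}{2}\,(\gamma_j+p_1-p_N)$ once more,
\[
\frac{(1-\tau_j)\,\gamma_j}{\gamma_j+p_1-p_N}=\frac{\dfrac{2^*}{2}\,(\gamma_j-\gamma_{j-1})}{a_j-\gamma_{j-1}} .
\]
Since $\gamma_j-\gamma_{j-1}=2^{j+1}$ and $\gamma_j+p_1-p_N=2^{j+2}+p_1-2$, I would then put $t:=2^{j+1}$, multiply numerator and denominator by $N-2$, and recall $\tfrac{2^*}{2}=\tfrac{N}{N-2}$, to arrive at the key identity
\[
\frac{(1-\tau_j)\,\gamma_j}{\gamma_j+p_1-p_N}=\frac{N\,t}{(N+2)\,t-K},\qquad K:=(N-2)\,(p_N-2)-N\,(p_1-2) .
\]

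The crucial point is that $K$ is exactly the argument of the logarithm in the definition of $j_1$. Hence for $j\ge j_1$ (and therefore for all $j\ge J$, since $J\ge j_1+1$) one has $2^{j+2}>K$, i.e.\ $2\,t>K$, whence
\[
(N+2)\,t-K>(N+2)\,t-2\,t=N\,t>0 .
\]
This keeps the denominator positive and, when $K>0$, gives directly
\[
\frac{N}{N+2}=\frac{N\,t}{(N+2)\,t}<\frac{N\,t}{(N+2)\,t-K}<\frac{N\,t}{N\,t}=1 ,
\]
while for $K\le 0$ the quantity already lies in $\bigl(0,\tfrac{N}{N+2}\bigr]$ for every $t>0$.

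To upgrade this to bounds that are uniform in $j\ge J$, I would observe that $t\mapsto \dfrac{N\,t}{(N+2)\,t-K}$ is continuous and monotone on $[\,2^{J+1},+\infty)$ (decreasing if $K>0$, increasing if $K<0$, constant if $K=0$), with limit $\tfrac{N}{N+2}\in(0,1)$ as $t\to+\infty$ and value strictly inside $(0,1)$ at the left endpoint $t=2^{J+1}$ (by the computation above, since $2^{J+2}>K$). Consequently, for $j\ge J$ this quantity stays in a fixed compact interval $[\ell,L]\subset(0,1)$, where $\ell$ and $L$ are the smaller and the larger of $\tfrac{N}{N+2}$ and the value at $j=J$; one may then take, e.g., $C_1:=\ell/2$ and $C_2:=(L+1)/2$, which satisfy $0<C_1<C_2<1$ and depend only on $N,p_1,p_N$. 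The only genuinely delicate point is arranging that $J$ be large enough for the denominator $(N+2)\,t-K$ to be positive, and this is precisely the role of the index $j_1$; the index $j_0$, also absorbed into $J$, is not needed for this lemma but guarantees the strict inequality $\gamma_j<\tfrac{2^*}{2}\,(\gamma_j+p_1-p_N)$, which legitimates the interpolation step in the proof of Proposition~\ref{prop:a_priori_estimate} (and makes $\tau_j$ lie in $(0,1)$).
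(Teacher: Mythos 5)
Your proof is correct and follows essentially the same route as the paper's: simplify the ratio to an explicit rational expression in $2^{j+1}$, observe that it tends to $\tfrac{N}{N+2}=\tfrac12\cdot\tfrac{2^*}{2^*-1}\in(0,1)$ and is monotone on the relevant range once $j\ge j_1$, and deduce the uniform two-sided bound. The only (cosmetic) difference is that you clear the $(N-2)$ denominator from the outset and package the constant as $K=(N-2)(p_N-2)-N(p_1-2)$, arriving at the tidy identity $\tfrac{(1-\tau_j)\gamma_j}{\gamma_j+p_1-p_N}=\tfrac{Nt}{(N+2)t-K}$ with $t=2^{j+1}$, whereas the paper carries the $2^*$'s through to the end; your observation that $j_0$ is not needed for this particular lemma (it guarantees $\tau_j\in(0,1)$ for the interpolation step) is also accurate.
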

\begin{proof}
It is easily seen that the sequence $\{\gamma_j\}_{j\in\mathbb{N}}$ is increasing. Moreover, by definition of $j_0$, we have that 
\[
\gamma_j<\frac{2^*}{2}\,(\gamma_j+p_1-p_N),\qquad \mbox{ for every }j\ge j_0,
\]
thus $\tau_j$ is well-defined and positive for $j\ge j_0$. The definition of $\tau_j$ entails that
\begin{equation}
\label{interpola}
\frac{1}{\gamma_j}=\frac{\tau_j}{\gamma_{j-1}}+\frac{1-\tau_j}{\dfrac{2^*}{2}\,(\gamma_j+p_1-p_N)}.
\end{equation}
thus the previous discussion mplies that 
\[
0<\tau_j<1,\qquad \mbox{ for every } j\ge j_0.
\]
Then the proof is by direct computation: we have
\[
\begin{split}
\frac{(1-\tau_j)\,\gamma_j}{\gamma_j+p_1-p_N}&=\left(1-\frac{\gamma_{j-1}}{\gamma_j}\,\frac{\dfrac{2^*}{2}\,(\gamma_j+p_1-p_N)-\gamma_j}{\dfrac{2^*}{2}\,(\gamma_j+p_1-p_N)-\gamma_{j-1}}\right)\,\frac{\gamma_j}{\gamma_j+p_1-p_N}\\
&=\frac{\gamma_j\,\dfrac{2^*}{2}\,(\gamma_j+p_1-p_N)-\gamma_{j-1}\,\dfrac{2^*}{2}\,(\gamma_j+p_1-p_N)}{\dfrac{2^*}{2}\,(\gamma_j+p_1-p_N)-\gamma_{j-1}}\,\frac{1}{\gamma_j+p_1-p_N}\\
&=\frac{2^*}{2}\,\frac{\gamma_j-\gamma_{j-1}}{\dfrac{2^*}{2}\,(\gamma_j+p_1-p_N)-\gamma_{j-1}}\\
&=\frac{2^*}{2}\,\frac{2^{j+2}-2^{j+1}}{\dfrac{2^*}{2}\,(2^{j+2}+p_1-2)-p_N-2^{j+1}+2}\\
&=\frac{2^*}{4}\,\frac{2^{j+2}}{\left(\dfrac{2^*-1}{2}\right)\,2^{j+2}+\dfrac{2^*}{2}\,(p_1-2)-(p_N-2)}.
\end{split}
\]
We have to distinguish two cases: if 
\[
p_N<\frac{N}{N-2}\,(p_1-2)+2,
\]
then the function
\begin{equation}
\label{funzioncina}
t\mapsto \frac{t}{\left(\dfrac{2^*-1}{2}\right)\,t+\dfrac{2^*}{2}\,(p_1-2)-(p_N-2)},
\end{equation}
is well-defined for every $t>0$ and monotonically {\it increasing}. We have in this case
\[
\begin{split}
0<\frac{(1-\tau_{j_0})\,\gamma_{j_0}}{\gamma_{j_0}+p_1-p_N}&\le \frac{(1-\tau_j)\,\gamma_j}{\gamma_j+p_1-p_N}\\&<\lim_{j\to\infty} \frac{2^*}{4}\,\frac{2^{j+2}}{\left(\dfrac{2^*-1}{2}\right)\,2^{j+2}+\dfrac{2^*}{2}\,(p_1-2)-(p_N-2)}=\frac{1}{2}\,\frac{2^*}{2^*-1}<1,
\end{split}
\]
for every $j\ge j_0$.
\par
On the other hand, if 
\begin{equation}\label{eq1525}
p_N\ge \frac{N}{N-2}\,(p_1-2)+2,
\end{equation}
then the function \eqref{funzioncina} is well-defined and monotonically {\it decreasing} for 
\begin{equation}\label{eq1529}
t>\frac{2}{2^*-1}\,\left((p_N-2)-\dfrac{2^*}{2}\,(p_1-2)\right).
\end{equation}
Thus we now obtain
\[
\frac{1}{2}\,\frac{2^*}{2^*-1}\le \frac{(1-\tau_j)\,\gamma_j}{\gamma_j+p_1-p_N}\le \frac{2^*}{4}\,\frac{2^{j_1+2}}{\left(\dfrac{2^*-1}{2}\right)\,2^{j_1+2}+\dfrac{2^*}{2}\,(p_1-2)-(p_N-2)},
\]
for every $j\ge j_1$. Observe that the choice of $j_1$ assures firstly that \(t=2^{j+2}\) satisfies \eqref{eq1529} whenever \(j\geq j_1\) (here we  use \eqref{eq1525}) and secondly, that the right-hand side above is strictly smaller than $1$.
\end{proof}

\begin{lm}
\label{lm:putain}
With the notation of Lemma \ref{lm:madonne}, we define the sequence \(\{\varepsilon_j\}_{j\geq j_0}\) by
\begin{equation}
\label{madonnina}
1+\varepsilon_j=\tau_j\,\left(\frac{\gamma_j+p_1-p_N}{(1-\tau_j)\,\gamma_j}\right)',\qquad \mbox{ for } j\ge j_0.
\end{equation}
Then 
\[
\varepsilon_j\sim \frac{N}{4}\,\frac{p_N-p_1}{2}\,\frac{1}{2^j},\qquad \mbox{ for } j\to \infty.
\]
In particular, we have
\[
\lim_{n\to\infty} \prod_{i=j_0}^n (1+\varepsilon_j)<+\infty.
\]
\end{lm}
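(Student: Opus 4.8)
The plan is to first rewrite $1+\varepsilon_j$ in a transparent closed form and then read off its behaviour as $j\to\infty$. Set
\[
x_j:=\frac{(1-\tau_j)\,\gamma_j}{\gamma_j+p_1-p_N},
\]
so that the conjugate exponent appearing in \eqref{madonnina} is $(x_j^{-1})'=(1-x_j)^{-1}$, whence $1+\varepsilon_j=\tau_j/(1-x_j)$. Since $\gamma_j+p_1-p_N=2^{j+2}+p_1-2>0$, one has $1-x_j=\dfrac{\tau_j\,\gamma_j+p_1-p_N}{\gamma_j+p_1-p_N}$, and an elementary simplification yields the identity
\[
\varepsilon_j=\frac{(p_N-p_1)\,(1-\tau_j)}{\tau_j\,\gamma_j+p_1-p_N},\qquad j\ge j_0 .
\]
This already makes $\varepsilon_j\ge 0$ (recall $0<\tau_j<1$ and $p_N\ge p_1$, the denominator being positive for $j\ge J$ by Lemma~\ref{lm:madonne}), and shows that $\varepsilon_j\equiv 0$ in the standard growth case $p_1=p_N$.

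It then remains to determine the limit of $\tau_j$, which is already contained in the proof of Lemma~\ref{lm:madonne}: there one computes
\[
x_j=\frac{2^*}{4}\,\frac{2^{j+2}}{\dfrac{2^*-1}{2}\,2^{j+2}+\dfrac{2^*}{2}\,(p_1-2)-(p_N-2)}\,\longrightarrow\,\frac{2^*}{2\,(2^*-1)}=\frac{N}{N+2}\qquad\text{as }j\to\infty .
\]
Since $\gamma_j/(\gamma_j+p_1-p_N)\to 1$, this forces $1-\tau_j\to \dfrac{N}{N+2}$, hence $\tau_j\to\dfrac{2}{N+2}$ and $\tau_j\,\gamma_j\sim\dfrac{2}{N+2}\,2^{j+2}$. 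Plugging these asymptotics into the closed form for $\varepsilon_j$ gives
\[
\varepsilon_j\sim\frac{(p_N-p_1)\,\dfrac{N}{N+2}}{\dfrac{2}{N+2}\,2^{j+2}}=\frac{N\,(p_N-p_1)}{2^{j+3}}=\frac{N}{4}\,\frac{p_N-p_1}{2}\,\frac{1}{2^j},
\]
which is the asserted equivalence.

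From $0\le\varepsilon_j=O(2^{-j})$ we get $\sum_{j\ge j_0}\varepsilon_j<+\infty$; since each factor $1+\varepsilon_j$ is finite and positive, the infinite product $\prod_{j\ge j_0}(1+\varepsilon_j)$ then converges to a finite limit. There is no genuinely hard step in this argument: the only points requiring a little care are the elementary manipulation leading to the closed form for $\varepsilon_j$, and the positivity of the denominator $\tau_j\,\gamma_j+p_1-p_N$ --- equivalently $x_j<1$ --- for the indices entering the product, which is precisely the content of Lemma~\ref{lm:madonne} for $j\ge J$, the finitely many remaining factors being harmless.
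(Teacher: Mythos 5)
Your proof is correct and follows essentially the same route as the paper: both derive the closed form $\varepsilon_j=\dfrac{(p_N-p_1)(1-\tau_j)}{\tau_j\,\gamma_j+p_1-p_N}$ (the paper writes the same identity as $\varepsilon_j=\frac{p_N-p_1}{\gamma_j}\cdot\frac{1-\tau_j}{\tau_j-(p_N-p_1)/\gamma_j}$), then read off the asymptotics from $\tau_j\to\frac{2}{N+2}$ (equivalently $\frac{1-\tau_j}{\tau_j}\to\frac{N}{2}$) and $\gamma_j\sim 2^{j+2}$, and conclude convergence of the product from summability of $\varepsilon_j$. The only cosmetic differences are that you pass through the intermediate quantity $x_j$ already computed in Lemma~\ref{lm:madonne} and that you note the positivity/vanishing of $\varepsilon_j$ explicitly, which the paper leaves implicit.
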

\begin{proof}
We start by computing explicitly the conjugate exponent appearing in \eqref{madonnina}. We have
\[
\begin{split}
\left(\frac{\gamma_j+p_1-p_N}{(1-\tau_j)\,\gamma_j}\right)'&=\left(\frac{1}{1-\tau_j}\,\left(1+\frac{p_1-p_N}{\gamma_j}\right)\right)'\\
&=\frac{\dfrac{1}{1-\tau_j}\,\left(1+\dfrac{p_1-p_N}{\gamma_j}\right)}{\dfrac{1}{1-\tau_j}\,\left(1+\dfrac{p_1-p_N}{\gamma_j}\right)-1}=\frac{1-\dfrac{p_N-p_1}{\gamma_j}}{\tau_j-\dfrac{p_N-p_1}{\gamma_j}}.
\end{split}
\]
Thus we have
\[
\varepsilon_j=\tau_j\,\left(\frac{\gamma_j+p_1-p_N}{(1-\tau_j)\,\gamma_j}\right)'-1=\frac{\tau_j\,\left(1-\dfrac{p_N-p_1}{\gamma_j}\right)}{\tau_j-\dfrac{p_N-p_1}{\gamma_j}}-1=\frac{p_N-p_1}{\gamma_j}\,\frac{1-\tau_j}{\tau_j-\dfrac{p_N-p_1}{\gamma_j}}.
\]
We now observe that 
\[
\frac{p_N-p_1}{\gamma_j}\,\frac{1-\tau_j}{\tau_j-\dfrac{p_N-p_1}{\gamma_j}}\sim \frac{p_N-p_1}{\gamma_j}\,\frac{1-\tau_j}{\tau_j},\qquad \mbox{ for } j\to \infty.
\]
Moreover, by using the definitions of $\gamma_j$ and $\tau_j$, we have
\[
\frac{1-\tau_j}{\tau_j}=\frac{1}{\tau_j}-1\sim \frac{N}{2},\qquad \mbox{ for } j\to\infty,
\]
which implies that
\[
\varepsilon_j\sim \frac{N}{2}\,\frac{p_N-p_1}{\gamma_j},\qquad \mbox{ for } j\to\infty.
\]
By observing that $\gamma_j\sim 2^{j+2}$, we get the desired conclusion.
\par
In order to prove the last part, it is enough to notice that
\[
\prod_{j=j_0}^n (1+\varepsilon_j)=\exp\left(\sum_{j=j_0}^n\log (1+\varepsilon_j)\right)\qquad \mbox{ and }\qquad \log (1+\varepsilon_j)\sim \varepsilon_j,\qquad \mbox{ for } j\to\infty.
\]
By using the first part of the proof and the definition of $\gamma_j$, we see that
\[
\lim_{n\to\infty}\sum_{j=j_0}^n \varepsilon_j<+\infty.
\]
This concludes the proof.
\end{proof}

\subsection{Tools for the higher integrability}

In this section, we present some properties of the vector-valued sequence $\{(\beta^{\ell}_{j-1},\dots,\beta^{\ell}_N)\}$ which were needed in the proof of Proposition \ref{prop:pierre}, in order to complete the inductive step.
We use the same notation as before: in particular, we fix \(2\leq q_0<+\infty\) and set
\[
q_j=\min \left\{\left(\frac{p_j}{2}\right)', q_0\right\},\qquad j=1,\dots,N.
\]
Then for a fixed index $j\in\{2,\dots,N\}$, we define
\begin{equation}
\label{boh_appendix}
\left\{\begin{array}{ccl}
\beta_{j-1}^{(0)}&=&p_{j-1} \min \left\{q_{j-2},\, q_{j-1}\,q_N \right\},\\
&&\\
\beta_{i}^{(0)}&=& p_i\, q_{j-1},\qquad \mbox{ for } i=j,\dots,N,\\
\end{array}
\right.
\end{equation}
and by a recursive scheme
\begin{equation}\label{eq-def-betai}
\left\{\begin{array}{ccl}
\beta_N^{(\ell+1)}&=&p_N\,\displaystyle\min\left\{q_{j-2},\, \min_{j-1\le k\le N-1}\frac{\beta^{(\ell)}_k}{p_k-2}\right\}\\
\beta_{N-1}^{(\ell+1)}&=&p_{N-1}\,\displaystyle\min\left\{q_{j-2},\, \min_{j-1\le k\le N-2}\frac{\beta^{(\ell)}_k}{p_k-2},\, \frac{\beta^{(\ell+1)}_N}{p_N-2}\right\}\\
\beta_{N-2}^{(\ell+1)}&=&p_{N-2}\,\displaystyle\min\left\{q_{j-2},\, \min_{j-1\le k\le N-3}\frac{\beta^{(\ell)}_k}{p_k-2},\, \min_{N-1\le k\le N}\frac{\beta^{(\ell+1)}_k}{p_k-2}\right\}\\
\vdots & = &\vdots\\
\beta_{j-1}^{(\ell+1)}&=&p_{j-1}\,\displaystyle\min\left\{q_{j-2},\, \min_{j\le k\le N}\frac{\beta^{(\ell+1)}_k}{p_k-2}\right\}
\end{array}
\right.
\end{equation}

\begin{lm}
\label{lm-beta-increasing}
For every \(i\in \{j-1, \dots, N\}\), the sequence \(\{\beta_{i}^{(\ell)}\}_{\ell\in \mathbb{N}}\) is nondecreasing.
\end{lm}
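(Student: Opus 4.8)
The plan is to prove the assertion by induction on $\ell$, establishing $\beta_i^{(\ell)}\le\beta_i^{(\ell+1)}$ for every $i\in\{j-1,\dots,N\}$ and every $\ell\in\mathbb{N}$. The structural point that makes the induction go through is that, by \eqref{eq-def-betai}, each $\beta_i^{(\ell+1)}$ is a \emph{minimum} -- hence a nondecreasing function of each of its arguments -- taken over $q_{j-2}$, over the quantities $\beta_k^{(\ell)}/(p_k-2)$ for the indices $k<i$, and over the already-updated quantities $\beta_k^{(\ell+1)}/(p_k-2)$ for the indices $k>i$. Thus, once one knows that the whole vector did not decrease from level $\ell-1$ to level $\ell$ and that the components with index $>i$ did not decrease from level $\ell$ to level $\ell+1$, the same follows for the $i$-th component at level $\ell+1$. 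This forces a nested \emph{downward} induction on the component index $i$ (from $N$ down to $j-1$) inside the induction on $\ell$.

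For the inductive step I would fix $\ell\ge 1$, assume $\beta_k^{(\ell-1)}\le\beta_k^{(\ell)}$ for all $k$, and prove $\beta_i^{(\ell)}\le\beta_i^{(\ell+1)}$ by downward induction on $i$. When $i=N$, the first line of \eqref{eq-def-betai} exhibits $\beta_N^{(\ell+1)}$ and $\beta_N^{(\ell)}$ as the same minimum applied, respectively, to $\beta_k^{(\ell)}/(p_k-2)$ and $\beta_k^{(\ell-1)}/(p_k-2)$ for $j-1\le k\le N-1$ (and to $q_{j-2}$); monotonicity of the minimum and the hypothesis on level $\ell$ give $\beta_N^{(\ell)}\le\beta_N^{(\ell+1)}$. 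When $i<N$, the relevant line of \eqref{eq-def-betai} additionally involves $\beta_k^{(\ell+1)}/(p_k-2)$ for $i+1\le k\le N$, which have not decreased by the inner downward induction hypothesis, while the terms $\beta_k^{(\ell)}/(p_k-2)$ with $k<i$ have not decreased by the hypothesis on level $\ell$; hence $\beta_i^{(\ell)}\le\beta_i^{(\ell+1)}$. This closes the step. (When $p_k=2$ the quantity $\beta_k^{(\cdot)}/(p_k-2)$ is $+\infty$ by the convention adopted in the proof of Proposition \ref{prop:pierre} and simply drops out of the minimum, so the comparisons are unaffected.)

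It remains to settle the base case $\ell=0$, namely $\beta_i^{(0)}\le\beta_i^{(1)}$, which is the only computational point and which I regard as the main obstacle: it relies on the explicit formula \eqref{boh_appendix} and on the elementary facts that $q_0\ge q_1\ge\dots\ge q_N$, that $q_i\ge 1$ for every $i$, and that $q_i\le p_i/(p_i-2)$ (with $p_i/(p_i-2)=+\infty$ when $p_i=2$). Again I would argue by downward induction on $i$. For $i\ge j$ one has $\beta_i^{(0)}=p_i\,q_{j-1}$, so it suffices to check that every argument of the minimum defining $\beta_i^{(1)}$ is $\ge q_{j-1}$: this holds for $q_{j-2}$ since $q_{j-2}\ge q_{j-1}$; for $\beta_k^{(0)}/(p_k-2)=q_{j-1}\,p_k/(p_k-2)$ with $j\le k\le i-1$ since $p_k/(p_k-2)\ge 1$; for $\beta_{j-1}^{(0)}/(p_{j-1}-2)=\bigl(p_{j-1}/(p_{j-1}-2)\bigr)\,\min\{q_{j-2},q_{j-1}q_N\}$ since $p_{j-1}/(p_{j-1}-2)\ge q_{j-1}$ and $\min\{q_{j-2},q_{j-1}q_N\}\ge 1$; and for $\beta_k^{(1)}/(p_k-2)$ with $k>i$ since by the downward induction $\beta_k^{(1)}\ge\beta_k^{(0)}=p_k\,q_{j-1}$. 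Hence $\beta_i^{(1)}\ge p_i\,q_{j-1}=\beta_i^{(0)}$. Finally, for $i=j-1$ one has $\beta_{j-1}^{(0)}=p_{j-1}\min\{q_{j-2},q_{j-1}q_N\}$, and comparing with the last line of \eqref{eq-def-betai} it is enough to verify that $\beta_k^{(1)}/(p_k-2)\ge q_{j-1}q_N$ for $j\le k\le N$. This follows from $\beta_k^{(1)}\ge\beta_k^{(0)}=p_k\,q_{j-1}$ (just established) together with $p_k/(p_k-2)\ge q_N$, which holds because $t\mapsto t/(t-2)$ is nonincreasing on $(2,+\infty)$, $p_k\le p_N$, and $q_N\le p_N/(p_N-2)$. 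This completes the base case and hence the lemma; the delicate ingredient, as anticipated, is that the $(j-1)$-st entry of the initial vector is not a plain multiple of $q_{j-1}$, so one needs here the sharp comparison $p_k/(p_k-2)\ge q_N$ rather than the crude $p_k/(p_k-2)\ge 1$ that suffices for the other entries.
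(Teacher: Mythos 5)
Your proof is correct and follows essentially the same approach as the paper: an outer induction on $\ell$ with a nested downward induction on the component index $i$, with the only computational work being the base case $\beta_i^{(0)}\le\beta_i^{(1)}$. The paper organizes the base case slightly differently—it establishes the single uniform bound $\beta_i^{(1)}\ge p_i\min\{q_{j-2},\,q_{j-1}q_N\}$ for every $i$ and reads off both cases $i\ge j$ and $i=j-1$ from it—whereas you first prove the weaker $\beta_i^{(1)}\ge p_iq_{j-1}$ for $i\ge j$ and then invoke the sharper monotonicity $p_k/(p_k-2)\ge p_N/(p_N-2)\ge q_N$ to settle $i=j-1$; both are valid and the difference is purely cosmetic.
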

\begin{proof}
We proceed again by induction on \(\ell\).
\vskip.2cm\noindent
{\bf Initialization step}. We first need to prove that
\begin{equation}
\label{step0}
\beta_{i}^{(1)}\geq \beta_{i}^{(0)},\qquad \mbox{ for }  i=j-1,\dots,N.
\end{equation}
We establish this by downward induction on \(i=N, \dots, j-1\). Indeed, for \(i=N\), one has by definition
\[
\beta_{N}^{(1)} = p_N\min \left\{q_{j-2},\, \min_{j-1\leq k\le N-1} \frac{\beta_{k}^{(0)}}{p_k-2}\right\}. 
\]
By using the definition of $\beta^{(0)}_k$, the previous is the same as
\[
\begin{split}
\beta_{N}^{(1)} 
&=p_N\,\min \Big\{q_{j-2},\,q_{j-1}\,\min \{q_{j-2},\, q_N \,q_{j-1}\},\,q_{j-1}\,\min_{j\le k\le N-1} q_k\Big\}. 
\end{split}
\]
By recalling that $q_{j-2}\geq \dots \geq q_N$, this gives
\[
\beta_{N}^{(1)} = p_N\,\min \Big\{q_{j-2},\,q_{j-1}\,\min\{q_{j-2},\, q_N\, q_{j-1}\},\,q_{j-1}\,q_{N-1}\Big\}  \geq  p_N\, \min \{q_{j-2},\,q_{j-1}\,q_N\} . 
\]
Hence, 
\[
\beta_{N}^{(0)}=p_N\,q_{j-1} \leq p_N \min\{ q_{j-2},\, q_{j-1}\,q_N\} = \beta_{N}^{(1)}.
\] 
This proves \eqref{step0} for \(i=N\). 
\par
We now assume that for some \(i\in \{j-1, \dots, N\}\), property \eqref{step0} holds for every \(k\in \{i+1, \dots, N\}\). One proceeds to prove that \eqref{step0} holds for $i$, as well. By definition of \(\beta_{i}^{(1)}\), 
\[
\beta_{i}^{(1)}=p_i\, \min \left\{q_{j-2},\,\min_{j-1\leq k\le i-1} \frac{\beta_{k}^{(0)}}{p_k-2},\, \min_{i+1\le k\le N} \frac{\beta_{k}^{(1)}}{p_k-2} \right\}.
\]
By the induction assumption, \(\beta_{k}^{(1)}\geq \beta_{k}^{(0)}\) for \(k\geq i+1\) and thus
\begin{align*}
\beta_{i}^{(1)}&\geq p_i\, \min \left\{q_{j-2},\,\min_{j-1\leq k\le i-1} \frac{\beta_{k}^{(0)}}{p_k-2} ,\, \min_{i+1\le k\le N} \frac{\beta_{k}^{(0)}}{p_k-2} \right\}\\
&=p_i\,\min \left\{q_{j-2},\, \frac{\beta_{j-1}^{(0)}}{p_{j-1}-2},\, \min_{\substack{j\le k\le N,\\ k\not=i}} \frac{\beta_{k}^{(0)}}{p_k-2}\right\}.
\end{align*}
By definition of \(\beta_{i}^{(0)} \), this gives
\[
\begin{split}
\beta_{i}^{(1)}&\geq p_i\,\min \Big\{q_{j-2},\, q_{j-1}\,\min\{q_{j-2},\, q_N\, q_{j-1}\} ,\, q_{j-1}\,\min_{\substack{j\le k\le N,\\ k\not=i}} q_k\Big\}\\
& = p_i\,\min \{q_{j-2},\, q_{j-1}\,q_N\}.
\end{split}
\]
When \(i\ge j\), this implies \(\beta_{i}^{(1)}\geq p_i\,q_{j-1}=\beta_{i}^{(0)}\), while when \(i=j-1\), one has
\[
\beta_{j-1}^{(1)}\geq p_{j-1}\,\min\{q_{j-2},\, q_{N}\,q_{j-1}\}=\beta_{j-1}^{(0)}.
\]
We have thus proved \eqref{step0} for \(i\), which completes the proof.
\vskip.2cm\noindent
{\bf Inductive step}. We now assume that for an index  \(\ell\geq 1\), we have
\begin{equation}
\label{17072018}
\beta^{(\ell)}_i\ge \beta^{(\ell-1)}_i,\qquad \mbox{ for } i=j-1,\dots,N. 
\end{equation}
We need to prove that this entails
\[
\beta^{(\ell+1)}_i\ge \beta^{(\ell)}_i,\qquad \mbox{ for } i=j-1,\dots,N,
\]
as well. 
\par
We rely  again on a downward induction on \(i=N, \dots, j-1\). Indeed, for \(i=N\), we can use \eqref{17072018}, which gives
\[
\beta_{N}^{(\ell+1)}=p_N \min\left\{q_{j-2},\,\min_{j-1\leq k\le N-1} \frac{\beta_{k}^{(\ell)}}{p_k-2}\right\} \geq p_N \min\left\{ q_{j-2},\,\min_{j-1\leq k\le N-1} \frac{\beta_{k}^{(\ell-1)}}{p_k-2}\right\}=\beta_{N}^{(\ell)}.
\]
We now assume that for some \(i\geq j-1\), one has 
\[
\beta_{k}^{(\ell+1)}\geq \beta_{k}^{(\ell)},\qquad \mbox{ for every } k\in \{i+1, \dots, N\}.
\] 
Then
\[
\begin{split}
\beta_{i}^{(\ell+1)}&=p_i\, \min \left\{q_{j-2},\,\min_{j-1\leq k\le i-1} \frac{\beta_{k}^{(\ell)}}{p_k-2},\, \min_{i+1\le k\le N} \frac{\beta_{k}^{(\ell+1)}}{p_k-2} \right\}\\
& \geq p_i\, \min \left\{q_{j-2},\,\min_{j-1\leq k\le i-1} \frac{\beta_{k}^{(\ell)}}{p_k-2} ,\, \min_{i+1\le k\le N} \frac{\beta_{k}^{(\ell)}}{p_k-2} \right\}.
\end{split}
\]
Relying now on the induction assumption \eqref{17072018}, one gets
\[
\beta_{i}^{(\ell+1)} \geq p_i \min \left\{q_{j-2},\,\min_{j-1\leq k\le i-1} \frac{\beta_{k}^{(\ell-1)}}{p_k-2} , \min_{i+1\le k\le N} \frac{\beta_{k}^{(\ell)}}{p_k-2} \right\}=\beta_{i}^{(\ell)}.
\]
This completes the proof. 
\end{proof}

\begin{lm}
\label{lm:limitbetai}
With the notation of the previous lemma, there exists \(\ell_0\in \mathbb{N}\) such that for every \(\ell\geq \ell_0\), one has
\[
\beta_{i}^{(\ell)}=p_i\, q_{j-2},\qquad \mbox{ for every } i=j-1,\dots,N.
\]
\end{lm}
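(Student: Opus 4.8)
The plan is to show that the vector sequence $(\beta_{j-1}^{(\ell)},\dots,\beta_N^{(\ell)})$ stabilizes after finitely many steps at the value $(p_{j-1}q_{j-2},\dots,p_Nq_{j-2})$. By Lemma~\ref{lm-beta-increasing} each scalar sequence $\{\beta_i^{(\ell)}\}_\ell$ is nondecreasing; the first observation to record is that it is also bounded above by $p_iq_{j-2}$. This follows by a straightforward induction on $\ell$ (and an inner downward induction on $i$), since in each line of \eqref{eq-def-betai} the term $q_{j-2}$ appears inside the minimum, and for the other terms $\beta_k^{(\ell)}/(p_k-2)\le p_kq_{j-2}/(p_k-2)$; using $q_{j-2}\ge q_k\ge (p_k/2)'\wedge q_0$ together with $p_k q_{j-2}/(p_k-2)\ge q_{j-2}$ keeps the bound propagating. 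Hence each $\{\beta_i^{(\ell)}\}_\ell$ is a nondecreasing, bounded sequence of reals, so it converges to some limit $\beta_i^{(\infty)}\le p_iq_{j-2}$.

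\textbf{Identifying the limit.} Passing to the limit $\ell\to\infty$ in \eqref{eq-def-betai} (the right-hand sides are continuous in the finitely many entries involved), the limit vector $(\beta_{j-1}^{(\infty)},\dots,\beta_N^{(\infty)})$ is a fixed point of the system
\[
\beta_i^{(\infty)}=p_i\,\min\Bigl\{q_{j-2},\ \min_{j-1\le k\le N,\ k\ne i}\frac{\beta_k^{(\infty)}}{p_k-2}\Bigr\},\qquad i=j-1,\dots,N.
\]
I claim the only such fixed point with $\beta_i^{(\infty)}\ge\beta_i^{(0)}\ge p_i+2$ is $\beta_i^{(\infty)}=p_iq_{j-2}$ for all $i$. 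To see this, let $i^\ast$ be an index minimizing $\beta_i^{(\infty)}/p_i$. If $\beta_{i^\ast}^{(\infty)}/p_{i^\ast}<q_{j-2}$, then the minimum defining $\beta_{i^\ast}^{(\infty)}$ is attained at some $\beta_k^{(\infty)}/(p_k-2)$ with $k\ne i^\ast$, giving $\beta_{i^\ast}^{(\infty)}/p_{i^\ast}=\beta_k^{(\infty)}/(p_k-2)\ge \beta_k^{(\infty)}/p_k\ge \beta_{i^\ast}^{(\infty)}/p_{i^\ast}$, hence equality throughout; in particular $\beta_k^{(\infty)}/p_k=\beta_k^{(\infty)}/(p_k-2)$, forcing $\beta_k^{(\infty)}=0$, which contradicts $\beta_k^{(\infty)}\ge p_k+2>0$. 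Therefore $\beta_{i^\ast}^{(\infty)}/p_{i^\ast}=q_{j-2}$, i.e. $\beta_{i^\ast}^{(\infty)}=p_{i^\ast}q_{j-2}$ is maximal possible; since this is the minimum of the ratios, every ratio equals $q_{j-2}$, proving the claim.

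\textbf{From convergence to finite stabilization.} It remains to upgrade ``converges to $p_iq_{j-2}$'' to ``equals $p_iq_{j-2}$ for $\ell\ge\ell_0$''. The key point is that all quantities in play take values in a \emph{discrete} set: the $q_i$ are of the form $(p_i/2)'\wedge q_0$, and one checks by induction that every $\beta_i^{(\ell)}$ lies in the finite set of numbers obtainable from $\{q_0,(p_1/2)',\dots,(p_N/2)'\}$ by the finitely many min/ratio operations used up to that stage — more simply, each $\beta_i^{(\ell)}/p_i$ belongs to the finite set $S=\{q_{j-2}\}\cup\{q_s q_t : s,t\}\cup\{q_s/(p_t-2)\cdot(\text{products of }q)\}$, which is finite because it is generated by finitely many elements under a bounded number of the operations appearing in \eqref{eq-def-betai} before $q_{j-2}$ caps everything. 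A cleaner route: since $\beta_i^{(\ell)}$ is nondecreasing and bounded and takes values in a finite set (being a min of finitely many fixed rationals built from $p_i$'s and $q_0$), it must be eventually constant; being eventually constant and convergent to $p_iq_{j-2}$, it equals $p_iq_{j-2}$ from some $\ell_i$ on. Taking $\ell_0=\max_i\ell_i$ finishes the proof. \textbf{The main obstacle} is making the ``finite set of values'' argument rigorous: one must argue that the recursion \eqref{eq-def-betai} cannot produce infinitely many distinct values, which is where the monotonicity from Lemma~\ref{lm-beta-increasing} is essential — without it, a bounded sequence in a finite set could still fail to stabilize, but a \emph{monotone} sequence in a finite set is automatically eventually constant.
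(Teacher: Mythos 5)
Your proposal splits the argument into (i) showing the sequences $\{\beta_i^{(\ell)}\}_\ell$ are bounded above by $p_iq_{j-2}$ and hence converge, (ii) identifying the limit as $p_iq_{j-2}$ via a fixed-point argument, and (iii) upgrading convergence to finite stabilization by claiming the values lie in a finite set. Parts (i) and (ii) are correct (indeed the upper bound in (i) is immediate from the definition, no induction required, since $q_{j-2}$ sits inside the minimum). The fixed-point argument in (ii) is a nice observation that the paper does not use, but it only yields convergence, which is strictly weaker than what the lemma asserts and what Proposition~\ref{prop:pierre} needs.

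The genuine gap is in (iii). The assertion that ``each $\beta_i^{(\ell)}$ takes values in a finite set'' is not established; the parenthetical justification (``being a min of finitely many fixed rationals built from $p_i$'s and $q_0$'') is false as stated. The recursion \eqref{eq-def-betai} involves iterated products of the ratios $p_k/(p_k-2)$, and the set of numbers obtainable by repeatedly multiplying a finite collection of seeds by the factors $p_k/(p_k-2)$ is, a priori, a countably infinite semigroup, not a finite set. Your second attempt at this (``a bounded number of operations...before $q_{j-2}$ caps everything'') is exactly what needs proving, not a justification. To make the finite-set claim true one has to observe that every such factor satisfies $p_k/(p_k-2)\ge q_N>1$, so each application multiplies the relevant quantity by at least $q_N$; hence only finitely many of these products can remain below the cap $q_{j-2}$, and stabilization occurs in finitely many steps. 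But this geometric-growth observation is precisely the paper's argument: setting $\delta^{(\ell)}=\min_{j-1\le k\le N}\beta_k^{(\ell)}/p_k$, one derives $\delta^{(\ell+1)}\ge\min\{q_{j-2},\,q_N\,\delta^{(\ell)}\}$, and if $\delta^{(\ell)}$ stayed below $q_{j-2}/q_N$ forever it would grow like $q_N^\ell\to\infty$, a contradiction. Once $\delta^{(\ell)}\ge q_{j-2}$, all $\beta_i^{(\ell)}/p_i\ge q_{j-2}$, which combined with the immediate upper bound gives equality. In short: the paper's one-line geometric estimate $\delta^{(\ell+1)}\ge\min\{q_{j-2},q_N\delta^{(\ell)}\}$ is the missing ingredient that closes your proof, and without it the finite stabilization does not follow from convergence alone.
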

\begin{proof}
By using the monotonicity proved in the previous lemma, we get in particular for \(i=j-1,\dots,N\),
\begin{equation}
\label{limitozzo}
\begin{split}
\beta^{(\ell+1)}_i&=p_{i}\,\displaystyle\min\left\{q_{j-2},\, \min_{j-1\le k\le i-1}\frac{\beta^{(\ell)}_k}{p_k-2},\, \min_{i+1\le k\le N}\frac{\beta^{(\ell+1)}_k}{p_k-2}\right\}\\
&\ge p_{i}\,\displaystyle\min\left\{q_{j-2},\, \min_{\substack{j-1\le k\le N,\\ k\not=i}} \frac{\beta^{(\ell)}_k}{p_k-2}\right\}\ge p_{i}\,\displaystyle\min\left\{q_{j-2},\, \min_{j-1\le k\le N} \frac{\beta^{(\ell)}_k}{p_k-2}\right\}.
\end{split}
\end{equation}
Dividing by \(p_i\) and observing that 
\[
\frac{\beta^{(\ell)}_k}{p_k-2} \geq  \frac{\beta^{(\ell)}_k}{p_k}q_k\geq \frac{\beta^{(\ell)}_k}{p_k}\,q_N,
\] 
one deduces that
\[
\frac{\beta^{(\ell+1)}_i}{p_i} \geq  \displaystyle\min\left\{q_{j-2},\, q_N\,\min_{j-1\le k\le N} \frac{\beta^{(\ell)}_k}{p_k}\right\}.
\]
Since this is true for every \(i=j-1,\dots,N\), this implies 
\begin{equation}\label{eq1799}
\delta^{(\ell+1)} \geq \displaystyle\min\left\{q_{j-2},\, q_N\,\delta^{(\ell)}\right\},\qquad \mbox{ where } \delta^{(\ell)}=\min_{j-1\le k\le N}\frac{\beta^{(\ell)}_k}{p_k}.
\end{equation}
The monotonicity of each sequence \(\{\beta^{(\ell)}_k\}_{\ell\in\mathbb{N}}\) entails the monotonicity of \(\{\delta^{(\ell)}\}_{\ell\in\mathbb{N}}\). We claim that there exists \(\ell_0\in \mathbb{N}\) such that one has 
\begin{equation}\label{eqcl-1801}
q_N\, \delta^{(\ell)} \geq q_{j-2},\qquad \mbox{ for every }\ell\geq \ell_0.
\end{equation} 
Indeed, assume by contradiction that \(q_N\, \delta^{(\ell)} < q_{j-2}\) for every $\ell\ge 0$. Then it follows from \eqref{eq1799} that 
\[
\delta^{(\ell+1)} \geq q_N\,\delta^{(\ell)},\qquad \mbox{ for every } \ell\ge 0.
\]
which implies in turn that \(\delta^{(\ell)}\nearrow+\infty \), as $\ell\to \infty$. This contradicts that \(q_N \delta^{(\ell)} < q_{j-2}\) for every \(\ell\geq 0\). Hence, the claim \eqref{eqcl-1801} is established.
By \eqref{eq1799} again, this implies that 
\[
\frac{\beta_i^{(\ell+1)}}{p_i}\ge \min_{j-1\le k\le N}\frac{\beta^{(\ell+1)}_k}{p_k}=\delta^{(\ell+1)} \geq q_{j-2},\qquad  \mbox{ for } i=j-1,\dots,N,\mbox{ for every }\ell\ge \ell_0-1.
\]
Since the opposite estimate on $\beta_i^{(\ell+1)}/p_i$ is a consequence of the definition of \(\beta_{i}^{(\ell+1)}\), we obtain the desired conclusion.
\end{proof}


\begin{thebibliography}{100}

\bibitem{BDP} P. Baroni, A. Di Castro, G. Palatucci, Intrinsic geometry and De Giorgi classes for certain anisotropic problems, Discrete Contin. Dyn. Syst. Ser. S, {\bf 10} (2017), 647--659.

\bibitem{BB} P. Bousquet, L. Brasco, $C^1$ regularity of orthotropic $p-$harmonic functions in the plane,  Anal. PDE, {\bf 11} (2018), 813--854.

\bibitem{BBJ} P. Bousquet, L. Brasco, V. Julin, Lipschitz regularity for local minimizers of some widely degenerate problems, Ann. Sc. Norm. Super. Pisa Cl. Sci. (5), {\bf 26} (2016), 1--40.

\bibitem{BBLV} P. Bousquet, L. Brasco, C. Leone, A. Verde, On the Lipschitz character of orthotropic $p-$harmonic functions, Calc. Var. Partial Differential Equations, {\bf 57} (2018), 57:88

\bibitem{BC} L. Brasco, G. Carlier, On certain anisotropic elliptic equations arising in congested optimal transport: local gradient bounds, Adv. Calc. Var., {\bf 7} (2014), 379--407.

\bibitem{BLPV} L. Brasco, C. Leone, G. Pisante, A. Verde, Sobolev and Lipschitz regularity for local minimizers of widely degenerate anisotropic functionals, Nonlinear Anal., {\bf 153} (2017), 169--199. 

\bibitem{BFZ} M. Bildahuer, M. Fuchs, X. Zhong, A regularity theory for scalar local minimizers of splitting-type variational integrals, Ann. Sc. Norm. Super. Pisa Cl. Sci. (5), {\bf 6} (2007), 385--404. 

\bibitem{BoDuMa} V. B\"ogelein, F. Duzaar, P. Marcellini, Parabolic equations with $p,q-$growth,
J. Math. Pures Appl., {\bf 100} (2013), 535--563.

\bibitem{BDM} E. Bombieri, E. De Giorgi, M. Miranda, Una maggiorazione a priori relativa alle ipersuperfici minimali non parametriche, Arch. Rational Mech. Anal., {\bf 32} (1969), 255--267.

\bibitem{Ch} H. J. Choe, Interior behaviour of minimizers for certain functionals with nonstandard growth, 
Nonlinear Anal., {\bf 19} (1992), 933--945. 

\bibitem{CMM2} G. Cupini, P. Marcellini, E. Mascolo, Regularity of minimizers under limit growth conditions, Nonlinear  
Analysis, {\bf 153} (2017) 294--310.

\bibitem{CMM} G. Cupini, P. Marcellini, E. Mascolo, Local boundedness of minimizers with limit growth conditions, J. Optim. Theory Appl., {\bf 166} (2015), 1--22.

\bibitem{De2} F. Demengel, Regularity properties of viscosity solutions for fully nonlinear equations on the model of the anisotropic $p-$Laplacian, Asymptot. Anal., {\bf 105} (2017), 27--43.

\bibitem{De} F. Demengel, Lipschitz interior regularity for the viscosity and weak solutions of the pseudo $p-$Laplacian equation, Adv. Differential Equations, {\bf 21} (2016), 373--400.

\bibitem{DiB} E. DiBenedetto, $C^{1+\alpha}$ local regularity of weak solutions of degenerate elliptic equations, Nonlinear Anal., {\bf 7} (1983), 827--850. 

\bibitem{DGV} E. DiBenedetto, U. Gianazza, V. Vespri, Remarks on Local Boundedness and Local H\"older Continuity of Local Weak Solutions to Anisotropic $p-$Laplacian Type Equations, J. Elliptic Parabol. Equ., {\bf 2} (2016), 157--169.

\bibitem{ELM} L. Esposito, F. Leonetti, G. Mingione, Sharp regularity for functionals with $(p,q)$ growth, J. Differential Equations, {\bf 204} (2004), 5--55.

\bibitem{ELM1} L. Esposito, F. Leonetti, G. Mingione, Regularity for minimizers of functionals with $p-q$ growth, NoDEA Nonlinear Differential Equations Appl., {\bf 6} (1999), 133--148.

\bibitem{FS} N. Fusco, C. Sbordone, Some remarks on the regularity of minima of anisotropic integrals, Commun. Partial Differ. Equations, {\bf 18} (1993), 153--167.

\bibitem{FS2} N. Fusco, C. Sbordone, Local boundedness of minimizers in a limit case, Manuscripta Math., {\bf 69} (1990), 19--25. 

\bibitem{Gia} M. Giaquinta, Growth conditions and regularity, a counterexample, Manuscripta Math., {\bf 59} (1987), 245--248.

\bibitem{Gi} E. Giusti, Direct methods in the calculus of variations. World Scientific Publishing Co., Inc., River Edge, NJ, 2003.

\bibitem{Ho} Hong Min-Chun, Some remarks on the minimizers of variational integrals wtih non standard growth conditions, Boll. Un. Mat. Ital. A, {\bf 6} (1992), 91--101.

\bibitem{Ko} \={I}. M. Kolod\={\i}\u{\i}, The boundedness of generalized solutions of elliptic differential equations, Moscow Univ. Math. Bull., {\bf 25} (1970), 31--37. 

\bibitem{Kor} A. G. Korol\"ev, Boundedness of generalized solutions of elliptic differential equations, Russian Math. Surveys, {\bf 38} (1983), 186--187. 


\bibitem{Li} G. M. Lieberman, Gradient estimates for anisotropic elliptic equations, Adv. Differential Equations, {\bf 10} (2005), 767--812. 


\bibitem{LR} P. Lindqvist, D. Ricciotti, Regularity for an anisotropic equation in the plane, Nonlinear Analysis (2018), {\tt https://doi.org/10.1016/j.na.2018.02.002}

\bibitem{Ma91} P. Marcellini,  Regularity and existence of solutions of elliptic equations with $p,q-$growth conditions, J. Differential Equations, {\bf 90} (1991), 1--30.

\bibitem{Ma89} P. Marcellini, Regularity of minimizers of integrals of the Calculus of Variations under non standard growth conditions, Arch. Rational Mech. Anal., {\bf 105} (1989), 267--284. 

\bibitem{MaContro} P. Marcellini, Un exemple de solution discontinue d'un probl\`eme variationnel dans le cas scalaire, preprint n. 11 dell'Ist. Mat. Univ. Firenze (1987). Available at {\tt http://web.math.unifi.it/users/marcell/lavori}

\bibitem{MS} J. H. Michael, L. M. Simon, Sobolev and mean-value inequalities on generalized submanifolds of $\mathbb{R}^n$, 
Comm. Pure Appl. Math., {\bf 26} (1973), 361--379.

\bibitem{Si} L. Simon, Interior gradient bounds for non-uniformly elliptic equations, Indiana Univ. Math. J., {\bf 25} (1976), 821--855. 


\bibitem{St} G. Stampacchia, On some regular multiple integral problems in the calculus of variations, Comm. Pure Appl. Math., {\bf 16} (1963), 383--421.

\bibitem{UU} N. Uralt'seva, N. Urdaletova, The boundedness of the gradients of generalized solutions of degenerate quasilinear nonuniformly elliptic equations, Vest. Leningr. Univ. Math., {\bf 16} (1984), 263--270.


\end{thebibliography}
\end{document}